\newenvironment{primetheorem}[1]
  {\innerprimetheorem}
  {\endinnerprimetheorem}
\newenvironment{primecorollary}[1]
  {\innerprimecorollary}
  {\endinnerprimecorollary}
\newtheorem{theorem}{Theorem}[section]
\newtheorem{lemma}[theorem]{Lemma}
\newtheorem{proposition}[theorem]{Proposition}
\newtheorem{corollary}[theorem]{Corollary}
\newtheorem{claim}{Claim}
\newtheorem*{claim*}{Claim}
\newenvironment{claimproof}[1]{\par\noindent\textit{Proof of the claim:}\space#1}{\hfill $\blacksquare$\vspace{0.2cm}}
\newenvironment{claimproof*}[1]{\par\noindent\textit{Proof of the claim:}\space#1}{}
\newtheorem{maintheorem}{Theorem}
\newtheorem{maincorollary}[maintheorem]{Corollary}
\theoremstyle{definition}
\newtheorem{remark}[theorem]{Remark}
\newtheorem*{definition*}{Definition}
\newtheorem*{remark*}{Remark}
\newcommand{\suchthat}{\;\ifnum\currentgrouptype=16 \middle\fi|\;}
\DeclareMathOperator{\Aut}{\mathrm{Aut}}
\DeclareMathOperator{\PSL}{\mathrm{PSL}}
\DeclareMathOperator{\Sym}{\mathrm{Sym}}
\DeclareMathOperator{\Z}{\mathbf{Z}}
\newcommand{\HH}[1]{{}^{#1}\!H}
\newcommand{\hh}[1]{{}^{#1}\!h}
\newcommand{\PP}[1]{{}^{#1}\!\mathcal{P}}
\newcommand{\LL}[1]{{}^{#1}\!\mathcal{L}}
\newcommand{\II}[1]{{}^{#1}\!I}
\newcommand{\pipi}[1]{{}^{#1}\!\pi}
\newcommand{\PsiPsi}[1]{{}^{#1}\!\Psi}
\title{A homogeneous $\tilde{A}_2$-building with a non-discrete\\ automorphism group is Bruhat--Tits\footnotetext{2010 Mathematics Subject Classification: 51E24, 20E42, 51C05, 20F65.}}
\author{Nicolas Radu\thanks{F.R.S.-FNRS Research Fellow.}}
\affil{UCLouvain, 1348 Louvain-la-Neuve, Belgium}
\date{March 15, 2018}
\begin{document}


\maketitle


\begin{abstract}
Let $\Delta$ be a locally finite thick building of type $\tilde{A}_2$. We show that, if the type-preserving automorphism group $\Aut(\Delta)^+$ of $\Delta$ is transitive on panels of each type, then either $\Delta$ is Bruhat--Tits or $\Aut(\Delta)$ is discrete. For $\tilde{A}_2$-buildings which are not panel-transitive but only vertex-transitive, we give additional conditions under which the same conclusion holds. We also find a local condition under which an $\tilde{A}_2$-building is ensured to be exotic (i.e.\ not Bruhat--Tits). It can be used to show that the number of exotic $\tilde{A}_2$-buildings with thickness $q+1$ and admitting a panel-regular lattice grows super-exponentially with $q$ (ranging over prime powers). All those exotic $\tilde{A}_2$-buildings have a discrete automorphism group.
\end{abstract}


\tableofcontents

\section{Introduction}

A (locally finite) thick $\tilde{A}_2$-building $\Delta$ can be characterized as a simply connected simplicial complex of dimension~$2$ such that all simplicial spheres of radius~$1$ around vertices are isomorphic to the incidence graph of a (finite) projective plane. In this paper, $\Delta$ will always be a locally finite thick $\tilde{A}_2$-building and we will see $\Delta$ as a simplicial complex. The simplices of dimension~$2$ in $\Delta$ (i.e.\ triangles) are the \textbf{chambers} of $\Delta$, and those of dimension~$1$ (i.e.\ edges) are the \textbf{panels} of $\Delta$. Of course a \textbf{vertex} of $\Delta$ is a simplex of dimension~$0$. To each vertex of $\Delta$ we associate a \textbf{type} in $\{0,1,2\}$, so that all chambers have a vertex of each type. We also define the \textbf{type} of a panel in $\Delta$ as $\{a,b\}$ where $a, b \in \{0,1,2\}$ are the types of the two vertices of the panel. Thus each chamber has one panel of each type ($\{0,1\}$, $\{1,2\}$ and $\{0,2\}$). (Note that what we call \textit{type} here is generally called \textit{cotype} in the literature.)

In this paper, $\Aut(\Delta)$ denotes the full automorphism group of $\Delta$ (as a simplicial complex), while $\Aut(\Delta)^+$ is the subgroup of $\Aut(\Delta)$ consisting of the automorphisms which preserve the types. It is clear that $[\Aut(\Delta) : \Aut(\Delta)^+] \leq 6$, so that the locally compact group $\Aut(\Delta)$ (equipped with the topology of pointwise convergence) is non-discrete if and only if $\Aut(\Delta)^+$ is non-discrete.

The known sources of examples of $\tilde{A}_2$-buildings are the following:
\begin{enumerate}[(1)]
\item Following \cite{WeissAffine}, we say that $\Delta$ is \textbf{Bruhat--Tits} if its spherical building at infinity (which is a compact projective plane) is \textit{Moufang} (see \S\ref{section:preliminaries} for the definition of a Moufang projective plane). The only (locally finite and thick) Bruhat--Tits $\tilde{A}_2$-buildings are the ones associated to $\mathrm{PGL}(3,D)$ for $D$ a finite dimensional division algebra over a local field, see~\cite{WeissAffine}*{Chapter~28}. In particular, it follows that $\Aut(\Delta)$ is non-discrete when $\Delta$ is Bruhat--Tits. An $\tilde{A}_2$-building which is not Bruhat--Tits is called \textbf{exotic}.
\item One can construct $\tilde{A}_2$-buildings inductively, starting from a point $O$ and gluing triangles to the ball $B(O,r)$ of radius $r$ to obtain $B(O,r+1)$. This kind of construction is explained in~\cite{Ronan} and~\cite{Barre-Pichot}, where it was observed that $\tilde{A}_2$-buildings can be ``really" exotic. It is however rather hard to have any information on the automorphism group of a building constructed in that way.
\item $\tilde{A}_2$-buildings with lattices have been studied a lot: some of them with a panel-regular lattice (see~\cite{Essert}, \cite{Witzel} and \S\ref{section:Singer} below), others with a vertex-regular lattice (see~\cite{CMSZ} and~\cite{CMSZ2}), and also some with a lattice having two orbits of vertices (see~\cite{Barre}*{\S3} and~\cite{CapraceSixteen}*{Remark~8}). For the small examples, i.e.\ the ones with a small enough \textbf{thickness} (the number of chambers adjacent to a single panel), it could be checked with a computer that the automorphism group was discrete as soon as the building was exotic. Note that there exist exotic $\tilde{A}_2$-buildings with lattices and with arbitrarily large thickness, see~\cite{BCL}*{Appendix~D} or \cite{CapraceSixteen}*{Remark~8}.
\item Some exotic $\tilde{A}_2$-buildings can also be constructed from valuations on planar ternary rings, see~\cite{HVM}. The automorphism group of the $\tilde{A}_2$-buildings constructed in that way in~\cite{HVM2}*{\S7} is vertex-transitive and non-discrete, but it fixes a vertex at infinity, and is thus not unimodular by \cite{Caprace-Monod2}*{Theorem~M} (in particular, it cannot contain any lattice). This remark about the non-unimodularity will be important when stating Theorem~\ref{maintheorem:B} below.
\end{enumerate}

The goal of this paper is to provide sufficient conditions under which an exotic $\tilde{A}_2$-building has a discrete automorphism group. Our main result is the following.

\begin{maintheorem}\label{maintheorem:A}
Let $\Delta$ be a locally finite thick $\tilde{A}_2$-building and suppose that $\Aut(\Delta)^+$ is transitive on panels of each type. Then either:
\begin{enumerate}[(a)]
\item $\Delta$ is Bruhat--Tits; or
\item $\Aut(\Delta)$ is discrete.
\end{enumerate}
\end{maintheorem}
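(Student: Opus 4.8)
Since a Bruhat--Tits building always has non-discrete automorphism group, the two alternatives are mutually exclusive, and it suffices to prove: if $\Aut(\Delta)^+$ is transitive on panels of each type and $\Aut(\Delta)$ is non-discrete, then $\Delta$ is Bruhat--Tits. By definition, what must be shown is that the spherical building at infinity $\Delta^\infty$ --- a compact projective plane on which $\Aut(\Delta)^+$ acts faithfully, since an automorphism of $\Delta$ fixing every end fixes every apartment and is therefore trivial --- is Moufang, i.e.\ that for every root $\bar\alpha$ of $\Delta^\infty$ the root group $U_{\bar\alpha}$ acts transitively on the apartments of $\Delta^\infty$ through $\bar\alpha$. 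The key observation is that these root groups live inside $\Aut(\Delta)^+$ itself: a type-preserving automorphism of $\Delta$ fixing pointwise the star of a half-apartment running out to infinity in the direction of $\bar\alpha$ induces an element of $U_{\bar\alpha}$.

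The first step is to manufacture root automorphisms out of non-discreteness. Since $\Delta$ is locally finite, $\Aut(\Delta)$ is totally disconnected locally compact with compact open chamber-stabilizers, and non-discreteness amounts to the pointwise fixator of $B(c,n)$ in $\Aut(\Delta)^+$ being non-trivial for every chamber $c$ and every $n$. Fixed-point sets of type-preserving automorphisms are convex subcomplexes, so choosing $1 \neq g_n$ fixing $B(c,n)$ yields proper convex subcomplexes $\Fix(g_n)$ containing balls of unbounded radius. Using that the $g_n$ all lie in the compact group $\Aut(\Delta)^+_c$ and re-centering --- by the transitivity hypothesis --- the configuration formed by $\Fix(g_n)$ and some nearby chamber it moves, I would extract in the limit a non-trivial $u \in \Aut(\Delta)^+$ whose fixed set contains the star of a half-apartment $\alpha$ of some apartment. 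By the faithfulness noted above, $u$ induces a \emph{non-trivial} element of $U_{\bar\alpha}$, where $\bar\alpha$ is the root of $\Delta^\infty$ carried by $\alpha$.

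The remaining and harder step is to turn one non-trivial root automorphism into the full Moufang condition; this is where panel-transitivity is essential, and I would organise it around tree-walls. To every panel of $\Delta^\infty$ there is associated a tree-wall of $\Delta$ --- a locally finite tree $T$ whose ends are precisely the chambers of $\Delta^\infty$ incident to that panel --- in such a way that $U_{\bar\alpha}$ is identified with the pointwise fixator of a geodesic line of $T$ (together with a first neighbourhood of it), that transitivity of $U_{\bar\alpha}$ on apartments through $\bar\alpha$ becomes a $2$-transitivity property of the induced action on the ends of $T$, and that ``$\Delta^\infty$ is Moufang'' becomes the statement that all tree-walls are Moufang trees. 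Transitivity of $\Aut(\Delta)^+$ on panels of $\Delta$ of each type makes all tree-walls of a given type equivalent under $\Aut(\Delta)^+$, and makes the stabilizer of a tree-wall transitive on its vertices; so, after conjugating $u$ to learn that \emph{every} root group of $\Delta^\infty$ is non-trivial, there is essentially one tree-wall to analyse, and the task is to upgrade non-triviality to $2$-transitivity of the local action there. I would do this by combining the transitivity just noted, the non-trivial root automorphism, and the rigidity that comes from $T$ sitting inside the $\tilde{A}_2$-building $\Delta$ and interlocking with its neighbouring tree-walls, which constrains how collineations of $\Delta^\infty$ fixing the star of $\bar\alpha$ are allowed to lift to $\Delta$. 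Once the local action on every tree-wall is shown to be $2$-transitive, $\Delta^\infty$ is Moufang and $\Delta$ is Bruhat--Tits.

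The two genuine difficulties are the limiting argument of the second step --- extracting from ``arbitrarily deep ball-fixators'' an automorphism that really fixes the star of a half-apartment and is non-trivial at infinity --- and, above all, the upgrade in the third step from non-triviality of root groups to their transitivity. The latter is delicate because the analogous statement is \emph{false} for abstract trees: there are closed non-discrete vertex-transitive group actions on trees (certain Burger--Mozes universal groups, say) with non-trivial ``root groups'' whose local action is not $2$-transitive, so one cannot argue on a tree-wall in isolation. Identifying exactly which combinatorial features of $\tilde{A}_2$-buildings, together with panel-transitivity, rule this out and force the Moufang property is, I expect, the heart of the matter; the passages into and out of the ``Moufang at infinity / tree-wall'' picture are comparatively routine.
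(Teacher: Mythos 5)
Your proposal correctly reduces the theorem to ``panel-transitive and non-discrete $\Rightarrow$ Moufang at infinity'', and you are right to flag the upgrade from a single non-trivial root automorphism to full transitivity of the root groups as the heart of the matter. But that is precisely the step your proposal does not contain: you observe (correctly) that the analogous upgrade fails for group actions on trees, so no argument confined to a tree-wall can work, and you then leave open ``which combinatorial features of $\tilde{A}_2$-buildings, together with panel-transitivity, rule this out''. As written, the proposal is a plan whose central step is missing, so there is a genuine gap.

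For comparison, the paper never works at infinity until the very last moment. It works with the finite quotient geometries $\HH n(O)$ (Hjelmslev planes of level $n$ at each vertex), and the decisive input that resolves exactly the difficulty you isolate is \emph{Kantor's classification of flag-transitive collineation groups of finite projective planes} (Theorem~\ref{theorem:Kantor}): panel-transitivity plus a little non-discreteness forces chamber-transitivity (Proposition~\ref{proposition:panel}, a parity/coloring argument using the cycle structure of elations), chamber-transitivity makes each residue plane flag-transitive, and Kantor's theorem then forces each $\HH 1(O)$ to be Desarguesian with $\PsiPsi 1(O)\geq\PSL(3,q)$ --- i.e.\ \emph{all} elations are present locally, which is the ``non-trivial $\Rightarrow$ transitive'' upgrade in disguise. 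This is exactly the kind of finite, classification-theoretic fact that distinguishes $\tilde{A}_2$-buildings from trees and blocks the Burger--Mozes-type counterexamples you worry about. The second ingredient you would still need is the propagation from level $1$ to level $n$: the paper does this by the interlocking induction $(A_N),(B_N),(C_N)$ in Proposition~\ref{proposition:bigpiece}, conjugating $\hh 1$-collineations around using chamber-transitivity and the rigidity lemmas (Lemmas~\ref{lemma:segment}, \ref{lemma:411easy}, \ref{lemma:changeaxis}), and only then passes to $\Delta^\infty$ by an inverse limit of elations (Lemma~\ref{lemma:nMoufang}). Your first step (extracting, from arbitrarily deep ball-fixators, a non-trivial automorphism acting trivially on a large neighbourhood of a segment) has a workable counterpart in the paper (Proposition~\ref{proposition:h1col}), done combinatorially rather than by a compactness limit; that part of your outline is not the problem. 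If you want to complete your route at infinity, you will still need a substitute for Kantor's theorem, and I do not see one.
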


In the text we actually state and prove Theorem~\ref{maintheorem:A'} which is a more precise version of Theorem~\ref{maintheorem:A}. The same will be true for our other main results: alternative statements can be found in the text.

A natural question to ask is whether the panel-transitivity can be weakened in this theorem, and for instance replaced by vertex-transitivity. Because of the $\tilde{A}_2$-building described in~\cite{HVM2}*{\S7} (see (4) above), such a result cannot be true in these general terms. The next theorem however gives additional hypotheses under which a similar conclusion can be obtained.

\begin{maintheorem}\label{maintheorem:B}
Let $\Delta$ be a locally finite thick $\tilde{A}_2$-building. Suppose that $\Aut(\Delta)$ is transitive on vertices and unimodular, that $\Aut(\Delta)^+$ is transitive on vertices of each type, and that $\Delta$ has thickness $p+1$ for some prime $p$. Then either:
\begin{enumerate}[(a)]
\item $\Delta$ is Bruhat--Tits; or
\item $\Aut(\Delta)$ is discrete.
\end{enumerate}
\end{maintheorem}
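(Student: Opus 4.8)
The plan is to derive Theorem~\ref{maintheorem:B} from Theorem~\ref{maintheorem:A} (more precisely, from its sharper version Theorem~\ref{maintheorem:A'} stated in the text). So suppose $\Aut(\Delta)$ is non-discrete; we must show that $\Delta$ is Bruhat--Tits, and by Theorem~\ref{maintheorem:A'} it is enough to prove that $\Aut(\Delta)^+$ is transitive on the panels of each type.

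I would organise the argument around a dichotomy: under the standing hypotheses (non-discreteness, $\Aut(\Delta)^+$ transitive on vertices of each type, thickness $p+1$ with $p$ prime), I claim that \emph{either} $\Aut(\Delta)^+$ is transitive on the panels of each type, \emph{or} $\Aut(\Delta)$ fixes a vertex of the spherical building $\partial\Delta$ at infinity. Granting this, the second alternative is excluded using unimodularity: since $\Aut(\Delta)$ is transitive on vertices it acts cocompactly on $\Delta$, so Caprace--Monod's Theorem~M \cite{Caprace-Monod2}*{Theorem~M} applies and a unimodular $\Aut(\Delta)$ cannot fix a vertex at infinity. (This is exactly the point where the hypotheses rule out the vertex-transitive non-discrete building of \cite{HVM2}*{\S7}, whose automorphism group does fix a vertex at infinity.) One is then left with panel-transitivity on each type, and Theorem~\ref{maintheorem:A'} gives the conclusion.

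To establish the dichotomy, fix a vertex $v$, of type $0$ say, with link $\mathrm{lk}(v)$ the incidence graph of a projective plane $\pi$ of order $p$; as $\Aut(\Delta)^+$ preserves types, $G_v := \Aut(\Delta)^+_v$ acts on $\pi$ as a group of collineations, the type-$1$ and type-$2$ vertices adjacent to $v$ playing the roles of points and lines. Panel-transitivity on each type amounts to $G_v$ being transitive on the points, and on the lines, of $\pi$ (here one uses vertex-transitivity to pass between links). I would then use non-discreteness to force the local action to be rich: the images in $\Aut(\pi)$ of the stabilisers of larger and larger balls around $v$ form an increasing sequence, and by vertex-transitivity on each type the resulting ``limit'' collineation group is the same at every vertex of type~$0$; a propagation argument should show that if this group fails to be transitive on points (or on lines) of $\pi$, then its orbit structure is rigid enough to be chased through the links along a fixed direction, producing a vertex of $\partial\Delta$ fixed by all of $\Aut(\Delta)$. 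Primality of $p$ enters here in an essential way: it severely restricts the collineation groups of $\pi$ that can arise (via Kantor-type results on flag-transitive, and more generally on point-transitive, collineation groups of projective planes of prime order), which is what makes the orbit structure rigid and forces the clean dichotomy instead of a range of intermediate possibilities.

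The main obstacle is precisely this last step — turning failure of point- or line-transitivity of $G_v$ on $\pi$ into a vertex of $\partial\Delta$ fixed by $\Aut(\Delta)$. This requires a genuine interplay between the combinatorics of $\tilde{A}_2$-buildings (how the two panel types around a vertex interact, and when an automorphism of a ball extends further out) and the classification-flavoured input on collineation groups of projective planes of prime order; by contrast, the reduction to Theorem~\ref{maintheorem:A'} and the use of unimodularity via Theorem~M are comparatively formal.
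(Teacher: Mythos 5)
Your reduction is the right one: the paper also proves Theorem~\ref{maintheorem:B} by showing that, under the stated hypotheses and non-discreteness, $\Aut(\Delta)^+$ must be transitive on panels of each type, and then invoking Theorem~\ref{maintheorem:A'}. But the heart of your plan is the claimed dichotomy ``either panel-transitive, or $\Aut(\Delta)$ fixes a vertex at infinity,'' and that is precisely the step you do not prove: the ``propagation argument'' chasing orbit structure through the links to produce an $\Aut(\Delta)$-fixed point of $\partial\Delta$ is only asserted. This is a genuine gap, and not a small one. To get a point at infinity fixed by \emph{all} of $\Aut(\Delta)$ (not merely a nested chain of vertex stabilizers) you would need the local data (the fixed point or fixed line of the stabilizer acting on the link, supplied by Lemma~\ref{lemma:fixpoint}) to single out a \emph{canonical}, $\Aut(\Delta)$-equivariant direction at every vertex, and to show these directions cohere into a ray; nothing in your sketch forces uniqueness or coherence, and the dichotomy itself is not established anywhere (it is suggested by the example of \cite{HVM2}*{\S7}, but that is evidence, not proof). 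The entire technical content of the paper's Section~4 corresponds to the step you leave open.

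It is also worth noting that the paper uses unimodularity in a different, purely local way, and never goes through the boundary: Lemma~\ref{lemma:unimodular} (a Haar-measure comparison) shows that if $\Aut(\Delta)^+(v)$ fixes an adjacent vertex $w$ then $\Aut(\Delta)^+(v)=\Aut(\Delta)^+(w)$. Combining this with Lemma~\ref{lemma:fixpoint} (where primality enters, via the cycle structure of elations in Lemma~\ref{lemma:cycles}, rather than via Kantor-type classification, which is only used in the panel-transitive case), one colours such edges ``red'' and then, in Proposition~\ref{proposition:bigone}, derives a direct combinatorial contradiction from the failure of panel-transitivity: each vertex lies on exactly two red edges, opposite in its link, giving red bi-infinite geodesics, and the action of the $\hh 1$-collineations furnished by Proposition~\ref{proposition:h1col} is incompatible with this configuration. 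So the paper symmetrizes the fixing relation with unimodularity at the outset instead of postponing unimodularity to a final application of \cite{Caprace-Monod2}*{Theorem~M}; your route would additionally have to prove the fixed-point-at-infinity dichotomy without that symmetrization, which is exactly what is missing.
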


Theorem~\ref{maintheorem:B} can in particular be applied to the locally finite thick $\tilde{A}_2$-buildings $\Delta$ with a vertex-regular lattice (see~(3) above) as soon as the thickness of $\Delta$ is $p+1$ for some prime $p$ (i.e.\ the local projective planes in $\Delta$ have order $p$).

As pointed out by the referee, the question whether the automorphism group of an exotic $\tilde{A}_2$-building admitting a cocompact lattice is always discrete was asked by Tim Steger in talks given in Blaubeuren and Orléans in 2007. Theorems~\ref{maintheorem:A} and~\ref{maintheorem:B} provide partial answers to that question.

Our results can also be viewed as giving weak hypotheses on $\Aut(\Delta)$ under which $\Delta$ is automatically Bruhat--Tits. It was proved in~\cite{VMVS} by Van Maldeghem and Van Steen that $\Delta$ is Bruhat--Tits as soon as $\Aut(\Delta)$ is \textit{Weyl-transitive}. Recall that $\Aut(\Delta)$ is \textbf{Weyl-transitive} if for any two pairs of chambers $(c,d)$, $(c',d')$ in $\Delta$ with equal Weyl-distances ($\delta(c,d) = \delta(c',d')$), there exists $g \in \Aut(\Delta)^+$ such that $g(c) = c'$ and $g(d) = d'$. Theorem~\ref{maintheorem:A} actually shows that having $\Aut(\Delta)^+$ transitive on panels of each type and non-discrete (which is strictly weaker than requiring the Weyl-transitivity) is already sufficient to have the same conclusion. Our proof of Theorem~\ref{maintheorem:A} actually uses the machinery developed by the authors in~\cite{VMVS}. 

Note that the fact that Weyl-transitivity implies Bruhat--Tits was later proved to be true in any Euclidean building. Indeed, if $X$ is an Euclidean building and if $\Aut(X)$ is Weyl-transitive, then $\Aut(X^\infty)$ is \textbf{strongly transitive} on the building at infinity $X^\infty$ (i.e.\ transitive on pairs $(A,c)$ where $A$ is an apartment of $X^\infty$ and $c$ is a chamber of $A$), which implies that $\Aut(X)$ is strongly transitive on $X$ by \cite{CaCi}*{Theorem~1.1} and then that $X$ is Bruhat--Tits by~\cite{Caprace-Monod}*{Corollary~E} or~\cite{Radu}*{Corollary~B}.

\medskip

The following result is of different nature but is somewhat complementary to Theorems~\ref{maintheorem:A} and~\ref{maintheorem:B}. Indeed, it gives a local condition under which an $\tilde{A}_2$-building is ensured to be exotic.

\begin{maintheorem}\label{maintheorem:exotic}
Let $\Delta$ be a locally finite thick $\tilde{A}_2$-building, let $x_0$ and $x_1$ be two adjacent vertices in $\Delta$ and let $C$ be the set of chambers adjacent to both $x_0$ and $x_1$. For each $j \in \{0,1\}$, let $G_j \leq \Sym(C)$ be the image of $\Aut(\Pi_{x_j})(x_{1-j})$ in $\Sym(C)$, where $\Pi_{x_j}$ is the projective plane at $x_j$. If $G_0 \neq G_1$, then $\Delta$ is exotic.
\end{maintheorem}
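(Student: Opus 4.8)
The plan is to argue by contraposition: assuming $\Delta$ is Bruhat--Tits, I will show $G_0 = G_1$ inside $\Sym(C)$. Write $p = \{x_0,x_1\}$ and let $H \leq \Aut(\Delta)^+$ be the pointwise stabiliser of $p$ (equivalently $\Aut(\Delta)^+_{x_0}\cap\Aut(\Delta)^+_{x_1}$, since $x_0$ and $x_1$ have distinct types). Let $\widehat{G} \leq \Sym(C)$ be the image of the action of $H$ on $C$. The first observation, valid for any $\Delta$, is that $\widehat{G} \leq G_0 \cap G_1$: any $h \in H$ induces on $\Pi_{x_j}$ an automorphism fixing the vertex $x_{1-j}$, and the permutation of $C$ it produces is by definition the image of $h|_{\Pi_{x_j}}$ under $\Aut(\Pi_{x_j})(x_{1-j}) \to \Sym(C)$. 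So the whole problem reduces to showing that, in the Bruhat--Tits case, both $G_0$ and $G_1$ coincide with $N_{\Sym(C)}(S)$ for one common subgroup $S \leq \widehat{G}$.

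For the structure of $G_0$ and $G_1$ I would invoke the classification quoted in the introduction: a Bruhat--Tits $\tilde{A}_2$-building is the affine building of $\mathrm{PGL}(3,D)$ for a finite-dimensional division algebra $D$ over a local field, so all of its vertex residues are isomorphic to the Desarguesian projective plane of order $q := |C| - 1$. A direct computation in $\mathrm{PG}(2,q)$ --- using $\Aut(\mathrm{PG}(2,q)) = \mathrm{P\Gamma L}_3(q)$ and restricting to the stabiliser of a point (resp.\ a line), which maps onto $\mathrm{P\Gamma L}_2(q)$ in its action on the pencil of the $q+1$ lines through that point (resp.\ points on that line) --- then identifies each of $G_0$ and $G_1$, as a permutation group on $C$, with $\mathrm{P\Gamma L}_2(q)$ in its natural action on the projective line. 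In particular $G_0 \cong G_1 \cong \mathrm{P\Gamma L}_2(q)$, but I still need them to be \emph{equal} as subgroups of $\Sym(C)$.

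The link is to produce a large subgroup of $\widehat{G}$. Since $\mathrm{PGL}(3,D) \leq \Aut(\Delta)^+$, the stabiliser of $p$ in $\mathrm{PGL}(3,D)$ lies in the stabiliser $K$ of $x_0$, and reduction modulo the maximal ideal sends $K$ onto $\mathrm{PGL}_3(q) \leq \Aut(\Pi_{x_0})$; the stabiliser of $p$ goes onto the stabiliser there of $x_1$, whose action on the pencil is all of $\mathrm{PGL}_2(q)$. Hence $S := \mathrm{PSL}_2(q)$ satisfies $S \leq \widehat{G} \leq G_0 \cap G_1$. Now $\mathrm{PSL}_2(q)$ is the unique subgroup of $G_j \cong \mathrm{P\Gamma L}_2(q)$ isomorphic to it (a counting argument forces any such subgroup into, hence onto, the normal one), so $S$ is the socle of $G_j$ and in particular $S \trianglelefteq G_j$; and since $S$ is transitive on $C$ and (for $q \geq 4$) non-abelian simple, its centraliser in $\Sym(C)$ is trivial, so the restriction $N_{\Sym(C)}(S) \to \Aut(S)$ is injective, giving $|N_{\Sym(C)}(S)| \leq |\Aut(\mathrm{PSL}_2(q))| = |\mathrm{P\Gamma L}_2(q)| = |G_j|$. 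Combined with $G_j \leq N_{\Sym(C)}(S)$ this forces $G_0 = N_{\Sym(C)}(S) = G_1$, contradicting $G_0 \neq G_1$; therefore $\Delta$ is exotic. The two degenerate thicknesses $q \in \{2,3\}$ I would treat by hand --- there $\mathrm{P\Gamma L}_2(q) = \Sym(C)$, so $G_0 = G_1 = \Sym(C)$ outright.

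The main obstacle I anticipate is the production of $S$: one must be sure that the panel stabiliser inside $\mathrm{PGL}(3,D)$ really surjects, after reduction, onto the relevant stabiliser in $\mathrm{PGL}_3$ over the residue field, so that the \emph{full} $\mathrm{PSL}_2(q)$ --- not merely some transitive-but-smaller subgroup --- lands in $\widehat{G}$. Over a commutative complete discrete valuation ring this is the standard surjectivity of $\mathrm{GL}_3(\mathcal{O}) \to \mathrm{GL}_3(\mathcal{O}/\mathfrak{m})$; in the genuinely non-commutative case $D \neq$ field it needs the corresponding statement for maximal orders, which holds but should be spelled out with care. One could instead extract $S$ from the root groups of the Moufang building at infinity, bypassing the classification, but then identifying the group they generate on $C$ precisely as $\mathrm{PSL}_2(q)$ requires the commutator relations of those root groups, so it is not obviously shorter. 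The remaining pieces --- the projective-geometry computation of $G_j$ and the normaliser/centraliser bookkeeping in $\Sym(C)$ --- are routine.
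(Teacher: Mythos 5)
Your argument is correct in outline and reaches the stated conclusion, but it is genuinely different from the paper's: you prove the contrapositive by invoking the classification of Bruhat--Tits $\tilde{A}_2$-buildings (as buildings of $\mathrm{PGL}(3,D)$) and extracting a common copy of $\mathrm{PSL}(2,q)$ inside $G_0\cap G_1$ from the reduction of the panel stabilizer modulo the maximal ideal of the maximal order. The paper instead proves the strictly stronger Theorem~\ref{maintheorem:exotic'}: if $G_0\neq G_1$ then $\Delta$ is not even \emph{$2$-Moufang}. Its source of the common subgroup is purely local: $2$-Moufangness forces every elation of $\HH 1(x_0)$ to extend to an elation of the level-$2$ Hjelmslev plane $\HH 2(x_0)$, and any collineation of $\HH 2(x_0)$ fixing $x_1$ also induces a collineation of $\HH 1(x_1)$; hence the image in $\Sym(C)$ of the little projective group's stabilizer of $x_1$ (a conjugate of $\mathrm{PGL}(2,q)$) sits inside both $G_0$ and $G_1$. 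The endgame is then the same normalizer computation you perform, with $\mathrm{PGL}(2,q)$ in place of $\mathrm{PSL}(2,q)$: each $G_j$, being conjugate to $\mathrm{P\Gamma L}(2,q)$, equals $N_{\Sym(C)}$ of that common subgroup. What your route costs is the heavy external input you yourself flag — the Weiss classification and the surjectivity of reduction for maximal orders in division algebras (plus the separate treatment of $q\in\{2,3\}$, which the paper's normalizer phrasing absorbs automatically since $\mathrm{P\Gamma L}(2,q)=\Sym(C)$ there) — and it yields only ``exotic'' rather than the finitely checkable local obstruction ``not $2$-Moufang'', which is what makes the theorem usable in the Singer-lattice counting of Corollary~\ref{maincorollary:bound'}. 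If you want to keep your argument self-contained, note that the only thing really needed from the Bruhat--Tits hypothesis is $2$-Moufangness, at which point the extension-of-elations observation replaces the entire reduction-theoretic step.
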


We apologize that the condition is indeed that the groups $G_0$ and $G_1$ do not coincide as subgroups of $\Sym(C)$. In particular they might very well be isomorphic.

Our theorems can be applied in the context of \textit{Singer cyclic lattices}. Following \cite{Witzel}, a \textbf{Singer cyclic lattice} is a group $\Gamma \leq \Aut(\Delta)$ acting simply transitively (i.e.\ regularly) on the panels of each type of an $\tilde{A}_2$-building $\Delta$ and such that each vertex stabilizer in $\Gamma$ is cyclic. It is called \textbf{exotic} if $\Delta$ is exotic, and the \textbf{parameter} of $\Gamma$ is the order of the local projective planes in $\Delta$.

\begin{maincorollary}\label{maincorollary:bound}
For each $q \geq 2$, there are at most $\left(\frac{q(q^2-1)}{3}\right)^2$ isomorphism classes of non-exotic Singer cyclic lattices with parameter $q$.
\end{maincorollary}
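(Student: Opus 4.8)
The plan is to bound the number of non-exotic Singer cyclic lattices with parameter $q$ by counting, on the group-theoretic side, the possible "local data" that can produce such a lattice, and then invoking the known classification of Bruhat--Tits $\tilde{A}_2$-buildings together with Theorem~\ref{maintheorem:exotic} to control how much freedom there really is. Concretely, a Singer cyclic lattice $\Gamma$ with parameter $q$ acts simply transitively on the panels of each type, so the building $\Delta$ is homogeneous of thickness $q+1$; if $\Gamma$ is non-exotic then $\Delta$ is one of the finitely many Bruhat--Tits buildings of that thickness, namely the one attached to $\mathrm{PGL}(3,D)$ for a finite-dimensional division algebra $D$ over a local field with residue data giving order $q$. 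The first step is therefore to recall that for each $q$ there is (up to isomorphism) a bounded number of such $\Delta$ — in fact at most a fixed small number — so it suffices to bound, for a fixed Bruhat--Tits $\Delta$, the number of conjugacy classes (equivalently isomorphism classes) of Singer cyclic lattices inside $\Aut(\Delta)$.

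Next I would reduce the count to a purely finite/local combinatorial one. Since $\Gamma$ is panel-regular, the whole lattice is determined (via the standard "local action" or complex-of-groups description, cf.\ \cite{Essert}, \cite{Witzel}) by the data of a Singer cycle on the local projective plane of order $q$ together with a choice of how it glues the two vertex stabilizers along the shared chambers of a fixed panel. A Singer cycle of a projective plane of order $q$ is a cyclic subgroup of order $q^2+q+1$ of $\mathrm{PGL}(3,q)$ acting regularly on points (and on lines); fixing one such cyclic subgroup up to conjugacy, the remaining freedom is exactly a choice of bijection between the point-cycle and the line-cycle compatible with incidence, i.e.\ an element controlling the "shift," of which there are at most $q^2+q+1$ relevant ones, refined further by the order of the normalizer. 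Carrying out this bookkeeping — counting triangle presentations of Singer type modulo the obvious symmetries — gives a crude upper bound of the shape $\bigl(\tfrac{q(q^2-1)}{3}\bigr)^2 = \bigl(\tfrac{(q-1)q(q+1)}{3}\bigr)^2$; the two factors reflect the two vertices $x_0, x_1$ of the fixed panel, and the internal count $\tfrac{(q^3-q)}{3}$ is the number of admissible local gluing parameters after dividing out by the action of the stabilizer of a Singer cycle (the factor $3$ coming from the centre / the cyclic symmetry of order dividing $3$ in the normalizer of a Singer cycle, a classical fact about $\mathrm{PGL}(3,q)$).

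Finally, Theorem~\ref{maintheorem:exotic} enters to rule out spurious possibilities: it says that if the two local groups $G_0$ and $G_1$ attached to the panel $\{x_0,x_1\}$ differ as subgroups of $\Sym(C)$, then $\Delta$ is exotic. Hence for a non-exotic Singer lattice the two local data are forced to match up as permutation groups on the common chamber set, which is precisely what collapses an a priori product over \emph{all} panels into the single pair $(G_0,G_1)$ sitting over one chosen panel, giving the squared bound rather than something growing with the number of panels. So the argument is: (i) finitely many Bruhat--Tits $\Delta$ per $q$; (ii) each Singer lattice in a fixed $\Delta$ is encoded by local Singer data at one panel; (iii) Theorem~\ref{maintheorem:exotic} forces compatibility of the two ends of that panel, so the data lies in a set of size at most $\bigl(\tfrac{q(q^2-1)}{3}\bigr)^2$; (iv) conclude.

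The main obstacle I expect is step (ii)–(iii): making precise that a Singer cyclic lattice is genuinely reconstructible from the local data at a single panel, and that two lattices giving the same such data (up to the allowed symmetries) are isomorphic. This requires the rigidity that Bruhat--Tits $\tilde{A}_2$-buildings have no "exotic" gluings — exactly the content funnelled through Theorem~\ref{maintheorem:exotic} — together with a careful identification of the symmetry group by which one must quotient (the normalizer of a Singer cycle in $\mathrm{PGL}(3,q)$, extended by the duality swapping points and lines), so that the constant comes out as $\tfrac{q(q^2-1)}{3}$ and not merely $q^2+q+1$. Getting that constant exactly right, rather than up to a bounded factor, is the delicate part; everything else is soft.
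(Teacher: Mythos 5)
There is a genuine gap: the heart of the corollary is the derivation of the number $\left(\frac{q(q^2-1)}{3}\right)^2$, and your sketch neither carries it out nor identifies the mechanism that produces it. In the paper's argument a Singer cyclic lattice is encoded by a difference matrix with \emph{three} columns (one labelled projective plane per vertex type of a chamber, not data at a single panel); after normalizing all three columns to a fixed Desarguesian difference set and the first column to a fixed ordering, the remaining freedom is a pair of permutations $\alpha_1,\alpha_2\in\Sym(q+1)$, a priori $((q+1)!)^2$ choices. The sole role of Theorem~\ref{maintheorem:exotic'} is to kill this factorial: $2$-Moufangness forces the three flag-label groups $G_0,G_1,G_2\leq\Sym(q+1)$ (each conjugate to $\mathrm{P\Gamma L}(2,q)$, with $G_t=\alpha_t^{-1}G_0\alpha_t$) to coincide, and since $\mathrm{P\Gamma L}(2,q)$ is self-normalizing in $\Sym(q+1)$ this puts $\alpha_1,\alpha_2\in G_0$, giving $|\mathrm{P\Gamma L}(2,q)|^2=(q(q^2-1)\eta)^2$ matrices, which is then divided by the $(3\eta)^2$ equivalences coming from the multiplier group of a Desarguesian difference set. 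Your account misses this entirely: you attribute the factor $q(q^2-1)/3$ to a "shift parameter" of size at most $q^2+q+1$ modulo the normalizer of a Singer cycle (that normalizer has order $(q^2+q+1)\cdot 3\eta$, and $q(q^2-1)/3>q^2+q+1$ for $q\geq 5$, so the numerology cannot work), you place the defining data at the two ends of one panel rather than at the three vertex types, and you describe Theorem~\ref{maintheorem:exotic} as "collapsing a product over all panels", which is not what it does in any version of this count.

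Your step (i) is also factually wrong and would block the route you propose: there are infinitely many pairwise non-isomorphic Bruhat--Tits $\tilde{A}_2$-buildings of thickness $q+1$ (one for $\mathrm{PGL}(3,K)$ for each local field $K$ with residue field of order $q$ --- e.g.\ all totally ramified extensions of $\mathbf{Q}_p$ --- besides division-algebra cases), since the building determines its Moufang plane at infinity and hence the field. So "finitely many buildings, then count conjugacy classes of Singer lattices in each $\Aut(\Delta)$" does not get off the ground, and restricting to those buildings that actually admit Singer cyclic lattices is a nontrivial fact you would have to prove. The paper avoids the classification of Bruhat--Tits buildings altogether: it weakens "non-exotic" to "$\Delta$ is $2$-Moufang" and bounds the number of equivalence classes of difference matrices with that property directly. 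To repair your proposal you would need to (a) drop step (i) in favour of the purely combinatorial parametrization by difference matrices (or triangle presentations), and (b) actually prove the key implication that $2$-Moufangness forces the gluing permutations into a copy of $\mathrm{P\Gamma L}(2,q)$, which is where Theorem~\ref{maintheorem:exotic} genuinely enters.
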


In \cite{Witzel}, Witzel conjectured that almost all Singer cyclic lattices are exotic and pairwise not quasi-isometric. Using the fact that the total number of Singer cyclic lattices with parameter $q$ grows super-exponentially with $q$ (see \cite{Witzel}*{Theorem~B}), our previous result thus solves the first part of that conjecture.

\begin{maincorollary}\label{maincorollary:limit}
Almost all Singer cyclic lattices are exotic in the following sense:
$$\lim_{q \to \infty} \frac{|\{\text{exotic Singer cyclic lattices with parameter $q$}\}/\sim\!|}{|\{\text{Singer cyclic lattices with parameter $q$}\}/\sim\!|} = 1,$$
where $q$ ranges over prime powers and $\sim$ denotes the isomorphism relation.
\end{maincorollary}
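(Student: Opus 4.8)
The plan is to deduce Corollary~\ref{maincorollary:limit} from Corollary~\ref{maincorollary:bound} together with the quoted super-exponential lower bound on the total count of Singer cyclic lattices. First I would recall the precise form of \cite{Witzel}*{Theorem~B}: it gives a lower bound of the shape $N(q) \geq c^{q^2}$ (or some similarly super-exponential expression in $q$) for the number of isomorphism classes of Singer cyclic lattices with parameter $q$, valid for $q$ ranging over prime powers. Denote by $E(q)$ the number of \emph{exotic} Singer cyclic lattices with parameter $q$ up to isomorphism, and by $P(q) = N(q) - E(q)$ the number of non-exotic ones. Corollary~\ref{maincorollary:bound} gives $P(q) \leq \left(\tfrac{q(q^2-1)}{3}\right)^2$, a polynomial bound of degree $6$ in $q$.

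The ratio in the statement is exactly $E(q)/N(q) = 1 - P(q)/N(q)$, so it suffices to show $P(q)/N(q) \to 0$ as $q \to \infty$ through prime powers. This follows immediately by comparing the two bounds: $0 \leq P(q)/N(q) \leq \left(\tfrac{q(q^2-1)}{3}\right)^2 / c^{q^2}$, and since a polynomial in $q$ is dominated by $c^{q^2}$ for any $c>1$, the right-hand side tends to $0$. Hence $E(q)/N(q) \to 1$. The only mild point of care is to make sure the denominator $N(q)$ is nonzero for all large prime powers $q$ so that the ratio is well-defined; this is guaranteed by the lower bound from \cite{Witzel}*{Theorem~B} (for sufficiently large $q$, which is all that matters for the limit), or alternatively by noting that exotic Singer cyclic lattices with arbitrarily large parameter exist as already mentioned in the introduction.

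The main obstacle, such as it is, is purely bookkeeping: one must confirm that Corollary~\ref{maincorollary:bound} and \cite{Witzel}*{Theorem~B} are phrased in terms of the \emph{same} counting convention (isomorphism classes of lattices, with $q$ a prime power), so that $E(q)$, $P(q)$ and $N(q)$ genuinely satisfy $N(q) = E(q) + P(q)$ with $P(q)$ the quantity bounded in Corollary~\ref{maincorollary:bound}. Once the definitions are aligned, the proof is a one-line asymptotic comparison of a polynomial against a super-exponential function, and no further input is needed.
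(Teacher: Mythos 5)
Your proposal is correct and is essentially the paper's own argument: the paper quotes Witzel's Theorem~B in the form $A(q) = \frac{1}{162\eta^3}((q+1)!)^2$ (for $q = p^\eta$) as the lower bound on the total number of isomorphism classes, combines it with the bound $B(q) = \left(\frac{q(q^2-1)}{3}\right)^2$ on non-exotic ones from Corollary~\ref{maincorollary:bound}, and concludes from $B(q)/A(q) \to 0$. Your guessed shape $c^{q^2}$ for the lower bound differs from the actual factorial-type expression, but since both dominate any polynomial in $q$ the comparison goes through unchanged.
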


It is also a consequence of our discreteness result that all exotic Singer cyclic lattices live in an $\tilde{A}_2$-building with a discrete automorphism group. Using the fact that cocompact lattices in $\tilde{A}_2$-buildings are QI-rigid \cite{Kleiner-Leeb}, this in particular implies that they have finite index in their abstract commensurator group. This can be seen as an analog of the result of Margulis stating that an irreducible lattice in a connected semisimple Lie group $G$ with finite center and no compact factors is arithmetic as soon as it has infinite index in its commensurator in $G$, see \cite{Margulis}*{Theorem~IX.1.16}.

\subsection*{Acknowledgement}

I am grateful to Pierre-Emmanuel Caprace for suggesting me this problem and for encouraging me to read the inspiring work~\cite{VMVS} of Van Maldeghem and Van Steen.

\section{Preliminaries about Hjelmslev planes}\label{section:preliminaries}

This section gives the definition and first properties of Hjelmslev planes, which will be of central importance in the whole text. It is largely inspired from~\cite{VMVS}.

Given a vertex $O$ in $\Delta$ and a natural number $n \geq 1$, we define the geometry $\HH n(O)$ as follows. The geometry $\HH 1(O)$ is just the residue of $O$, which is a projective plane. So the points of $\HH 1(O)$ are certain vertices of $\Delta$ adjacent to $O$, and similarly for the lines of $\HH 1(O)$. Now for $n \geq 1$, the \textbf{points} (resp.\ \textbf{lines}) of $\HH n(O)$ are the sequences $(v_1, v_2, \ldots, v_n)$ of vertices of $\Delta$, where $v_1$ is a point (resp.\ a line) of $\HH 1(O)$ and $\{v_{i-1}, v_{i+1}\}$ is a pair of non-incident point and line in $\HH 1(v_i)$ (where $v_0 := O$). We will sometimes identify an element $(v_1, \ldots, v_n)$ of $\HH n(O)$ with the vertex $v_n$ of $\Delta$ (the other vertices $v_1, \ldots, v_{n-1}$ being uniquely determined by $v_n$). A point $(p_1, p_2, \ldots, p_n)$ of $\HH n(O)$ is \textbf{incident} with a line $(\ell_1, \ell_2, \ldots, \ell_n)$ if all vertices $O, p_1, \ldots, p_n, \ell_1, \ldots, \ell_n$ are contained in a common apartment and if $p_1$ and $\ell_1$ are adjacent in $\Delta$. This geometry $\HH n(O)$ is called a \textbf{projective Hjelmslev plane of level $n$}. When the vertex $O$ has no real importance, we write $\HH n$ instead of $\HH n(O)$. The point set (resp.\ line set) of $\HH n$ is then $\PP n$ (resp.\ $\LL n$), while incidence is denoted by $\II n$.

For $i \leq n$, the natural morphism from $\HH n$ to $\HH i$ is denoted by $\pipi i$. If $P, Q \in \PP n$ satisfy $\pipi i(P) = \pipi i(Q)$ for some $0 < i \leq n$, then we call $P$ and $Q$ \textbf{$i$-neighboring}. For $i = 1$ we just say that $P$ and $Q$ are \textbf{neighboring}. Similarly for lines. Also, if $P \in \PP n$ and $\ell \in \LL n$ are such that $\pipi i(P)\ \II i\ \pipi i(L)$ for some $0 < i \leq n$ then we say that $P$ is \textbf{$i$-near} $\ell$. Once again, $P$ is \textbf{near} $L$ when $i = 1$.

A \textbf{collineation} $\alpha$ of $\HH n$ is, as usual, a bijection from $\PP n$ to itself and a bijection from $\LL n$ to itself which preserve $\II n$. It is not hard to see that all $i$-neighboring relations are determined by the geometry of $\HH n$, so that every collineation $\alpha$ of $\HH n$ induces a unique collineation $\alpha^{\star_i}$ of $\HH i$. When acting on elements of $\HH i$, $\alpha^{\star_i}$ will sometimes be replaced by $\alpha$, so as to simplify the notation. For a fixed vertex $O$ in $\Delta$, the group of all collineations of $\HH n(O)$ which are induced from an automorphism in $\Aut(\Delta)^+$ fixing $O$ will be denoted by $\PsiPsi n(O)$. When $\alpha \in \PsiPsi n(O)$ is induced by $g \in \Aut(\Delta)^+$, it will be convenient to talk about the action of $\alpha$ (instead of $g$) on $\Delta$.

Given $P \in \PP n$ and $\ell \in \LL n$ with $P \ \II n \ \ell$, an \textbf{elation} of $\HH n$ with axis $\ell$ and center $P$ is a collineation of $\HH n$ fixing all points incident with $\ell$ and fixing all lines incident with $P$. As the next lemma shows, such an elation also fixes additional points and lines.

\begin{lemma}\label{lemma:quasi-elation}
Let $\alpha$ be an elation of $\HH n$ ($n \geq 2$) with axis $\ell$ and center $P$. Then $\alpha$ fixes all points (resp.\ lines) of $\HH n$ that are $(n-1)$-neighboring $P$ (resp.\ $\ell$).
\end{lemma}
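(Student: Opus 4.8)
The plan is to work by induction on $n$, the base case $n=2$ being the heart of the argument. Let me first explain the strategy for a point $Q$ that is $(n-1)$-neighboring $P$ (the statement for lines is formally dual and follows by the same argument applied in the dual Hjelmslev plane). The key geometric fact I would use is that in a projective Hjelmslev plane, two points $Q$ and $Q'$ are $(n-1)$-neighboring precisely when they cannot be separated at level $n-1$; in building-theoretic terms, identifying points of $\HH n(O)$ with vertices of $\Delta$ at distance $n$ from $O$ along a geodesic, $Q$ and $P$ are $(n-1)$-neighboring iff the geodesics $[O,Q]$ and $[O,P]$ agree up to distance $n-1$. So $Q$ lies "close to $P$" in a precise metric sense. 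Since $\alpha$ fixes $P$ and fixes $\pipi{n-1}(P) = \pipi{n-1}(Q)$ (as $\alpha^{\star_{n-1}}$ is an elation of $\HH{n-1}$ with axis $\pipi{n-1}(\ell)$ and center $\pipi{n-1}(P)$, which fixes its own center), the only freedom left for $\alpha(Q)$ is among the finitely many points of $\HH n$ with the same image under $\pipi{n-1}$ as $Q$. I would then pin down $\alpha(Q) = Q$ by exhibiting enough lines through $Q$ that $\alpha$ is already known to fix.

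The mechanism for that is the following. Since $P \ \II n\ \ell$ and $Q$ is near $P$ (being $(n-1)$-neighboring it with $n \geq 2$), the point $Q$ is near $\ell$, so there is a line $m$ through $Q$ with $\pipi 1(m)$ incident to $\pipi 1(\ell)$ in the underlying projective plane; more usefully, I want to find lines $m$ through $Q$ that are themselves $(n-1)$-neighboring $\ell$, because such lines are fixed by $\alpha$ once we know the line version of the lemma at the same level, or — to avoid circularity — because lines incident with $P$ are fixed by hypothesis and lines $(n-1)$-near… Let me restructure: the clean approach is to prove the point-statement and line-statement simultaneously by induction on $n$. For $n=2$: an elation $\alpha$ of $\HH 2$ with center $P$ and axis $\ell$ induces the identity on $\HH 1$ (an elation of a projective plane that fixes all points of a line and all lines through a point on that line, but here at level $1$ center and axis collapse so one checks $\alpha^{\star_1} = \id$ directly from the fixed-point data). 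Hence every point $Q$ with $\pipi 1(Q) = \pipi 1(P)$ is neighboring $P$, i.e. $1$-neighboring $= (n-1)$-neighboring; I must show all such $Q$ are fixed. Through such a $Q$ there pass several lines $m$ with $\pipi 1(m)$ incident to $\pipi 1(P) = \pipi 1(Q)$; among the lines incident with $P$ in $\HH 2$ (all fixed by $\alpha$), each has image under $\pipi 1$ a line through $\pipi 1(P)$, and by a counting/incidence argument in the Hjelmslev plane $Q$ lies on at least two lines that are fixed by $\alpha$ — either lines through $P$, or lines all of whose points are incident with $\ell$ — and two distinct fixed lines meeting in the single point $Q$ force $\alpha(Q) = Q$.

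For the inductive step $n \to n+1$: given an elation $\alpha$ of $\HH{n+1}$ with center $P$, axis $\ell$, the induced collineation $\alpha^{\star_n}$ is an elation of $\HH n$ with center $\pipi n(P)$ and axis $\pipi n(\ell)$, so by induction it fixes every point $n$-neighboring… wait, $(n-1)$-neighboring $\pipi n(P)$ — I then need to upgrade from level $n$ to level $n+1$, showing each point $Q$ of $\HH{n+1}$ that is $n$-neighboring $P$ is fixed, knowing its image $\pipi n(Q)$ is fixed and that $Q$ lies on two distinct $\alpha$-fixed lines of $\HH{n+1}$. The existence of those two fixed lines through $Q$ is what I'd extract from the incidence structure: a point $n$-neighboring $P$ is in particular $n$-near $\ell$, giving a line through it that is $n$-near $\ell$ hence (another application of the induction / the dual statement) fixed, and a second fixed line comes from projecting $Q$ toward $P$ through the "neighborhood" of $P$. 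The main obstacle I anticipate is exactly this bookkeeping: making the "two distinct fixed lines through $Q$" claim precise and verifying the lines are genuinely distinct and genuinely fixed, which requires a careful analysis of incidence among neighboring elements in $\HH n$ — essentially re-deriving the relevant Hjelmslev-plane axioms from the building geometry of $\Delta$, or quoting them from~\cite{VMVS}. Everything else is a routine induction once that local incidence lemma is in hand.
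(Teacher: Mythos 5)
The paper offers no argument for this lemma (it simply quotes Lemma~5 of \cite{VMVS}), so your proposal has to stand on its own, and it does not: the central mechanism fails precisely in the regime the lemma is about. You want to force $\alpha(Q)=Q$, for $Q$ a point $(n-1)$-neighboring $P$ and not incident with $\ell$, by exhibiting two $\alpha$-fixed lines ``meeting in the single point $Q$''. But the only lines through $Q$ that the definition of an elation fixes outright are the lines through both $P$ and $Q$; any two of these contain $\{P,Q\}$, so they never isolate $Q$, and in fact their common intersection is strictly larger than $\{Q\}$: in the classical plane $\HH n$ over $\Z/p^n\Z$ (the building of $\mathrm{PGL}(3,\mathbf{Q}_p)$), with $P=(1:0:0)$, $\ell=[0:0:1]$, $Q=(1:0:p^{n-1})$, the lines through $P$ and $Q$ all contain the $p$ points $(1:0:z)$, $z\in p^{n-1}\Z/p^n\Z$, so fixing all of them only confines $\alpha(Q)$ to that set. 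To pin $Q$ down you need a fixed line through $Q$ that misses $P$; by Lemma~\ref{lemma:free} any such line is near $P$, and the candidates you invoke are exactly the lines $(n-1)$-neighboring $\ell$ --- but that those are fixed is the dual half of the very statement being proved, so the argument is circular. Your fallback, a simultaneous induction on $n$, does not break the circle: the point half at level $n$ needs the line half at the same level $n$ (lines of $\HH n$ that are $(n-1)$-neighboring $\ell$), and dually; the inductive hypothesis at level $n-1$, applied to $\alpha^{\star_{n-1}}$, only fixes elements of $\HH{n-1}$ and fixes no line of $\HH n$. The ``two fixed lines'' picture is a projective-plane argument, and it is exactly for neighboring elements of a Hjelmslev plane that it breaks down; this is a missing idea, not bookkeeping.

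There is also an outright false claim in your base case: an elation of $\HH 2$ need not induce the identity on $\HH 1$. Nothing ``collapses'' at level $1$: $\pipi 1(P)$ and $\pipi 1(\ell)$ remain an incident point--line pair, and $\alpha^{\star_1}$ is an elation of the projective plane $\HH 1$ with that center and axis, which is in general non-trivial --- the elation $(x:y:z)\mapsto(x+tz:y:z)$ of the plane over $\Z/p^2\Z$ with $t$ a unit reduces to a non-trivial elation of $\mathrm{PG}(2,p)$. Distinguishing elations whose reduction is trivial is precisely why the paper introduces $\hh 1$-collineations right after this lemma, and Lemma~\ref{lemma:ball1}~(i) explicitly produces elations of $\HH n$ with non-trivial $\alpha^{\star_k}$. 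This error is less load-bearing than the circularity above, but together they mean both the base case and the inductive step need a genuinely different argument, along the lines of the finer incidence analysis carried out in \cite{VMVS}.
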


\begin{proof}
See~\cite{VMVS}*{Lemma~5}.
\end{proof}

An elation $\alpha$ of $\HH n$ such that $\alpha^{\star_{n-1}}$ is trivial is called a \textbf{$\hh 1$-collineation} of $\HH n$. (All elations of $\HH 1$ are $\hh 1$-collineations.) By definition, an elation $\alpha$ with axis $\ell$ and center $P$ fixes all points incident with $\ell$ and all lines incident with $P$. The following lemma states that when $\alpha$ is a $\hh 1$-collineation, it also fixes the points near $\ell$ and the lines near $P$.

\begin{lemma}\label{lemma:h1}
Let $\alpha$ be a $\hh 1$-collineation of $\HH n$ with axis $\ell$ and center $P$. Then $\alpha$ fixes all points (resp.\ lines) of $\HH n$ that are near $\ell$ (resp.\ $P$).
\end{lemma}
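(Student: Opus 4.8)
The plan is to establish the assertion for points near $\ell$; the assertion for lines near $P$ is then obtained by running the same argument in the dual projective Hjelmslev plane of $\HH n$, on which $\alpha$ acts as a $\hh 1$-collineation with centre $\ell$ and axis $P$. So let $Q$ be a point of $\HH n$ with $\pipi 1(Q)\ \II 1\ \pipi 1(\ell)$; we must show $\alpha(Q)=Q$. If $Q\ \II n\ \ell$ this is part of the definition of an elation, so assume $Q$ is near $\ell$ but not incident with it. We record two facts for use below. First, since $\alpha$ is a $\hh 1$-collineation, $\alpha^{\star_{n-1}}$ is trivial, so $\alpha(Q)$ is $(n-1)$-neighboring $Q$, and likewise $\alpha(m)$ is $(n-1)$-neighboring $m$ for every line $m$. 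Second, by Lemma~\ref{lemma:quasi-elation} together with the definition of an elation, $\alpha$ fixes every point incident with $\ell$, every line incident with $P$, every point $(n-1)$-neighboring $P$, and every line $(n-1)$-neighboring $\ell$.

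Next one would try to pin $Q$ down as an intersection of $\alpha$-fixed lines, and this is where the main obstacle lies. All the ``cheap'' fixed lines through $Q$ --- the joins $QS$ for $S\ \II n\ \ell$ not neighboring $Q$, and, when $Q$ neighbors $P$, the lines through $Q$ and $P$ --- have level-$1$ image $\pipi 1(\ell)$, hence are pairwise neighboring; their common intersection in $\HH n$ is not the point $Q$ but a sub-cluster of points neighboring $Q$, and one checks that intersecting it with the set of points $(n-1)$-neighboring $Q$ (the only extra constraint supplied by $\alpha^{\star_{n-1}}=\id$) still leaves a non-trivial family. The resolution is that one cannot avoid using the precise shape of $\alpha$ itself, not merely the list of lines it fixes. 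Following the analysis of collineations of projective Hjelmslev planes in~\cite{VMVS}, one coordinatizes $\HH n$ by a valued ternary ring with $\ell$ the standard axis and $P$ the standard centre, so that $\alpha$ becomes an ``infinitesimal shear'' $(x,y,z)\mapsto(x+az,\,y+bz,\,z)$; the hypothesis $\alpha^{\star_{n-1}}=\id$ then forces $a$ and $b$ to lie at depth at least $n-1$ in the maximal ideal $\mathfrak m$. The point $Q$, being near $\ell$ but not on it, has its coordinate transverse to $\ell$ (its $z$-coordinate) in $\mathfrak m$; hence $\alpha$ displaces $Q$ by an element of $\mathfrak m^{\,n-1}\cdot\mathfrak m\subseteq\mathfrak m^{\,n}=0$, i.e.\ $\alpha(Q)=Q$. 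The dual half follows by the same computation applied to a line near $P$.

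What remains is bookkeeping: verifying that an arbitrary $\hh 1$-collineation really does take this normal form (this is where both hypotheses, ``$\alpha$ is an elation'' and ``$\alpha^{\star_{n-1}}=\id$'', get used, via Lemma~\ref{lemma:quasi-elation} and the coordinatization) and that the $i$-neighbor relations on $\HH n$ correspond to the filtration of the coordinate ring by powers of $\mathfrak m$. Granting that, the vanishing $\mathfrak m^{\,n-1}\cdot\mathfrak m=0$ at level $n$ closes the proof; the case $n=1$ is trivial, since there ``near $\ell$'' simply means ``incident with $\ell$'' and an elation fixes its axis pointwise by definition.
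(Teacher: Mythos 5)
The first half of your write-up (the reduction by duality, the inventory of points and lines already known to be fixed, and the explanation of why intersecting the visible fixed lines through $Q$ cannot isolate $Q$) is sound. The genuine gap is the step you defer to ``bookkeeping'': the assertion that, after coordinatizing $\HH n$ by a valued ternary ring adapted to the flag $(P,\ell)$, the collineation $\alpha$ takes the shear form $(x,y,z)\mapsto(x+az,\,y+bz,\,z)$ with $a,b$ of depth at least $n-1$. That assertion is the entire content of the lemma, and it is not available in the present generality. The building $\Delta$ is arbitrary and possibly exotic, so $\HH n$ is coordinatized (in the spirit of \cite{HVM}) only by a \emph{planar ternary ring} with valuation, not by a ring: there is no associative or distributive multiplication, so the inclusion $\mathfrak m^{\,n-1}\cdot\mathfrak m\subseteq\mathfrak m^{\,n}=0$ on which your computation rests has no meaning, and the $i$-neighbor relations need not come from a multiplicative filtration. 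More fundamentally, a single elation with prescribed centre and axis has no reason to act ``linearly'' in a ternary-ring coordinatization; the shear normal form is what one has in the Desarguesian (Bruhat--Tits) case, i.e.\ exactly under the Moufang-type hypotheses that this lemma is a tool for establishing, so invoking it begs the question. As written, your argument proves the statement only for classical Hjelmslev planes, where it is not in doubt.

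For what it is worth, the paper does not reprove the statement: it quotes it as Lemma~14 of \cite{VMVS}, whose proof is synthetic --- it works directly with the incidence geometry of $\HH n$ and the ambient building, combining the lines fixed because they pass through $P$ with the lines $(n-1)$-neighboring $\ell$ that Lemma~\ref{lemma:quasi-elation} already shows to be fixed, and exploiting convexity of apartments in the manner of Lemma~\ref{lemma:segment}. If you want a self-contained proof, that incidence-geometric route is the one to reproduce; a coordinate computation cannot be made to work without first proving structural facts about $\HH n$ that are strictly stronger than the lemma itself.
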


\begin{proof}
See~\cite{VMVS}*{Lemma~14}.
\end{proof}

We then get the following result as a direct consequence.

\begin{lemma}\label{lemma:changeaxis}
Let $\alpha$ be a $\hh 1$-collineation of $\HH n$ with axis $\ell$ and center $P$. Then for each $\ell' \in \LL n$ neighboring $\ell$ and each $P' \in \PP n$ neighboring $P$, $\alpha$ is also a $\hh 1$-collineation with axis $\ell'$ and center $P'$.
\end{lemma}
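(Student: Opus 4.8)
The plan is to read this off directly from Lemma~\ref{lemma:h1}. The one feature to exploit is that the relation ``near'' is strictly coarser than ``incident with'', and that it is unaffected by replacing a point or a line by a neighboring one.

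First I would dispose of the trivial case $n = 1$: there $\pipi 1$ is the identity, so ``$\ell'$ neighboring $\ell$'' forces $\ell' = \ell$ and ``$P'$ neighboring $P$'' forces $P' = P$, and a $\hh 1$-collineation of $\HH 1$ is by convention just an elation, so there is nothing to prove. Assume then $n \geq 2$, and let $\alpha$ be a $\hh 1$-collineation of $\HH n$ with axis $\ell$ and center $P$. By definition $\alpha^{\star_{n-1}}$ is trivial, a property that makes no reference to the axis or the center. So, given $\ell' \in \LL n$ neighboring $\ell$ and $P' \in \PP n$ neighboring $P$ (with $P' \ \II n \ \ell'$, as is implicit in speaking of an axis $\ell'$ and a center $P'$), it remains only to check that $\alpha$ is an elation with axis $\ell'$ and center $P'$, i.e.\ that it fixes every point incident with $\ell'$ and every line incident with $P'$.

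For this, let $Q \in \PP n$ be incident with $\ell'$. Applying $\pipi 1$ and using $\pipi 1(\ell') = \pipi 1(\ell)$, I get $\pipi 1(Q) \ \II 1 \ \pipi 1(\ell)$, i.e.\ $Q$ is near $\ell$, so $\alpha$ fixes $Q$ by Lemma~\ref{lemma:h1}. Dually, if $m \in \LL n$ is incident with $P'$, then $\pipi 1(m) \ \II 1 \ \pipi 1(P') = \pipi 1(P)$, so $m$ is near $P$ and is fixed by $\alpha$, again by Lemma~\ref{lemma:h1}. Hence $\alpha$ is an elation with axis $\ell'$ and center $P'$, and together with the triviality of $\alpha^{\star_{n-1}}$ this says precisely that $\alpha$ is a $\hh 1$-collineation with axis $\ell'$ and center $P'$.

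There is essentially no obstacle here; the whole content is packaged inside Lemma~\ref{lemma:h1}. The only point worth a line is the observation that incidence of $Q$ with a mere \emph{neighbor} $\ell'$ of $\ell$ already forces $Q$ to be \emph{near} $\ell$ (and dually for $P$), so that the fixed sets furnished by Lemma~\ref{lemma:h1} are already large enough to conclude.
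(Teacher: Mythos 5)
Your proof is correct and follows the same route as the paper: both arguments reduce everything to Lemma~\ref{lemma:h1}, observing that points incident with $\ell'$ are already near $\ell$ (and dually lines incident with $P'$ are near $P$), while the triviality of $\alpha^{\star_{n-1}}$ is independent of the choice of axis and center. Your explicit treatment of $n=1$ and the remark about $P'\ \II n\ \ell'$ are harmless additions, not a different method.
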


\begin{proof}
By Lemma~\ref{lemma:h1}, $\alpha$ fixes all points (resp.\ lines) of $\HH n$ that are near $\ell$ (resp.\ $P$). Given $P'$ neighboring $P$ and $\ell'$ neighboring $\ell$, this is equivalent to saying that $\alpha$ fixes all points (resp.\ lines) that are near $\ell'$ (resp.\ $P'$). In particular, $\alpha$ fixes all points (resp.\ lines) incident with $\ell'$ (resp.\ $P'$), which means that $\alpha$ is an elation (and thus a $\hh 1$-collineation) with axis $\ell'$ and center $P'$.
\end{proof}

The following lemma also comes from \cite{VMVS}. For $n = 1$, this is a particular case of a well-known result of Tits \cite{Tits}*{Theorem~4.1.1}.

\begin{lemma}\label{lemma:411easy}
Let $\alpha$ be a non-trivial $\hh 1$-collineation of $\HH n$ with axis $\ell$ and center $P$. Then $\alpha$ does not fix any point (resp.\ line) of $\HH n$ which is not near $\ell$ (resp.\ $P$).
\end{lemma}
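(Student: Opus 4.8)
The plan is to argue by contradiction, exploiting Lemma~\ref{lemma:h1} together with the projection morphisms $\pipi i$ to reduce the statement to the level-$1$ case, where it is the classical fact of Tits that a non-trivial elation of a projective plane fixes no point off its axis and no line through a point other than its center. First I would suppose that $\alpha$ is a non-trivial $\hh 1$-collineation with axis $\ell$ and center $P$, but that $\alpha$ fixes some point $Q \in \PP n$ which is \emph{not} near $\ell$; the case of a fixed line not near $P$ is dual and would be handled the same way. Since $\alpha$ is a $\hh 1$-collineation, $\alpha^{\star_{n-1}}$ is trivial, so $\alpha$ fixes $\pipi{n-1}(Q)$ automatically; the content is all at the top level $n$. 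By Lemma~\ref{lemma:h1}, $\alpha$ fixes every point near $\ell$ and every line near $P$, and by Lemma~\ref{lemma:changeaxis} we are free to replace $\ell$ and $P$ by any neighbor without changing the situation.

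Next I would set up the geometric configuration at level $n$ around the fixed point $Q$. Because $Q$ is not near $\ell$, there is a unique line $m \in \LL n$ through $Q$ that is near $\ell$ (equivalently, $\pipi 1(m)$ is the line of $\HH 1$ through $\pipi 1(Q)$ meeting $\pipi 1(\ell)$ in a point), and this line $m$ is fixed by Lemma~\ref{lemma:h1}. Similarly, choose a point $R$ on $\ell$ not near $P$ and consider the line $QR$ (or rather its level-$n$ incarnation inside a common apartment): $\alpha$ fixes $Q$ and fixes $R$ (as $R$ is incident with $\ell$), hence fixes the line $QR$. Iterating this kind of ``two fixed points force a fixed line / two fixed lines force a fixed point'' reasoning — which is exactly the triangle argument used in the level-$1$ proof of Tits, now carried out in the Hjelmslev plane — I would produce a fixed point or fixed line that is genuinely moved by $\alpha$ at level $n$, contradicting $\alpha$ being a \emph{non-trivial} $\hh 1$-collineation. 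Concretely, the endgame should show that the fixed structure of $\alpha$ contains a point off $\ell$, a second such point, and then enough lines to pin down that $\alpha^{\star_n}$ fixes a full sub-configuration forcing $\alpha$ to be the identity.

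The main obstacle I anticipate is making the ``in a common apartment'' requirement of the incidence relation $\II n$ interact cleanly with the fixed configuration: unlike in an ordinary projective plane, two points of $\HH n$ need not lie on a common line, and a fixed point together with a fixed line need not determine a fixed point of intersection unless the relevant vertices sit in a common apartment. I would handle this by working inside a fixed apartment $A$ of $\Delta$ containing $O$, $P$ and (a representative of) $\ell$, using that $\alpha$, being induced by an automorphism fixing $O$, can be chosen to stabilize enough of $A$, and by invoking Lemma~\ref{lemma:changeaxis} to slide $\ell$ and $P$ so that the line $QR$ and the point $Q$ are simultaneously visible in $A$. Once everything is inside one apartment, incidence in $\HH n$ becomes as rigid as incidence in a Moufang-free way, and the classical Tits argument \cite{Tits}*{Theorem~4.1.1} applies verbatim at level $1$, with the $\hh 1$ hypothesis ensuring the discrepancy is concentrated at the top level where the contradiction is derived. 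I would also cross-check that the statement for fixed lines not near $P$ is genuinely dual under the point-line duality of $\HH n$, so that no separate argument is needed.
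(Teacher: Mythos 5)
The paper does not actually prove this lemma: its ``proof'' is the citation \cite{VMVS}*{Lemma~16~(iv)}, so there is no internal argument to compare yours with, and you are attempting something the paper deliberately outsources. Your overall route (Tits's central-collineation triangle argument transplanted to $\HH n$, with Lemma~\ref{lemma:h1} supplying the fixed elements near $\ell$ and near $P$, and a contradiction with non-triviality at the end) is a reasonable one, but as written it has a genuine gap exactly at the decisive step. Since $\alpha^{\star_{n-1}}$ is trivial, $\alpha$ can only move an element inside its $(n-1)$-neighbor class -- which is precisely where unique joins and meets of a Hjelmslev plane fail. The classical ``two fixed points give a fixed line, two fixed lines give a fixed point'' reasoning therefore only fixes elements in general position, and one must then bootstrap: first fix every point $X$ with $\pipi 1(X)$ off $\pipi 1(\ell)$ and off the level-$1$ line through $\pipi 1(P)$ and $\pipi 1(Q)$ (using the fixed meet of the line $QX$ with $\ell$ and the fixed line $PX$ through the center), then rerun the whole argument from one of the newly fixed points to sweep out the remaining neighbor classes, and finally fix all lines as joins of pairs of non-neighboring fixed points. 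None of this appears in your plan; the promised endgame is only asserted. In addition, two specific statements are wrong: ``there is a unique line $m$ through $Q$ that is near $\ell$'' is not meaningful under the paper's definitions (nearness is a point--line relation, and at level $1$ every line through $\pipi 1(Q)$ meets $\pipi 1(\ell)$), and ``produce a fixed point or fixed line that is genuinely moved by $\alpha$'' is self-contradictory as phrased.

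Your proposed handling of the main obstacle is also flawed. The argument needs the Hjelmslev axioms that two non-neighboring points of $\HH n$ lie on a unique common line and two non-neighboring lines meet in a unique point; for the building-derived geometry these facts rest on the apartment structure and are established only in \cite{VMVS}, not in this paper. Your remedy -- fix an apartment $A$ containing $O$, $P$, $\ell$ and ``choose $\alpha$ to stabilize enough of $A$'', sliding $P$ and $\ell$ via Lemma~\ref{lemma:changeaxis} -- does not work: the lemma concerns an arbitrary collineation of $\HH n$, not necessarily an element of $\PsiPsi n(O)$, so there is no building automorphism whose action on an apartment you may normalize (and even an automorphism fixing $O$ need not stabilize a prescribed apartment); moreover Lemma~\ref{lemma:changeaxis} only replaces $\ell$ and $P$ by neighboring elements and cannot bring an arbitrary fixed point $Q$ into a chosen apartment. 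To make the proof complete you should either quote the join/meet properties of $\HH n$ (from \cite{VMVS}) explicitly and then carry out the two-stage triangle argument sketched above, or simply follow the paper and cite \cite{VMVS}*{Lemma~16~(iv)}. Your remark that the line case follows by duality is fine once the point case is actually proved.
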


\begin{proof}
See \cite{VMVS}*{Lemma~16~(iv)}.
\end{proof}

From this lemma we can easily deduce the more general next result.

\begin{lemma}\label{lemma:free}
Let $\alpha$ be a non-trivial elation of $\HH n$ with axis $\ell$ and center $P$. Then $\alpha$ does not fix any point (resp.\ line) of $\HH n$ which is not near $\ell$ (resp.\ $P$). In particular, if $m \in \LL n$ is incident with $P$ but not neighboring $\ell$, then the group of all elations with axis $\ell$ and center $P$ acts freely on the points incident with $m$ but not neighboring $P$.
\end{lemma}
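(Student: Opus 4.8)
The plan is to deduce Lemma~\ref{lemma:free} from the already-established Lemma~\ref{lemma:411easy} by a bootstrapping (induction) argument that gradually ``lifts'' the non-fixing property from the $\hh 1$-collineation case to an arbitrary elation. Let $\alpha$ be a non-trivial elation of $\HH n$ with axis $\ell$ and center $P$, and let $k \in \{1, \ldots, n\}$ be maximal such that $\alpha^{\star_k}$ is trivial (with $k = 0$ if $\alpha^{\star_1}$ is already non-trivial; but then $\alpha$ itself is a non-trivial $\hh 1$-collineation of $\HH 1$ and Lemma~\ref{lemma:411easy} applies directly). Assuming $k \geq 1$, I would view $\alpha^{\star_{k+1}}$ as a collineation of $\HH{k+1}$ whose restriction to $\HH k$ is trivial: it is an elation of $\HH{k+1}$ (with axis $\pipi{k+1}(\ell)$ and center $\pipi{k+1}(P)$) killed by $\pipi k$, hence by definition a non-trivial $\hh 1$-collineation of $\HH{k+1}$. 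Lemma~\ref{lemma:411easy} then tells us that $\alpha^{\star_{k+1}}$ fixes no point (resp.\ line) of $\HH{k+1}$ that is not near $\pipi{k+1}(\ell)$ (resp.\ $\pipi{k+1}(P)$).

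Next I would transfer this back up to level $n$. If $Q \in \PP n$ is fixed by $\alpha$, then $\pipi{k+1}(Q)$ is fixed by $\alpha^{\star_{k+1}}$, so by the previous paragraph $\pipi{k+1}(Q)$ is near $\pipi{k+1}(\ell)$; unravelling the definition of ``near'' at level $k+1$, this means $\pipi 1(Q) \ \II 1 \ \pipi 1(\ell)$ inside $\HH 1$, which is exactly the statement that $Q$ is near $\ell$ in $\HH n$. (Here I am using that the ``near'' and ``$i$-near'' relations are compatible with the projections $\pipi i$, which is immediate from their definitions in terms of $\pipi 1$ and $\II 1$.) The symmetric argument handles fixed lines. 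This establishes the first assertion of the lemma.

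For the second assertion, fix $m \in \LL n$ incident with $P$ but not neighboring $\ell$, and let $E$ be the group of all elations with axis $\ell$ and center $P$. Since $E$ fixes $P$ and every line through $P$, it fixes $m$ (setwise) and hence acts on the set $S$ of points incident with $m$; I want to show this action is free on the subset $S_0 \subseteq S$ of points not neighboring $P$. Suppose $\beta \in E$ fixes some $Q_0 \in S_0$. If $\beta \neq \id$, then by the first part of the lemma $Q_0$ must be near $\ell$, i.e.\ $\pipi 1(Q_0) \ \II 1 \ \pipi 1(\ell)$. But $Q_0$ is incident with $m$, so $\pipi 1(Q_0) \ \II 1 \ \pipi 1(m)$ as well; in the projective plane $\HH 1$, the point $\pipi 1(Q_0)$ thus lies on both lines $\pipi 1(\ell)$ and $\pipi 1(m)$, while $\pipi 1(P) \ \II 1 \ \pipi 1(m)$ and $\pipi 1(P) \neq \pipi 1(Q_0)$ because $Q_0 \in S_0$. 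Since $m$ does not neighbor $\ell$ we have $\pipi 1(m) \neq \pipi 1(\ell)$, so these two distinct lines of $\HH 1$ meet in the unique point $\pipi 1(P)$ --- forcing $\pipi 1(Q_0) = \pipi 1(P)$, a contradiction. Hence $\beta = \id$, so the action of $E$ on $S_0$ is free.

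The routine points to double-check when writing this up in full are: (i) that $\alpha^{\star_{k+1}}$ really is an elation of $\HH{k+1}$ --- it fixes all points incident with $\pipi{k+1}(\ell)$ and all lines incident with $\pipi{k+1}(P)$ because $\alpha$ does so at level $n$ and these properties descend along $\pipi{k+1}$; and (ii) the bookkeeping that ``near $\ell$ at level $n$'' is equivalent to ``$\pipi{k+1}(\cdot)$ near $\pipi{k+1}(\ell)$ at level $k+1$'', which holds since both reduce to the incidence $\pipi 1(\cdot) \ \II 1 \ \pipi 1(\ell)$ in $\HH 1$. The only genuine subtlety --- and the step I expect to be the main obstacle --- is correctly identifying the maximal $k$ with $\alpha^{\star_k}$ trivial and confirming that $\alpha^{\star_{k+1}}$ then fits the definition of a $\hh 1$-collineation (an elation of $\HH{k+1}$ whose reduction mod $\pipi k$ vanishes); once that identification is in place, Lemma~\ref{lemma:411easy} does all the real work and the rest is the elementary projective-plane computation in the last paragraph.
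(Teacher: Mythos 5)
Your proposal is correct and follows essentially the same route as the paper: the paper proves the first assertion by induction on $n$, applying Lemma~\ref{lemma:411easy} as soon as the projection one level down is trivial and otherwise invoking the induction hypothesis, which is exactly your argument unrolled by passing directly to the maximal $k$ with $\alpha^{\star_k}$ trivial and viewing $\alpha^{\star_{k+1}}$ as a non-trivial $\hh 1$-collineation of $\HH{k+1}$. The ``in particular'' part, which you check via the two-distinct-lines-meet-in-$\pipi 1(P)$ computation in $\HH 1$, is left implicit in the paper, and your only slip is calling $\alpha$ (rather than $\alpha^{\star_1}$) a $\hh 1$-collineation of $\HH 1$ in the case $k=0$, which your general argument with $k+1=1$ already handles.
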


\begin{proof}
Let us prove it by induction on $n$. For $n = 1$, this is equivalent to Lemma~\ref{lemma:411easy}. Now assume the assertion is proved in $\HH{n-1}$ and let $\alpha$ be a non-trivial elation of $\HH n$ with axis $\ell$ and center $P$. It is clear that $\alpha^{\star_{n-1}}$ is an elation of $\HH{n-1}$, with axis $\pipi{n-1}(\ell)$ and center $\pipi{n-1}(P)$. If $\alpha^{\star_{n-1}}$ is trivial then $\alpha$ is a $\hh 1$-collineation of $\HH n$ and we can directly apply Lemma~\ref{lemma:411easy} to conclude. If on the contrary $\alpha^{\star_{n-1}}$ is not trivial then it is a non-trivial elation of $\HH{n-1}$ and the result follows from the induction hypothesis.
\end{proof}

We now explain what it means for $\HH n$ to be \textit{Moufang}. First fix $P \in \PP n$ and $\ell \in \LL n$ with $P \ \II n \ \ell$. Given $m \in \LL n$ incident with $P$ but not neighboring $\ell$, we say that $\HH n$ is \textbf{$(P, \ell)$-transitive} if the group of all elations with axis $\ell$ and center $P$ acts transitively on the points incident with $m$ but not neighboring $P$. In view of Lemma~\ref{lemma:free}, this condition does not depend on the choice for $m$ and the action is then automatically simply transitive. When $\HH n$ is $(P, \ell)$-transitive for all $P \in \PP n$ and all $\ell \in \LL n$ with $P \ \II n \ \ell$, we say that $\HH n$ is \textbf{Moufang}. For $n = 1$, this definition is of course equivalent to the definition of a Moufang projective plane.

Given $n \geq 1$ we say that $\Delta$ is \textbf{$n$-Moufang} if $\HH n(O)$ is Moufang for each vertex $O$ in $\Delta$. Being $n$-Moufang is clearly weaker than being $(n+1)$-Moufang. As the next lemma shows, if $\Delta$ is $n$-Moufang for each $n \geq 1$ then the projective plane $\Delta^\infty$ at infinity of $\Delta$ is Moufang, i.e.\ $\Delta$ is Bruhat--Tits. The proof of this lemma essentially comes from~\cite{VMVS2}*{\S5}.

\begin{lemma}\label{lemma:nMoufang}
Suppose that $\Delta$ is $n$-Moufang for each $n \geq 1$. Then the projective plane $\Delta^{\infty}$ is Moufang, i.e.\ $\Delta$ is Bruhat--Tits.
\end{lemma}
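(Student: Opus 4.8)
The plan is to show that $\Delta^\infty$ is Moufang by producing, for any root $\rho$ of $\Delta^\infty$, a non-trivial element of the root group $U_\rho$. Recall that a compact projective plane $\Delta^\infty$ is Moufang as soon as it is $(p,L)$-transitive for all incident flags $(p,L)$, and that the point–line flags of $\Delta^\infty$ correspond to pairs $(\xi, \ell)$ where $\xi$ is an end of $\Delta$ and $\ell$ a ``wall-type'' object; more convenient for us is the reformulation that it suffices to prove, for a chosen sector (or half-apartment) $S$ in $\Delta$, that the pointwise stabiliser of $S$ in $\Aut(\Delta)^+$ acts transitively on the apartments containing a fixed sub-sector. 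The key observation is that such an apartment, truncated at level $n$ around a base vertex $O$ lying on the wall bounding $S$, is exactly an ``ordinary flag'' of the Hjelmslev plane $\HH n(O)$ together with a point not neighbouring it, and that an element of $\Aut(\Delta)^+$ fixing $S$ induces on $\HH n(O)$ an elation with a prescribed axis $\ell^{(n)}$ and centre $P^{(n)}$ coming from the directions of $S$.

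Concretely, here is the order of the steps. First, fix an apartment $\Sigma$ of $\Delta$, a vertex $O \in \Sigma$, and a ``half-apartment'' / sector $S \subseteq \Sigma$ emanating from $O$ in a direction that, at infinity, corresponds to a flag $(\xi,\eta)$ of $\Delta^\infty$; let $\rho$ be the corresponding root of $\Delta^\infty$ and pick a third apartment $\Sigma'$ agreeing with $\Sigma$ along a smaller sector, i.e.\ a second end $\zeta$ to which we wish to move a fixed end $\zeta_0$ by a root automorphism. Second, for each $n \geq 1$ translate this data into $\HH n(O)$: the sector $S$ determines an incident pair $(P^{(n)}, \ell^{(n)})$ in $\HH n(O)$, and the ends $\zeta_0$, $\zeta$ determine points of $\HH n(O)$ incident with a fixed line $m^{(n)}$ through $P^{(n)}$ but not neighbouring $P^{(n)}$; compatibility with the projections $\pipi{i}$ means the $(P^{(n)}, \ell^{(n)})$ form a coherent tower. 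Third, invoke the $n$-Moufang hypothesis: $\HH n(O)$ is $(P^{(n)}, \ell^{(n)})$-transitive, so there is an elation $\alpha_n \in \PsiPsi n(O)$ with axis $\ell^{(n)}$ and centre $P^{(n)}$ sending $\zeta_0^{(n)}$ to $\zeta^{(n)}$; by Lemma~\ref{lemma:free} this $\alpha_n$ is unique. Fourth, check coherence: by uniqueness and the fact that $\alpha_{n+1}^{\star_n}$ is again an elation with the right axis, centre and effect at level $n$, we get $\alpha_{n+1}^{\star_n} = \alpha_n$, so the $\alpha_n$ assemble (as automorphisms of $\Delta$ fixing larger and larger balls compatibly) into a single automorphism $\alpha \in \Aut(\Delta)^+$. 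Fifth, pass to infinity: $\alpha$ fixes $\xi$ and $\eta$ and every chamber of $\Sigma^\infty$ ``on the $\rho$ side'', hence induces an element of the root group $U_\rho$ of $\Delta^\infty$, and it moves $\zeta_0$ to $\zeta$; since $\zeta$ was arbitrary this shows $U_\rho$ acts transitively, i.e.\ $\Delta^\infty$ is $(p,L)$-transitive for the flag corresponding to $\rho$. As $\rho$ ranges over all roots we conclude $\Delta^\infty$ is Moufang, which by definition means $\Delta$ is Bruhat--Tits.

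The main obstacle is the bookkeeping in the fourth step: one must verify that the tower $(\alpha_n)_n$ really does glue to a well-defined automorphism of $\Delta$ rather than merely a compatible family of collineations of the Hjelmslev planes at $O$. This requires knowing that a collineation of $\HH n(O)$ fixing the Moufang data extends to an automorphism of the ball $B(O,n)$ of $\Delta$ (so that $\PsiPsi n(O)$ is the ``right'' group), and that these extensions can be chosen compatibly so that their inverse limit is continuous for the topology of pointwise convergence; here one uses that $\Aut(\Delta)^+$ acts properly and that a coherent sequence of partial automorphisms extends to $\Delta$ by simple connectedness, together with Lemma~\ref{lemma:quasi-elation} to control how much of each ball is fixed. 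A secondary subtlety is making precise the dictionary between sectors/ends of $\Delta$ and flags/points of $\Delta^\infty$, and checking that ``transitivity of $U_\rho$ on the relevant ends'' is exactly $(p,L)$-transitivity of the plane at infinity — this is where the argument of~\cite{VMVS2}*{\S5} does the work, and I would follow it closely. Everything else (the translation into Hjelmslev data, the uniqueness from Lemma~\ref{lemma:free}, the final ``Moufang $\Rightarrow$ Bruhat--Tits'' step via~\cite{WeissAffine}) is essentially formal.
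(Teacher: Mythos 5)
Your overall strategy --- translate the data at infinity into an incident pair of the Hjelmslev planes $\HH n(O)$, use the $n$-Moufang hypothesis to get an elation at each level, use the simple transitivity coming from Lemma~\ref{lemma:free} to force coherence of the tower, and pass to the limit --- is the same as the paper's. But there is a genuine gap in your steps 3--5: you take the level-$n$ elations to lie in $\PsiPsi n(O)$, i.e.\ to be induced by automorphisms of $\Delta$ fixing $O$, and you then glue them into an element $\alpha \in \Aut(\Delta)^+$ which induces a root element of $\Delta^\infty$. The hypothesis that $\HH n(O)$ is Moufang only provides elations as abstract collineations of the Hjelmslev plane $\HH n(O)$; nothing in the statement (nor elsewhere in the paper) says these collineations are realized by automorphisms of $\Delta$, or even by automorphisms of the ball $B(O,n)$. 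You flag this yourself as ``the main obstacle'', but the appeal to properness and simple connectedness does not close it: simple connectedness lets you extend a coherent family of local isomorphisms of $\Delta$, whereas the missing step is precisely that a collineation of $\HH n(O)$ is such a local isomorphism in the first place. As written, step 4 would therefore fail under the stated hypotheses.

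The paper's proof shows that the lift is unnecessary. Moufangness of $\Delta^\infty$ is a statement about collineations of the plane at infinity, not about $\Aut(\Delta)$: the points and lines of $\Delta^\infty$ are represented by rays from $O$, i.e.\ by coherent sequences of elements of the planes $\HH n(O)$, so a tower $(\alpha_n)$ with $\alpha_n^{\star_k}=\alpha_k$ (obtained exactly as you and the paper do, from the freeness in Lemma~\ref{lemma:free}) directly defines an elation of $\Delta^\infty$ with axis $\ell^\infty$ and center $P^\infty$ sending $Q^\infty$ to $R^\infty$; no element of $\Aut(\Delta)^+$ is produced or needed. One further small point the paper takes care of and you do not: the base vertex $O$ is chosen on the chosen apartment so that the open rays from $O$ towards $P^\infty$ and towards the target point $R^\infty$ are disjoint, which guarantees that at every level the projections of $Q^\infty$ and $R^\infty$ are incident with $\pipi n(m^\infty)$ and not neighboring $\pipi n(P^\infty)$, so that the $(P,\ell)$-transitivity of $\HH n(O)$ actually applies and the uniqueness argument makes sense.
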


\begin{proof}
Consider $\ell^\infty$ and $m^\infty$ two lines in $\Delta^\infty$, and denote by $P^\infty$ the point of $\Delta^\infty$ incident to $\ell^\infty$ and $m^\infty$. We want to show that $\Delta^\infty$ is $(P^\infty,\ell^\infty)$-transitive, i.e.\ that the group of all elations of $\Delta^\infty$ with axis $\ell^\infty$ and center $P^\infty$ acts transitively on the points incident with $m^\infty$ but different from $P^\infty$. Consider $Q^\infty$ and $R^\infty$ two points incident with $m^\infty$, different from $P^\infty$. Let $A^\infty$ be some apartment in $\Delta^\infty$ containing $\ell^\infty$, $P^\infty$, $m^\infty$ and $Q^\infty$. There exists an apartement $A$ in $\Delta$ whose apartement at infinity is $A^\infty$. Now choose a vertex $O$ in $A$ so that the two open rays from $O$ to $P^\infty$ and $R^\infty$ are disjoint. For each $n \geq 1$, $\HH n(O)$ is Moufang, so there exists an elation $\alpha_n$ of $\HH n(O)$ with axis $\pipi n(\ell^\infty)$ and center $\pipi n(P^\infty)$, sending $\pipi n(Q^\infty)$ to $\pipi n(R^\infty)$ (where $\pipi n(x^\infty)$ is the point or line of $\HH n(O)$ represented by the ray from $O$ to $x^\infty$). Lemma~\ref{lemma:free} implies that $\alpha_n^{\star_{k}} = \alpha_k$ for each $1 \leq k \leq n$. We can thus consider the inverse limit $\alpha$ of the sequence $(\alpha_n)$, which is an elation of $\Delta^\infty$ with axis $\ell^\infty$ and center $P^\infty$, sending $Q^\infty$ to $R^\infty$.
\end{proof}

Finally, for our future needs we give a name to some vertices of $\Delta$. Given $P \in \PP n(O)$ and $\ell \in \LL n(O)$ with $P \ \II n \ \ell$ (where $O$ is a vertex of $\Delta$), the consecutive vertices of the geodesic from $P$ to $\ell$ in $\Delta$ are denoted by $P = v_0(P, \ell), v_1(P, \ell), \ldots, v_n(P, \ell) = \ell$.

\section{Panel-transitive \texorpdfstring{$\tilde{A}_2$-buildings}{triangle buildings}}

Given $n \geq 1$, we say that $\Aut(\Delta)$ (or $\Aut(\Delta)^+$) is \textbf{$n$-discrete} if there exists a vertex $O$ in $\Delta$ such that the only element of $\Aut(\Delta)$ fixing $O$ and acting trivially on $\HH n(O)$ is the identity. Being $n$-discrete is clearly stronger than being $(n+1)$-discrete. Then $\Aut(\Delta)$ (or $\Aut(\Delta)^+$) is \textbf{non-$n$-discrete} if for each vertex $O$ in $\Delta$ there exists a non-trivial element of $\Aut(\Delta)$ fixing $O$ and acting trivially on $\HH n(O)$. Remark that $\Aut(\Delta)$ is non-discrete if and only if it is non-$n$-discrete for all $n \geq 1$. In this section we prove Theorem~\ref{maintheorem:A'} which is thus a more precise version of Theorem~\ref{maintheorem:A} (in view of Lemma~\ref{lemma:nMoufang}).

\begin{primetheorem}{maintheorem:A}\label{maintheorem:A'}
Let $\Delta$ be a locally finite thick $\tilde{A}_2$-building and suppose that $\Aut(\Delta)^+$ is transitive on panels of each type. Then for each $n \geq 1$, at least one of the following assertions holds:
\begin{enumerate}[(a)]
\item $\Delta$ is $n$-Moufang, or
\item $\Aut(\Delta)$ is $(6n+2)$-discrete.
\end{enumerate}
\end{primetheorem}

In the proof, we assume that $\Aut(\Delta)^+$ is transitive on panels of each type and non-$(6n+2)$-discrete, and aim to show that $\HH n(O)$ is Moufang for each vertex $O$ in $\Delta$.

\subsection{Constructing \texorpdfstring{$\hh 1$-collineations}{h1-collineations}}

In this first subsection, we observe that the non-$(n+3)$-discreteness of $\Aut(\Delta)$ together with its transitivity on vertices of each type implies the existence of non-trivial $\hh 1$-collineations in $\PsiPsi n(O)$ for each vertex $O$ in $\Delta$.
We start with an easy result valid in any $\tilde{A}_2$-building $\Delta$.

\begin{lemma}\label{lemma:segment}
Let $v_0, \ldots, v_k$ ($k \geq 1$) be consecutive vertices of a wall of $\Delta$. Consider vertices $w_0, \ldots, w_{k-1}$ with $w_i$ adjacent to $v_i$, $v_{i+1}$ and $w_{i-1}$ (if $i \geq 1$) for each $i \in \{0, \ldots, k-1\}$. Similarly, consider vertices $w'_0, \ldots, w'_{k-1}$ with $w'_i$ adjacent to $v_i$, $v_{i+1}$ and $w'_{i-1}$ (if $i \geq 1$) for each $i \in \{0, \ldots, k-1\}$. If $g \in \Aut(\Delta)^+$ fixes $v_0, \ldots, v_k$ and if $g(w_0) = w'_0$, then $g(w_i) = w'_i$ for each $i \in \{0, \ldots, k-1\}$.
\end{lemma}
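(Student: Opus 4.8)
The plan is to prove, by induction on $i \in \{0, 1, \dots, k-1\}$, that $g(w_i) = w'_i$; the base case $i = 0$ is exactly the hypothesis $g(w_0) = w'_0$. For the inductive step I would fix $i \geq 1$, assume $g(w_{i-1}) = w'_{i-1}$, and work entirely inside the residue of $v_i$. This residue is a projective plane $\Pi_{v_i}$ whose objects are the vertices of $\Delta$ adjacent to $v_i$ (points of one type, lines of the other), incidence being adjacency in $\Delta$; since $g$ fixes $v_i$ it induces a collineation of $\Pi_{v_i}$, and since $g$ fixes $v_{i+1}$ this collineation fixes the object $v_{i+1}$.

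The heart of the matter is that $w_i$ is pinned down inside $\Pi_{v_i}$. The vertices $v_{i+1}$, $w_{i-1}$, $w_i$ are all adjacent to $v_i$, hence are objects of $\Pi_{v_i}$, and by hypothesis $w_i$ is incident with both $v_{i+1}$ and $w_{i-1}$. Here is the only place the wall hypothesis is used: any three consecutive vertices of a wall carry the three distinct types, so $v_{i-1}$, $v_i$, $v_{i+1}$ have pairwise distinct types; consequently $w_{i-1}$, being adjacent to the distinct-type vertices $v_{i-1}$ and $v_i$, carries the third type --- which is exactly the type of $v_{i+1}$. Thus $v_{i+1}$ and $w_{i-1}$ are two objects of $\Pi_{v_i}$ of the \emph{same kind} (both points, or both lines); they are moreover \emph{distinct}, since $w_{i-1}$ is adjacent to $v_{i-1}$ while $v_{i+1}$ is not (the wall being a geodesic, $v_{i-1}$ and $v_{i+1}$ lie at distance $2$ in $\Delta$). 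Hence $v_{i+1}$ and $w_{i-1}$ admit a unique common incident object in $\Pi_{v_i}$ --- their join if they are points, their intersection if they are lines --- and $w_i$ is that object. The same argument applied to the primed data identifies $w'_i$ as the unique common incident object of $v_{i+1}$ and $w'_{i-1}$.

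To finish, since $g$ acts on $\Pi_{v_i}$ as a collineation fixing $v_{i+1}$ and sending $w_{i-1}$ to $w'_{i-1}$, it sends the unique common incident object of $v_{i+1}$ and $w_{i-1}$ to the unique common incident object of $v_{i+1}$ and $w'_{i-1}$; that is, $g(w_i) = w'_i$, which completes the induction. I do not anticipate a genuine obstacle: this is a soft, purely local argument. The only point demanding a little care is the type bookkeeping that makes $v_{i+1}$ and $w_{i-1}$ two distinct objects of the same kind in $\Pi_{v_i}$ --- without which ``the unique common incident object'' would not be defined --- and that is precisely what the hypothesis that the $v_j$ lie on a wall delivers.
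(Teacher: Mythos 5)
Your proof is correct and is essentially the paper's argument made explicit: the paper's one-line justification (``$g$ fixes $v_{i-1}$, $v_i$, $v_{i+1}$, hence sends $w_{i-1}$ to $w'_{i-1}$ iff it sends $w_i$ to $w'_i$'') has as its content precisely the uniqueness of the join/meet of $v_{i+1}$ and $w_{i-1}$ in the residue plane $\Pi_{v_i}$, which you spell out together with the necessary type bookkeeping and the distinctness check. Nothing is missing.
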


\begin{proof}
For each $i \in \{1, \ldots, k-1\}$, the fact that $g$ fixes $v_{i-1}$, $v_i$ and $v_{i+1}$ clearly implies that $g$ sends $w_{i-1}$ to $w'_{i-1}$ if and only if $g$ sends $w_i$ to $w'_i$ (see Figure~\ref{picture:segment} for an illustration). The conclusion then follows immediately.
\begin{figure}[h!]
\centering
\begin{pspicture*}(-2.9,-1.5)(4.00,1.4)
\psset{unit=1.2cm}

\psline(-2,0)(3,0)
\psline(-1.5,0.866)(2.5,0.866)
\psline(-1.5,-0.866)(2.5,-0.866)

\psline(-2,0)(-1.5,0.866)
\psline(-1.5,0.866)(-1,0)
\psline(-1,0)(-0.5,0.866)
\psline(-0.5,0.866)(0,0)
\psline(0,0)(0.5,0.866)
\psline(0.5,0.866)(1,0)
\psline(1,0)(1.5,0.866)
\psline(1.5,0.866)(2,0)
\psline(2,0)(2.5,0.866)
\psline(2.5,0.866)(3,0)

\psline(-2,0)(-1.5,-0.866)
\psline(-1.5,-0.866)(-1,0)
\psline(-1,0)(-0.5,-0.866)
\psline(-0.5,-0.866)(0,0)
\psline(0,0)(0.5,-0.866)
\psline(0.5,-0.866)(1,0)
\psline(1,0)(1.5,-0.866)
\psline(1.5,-0.866)(2,0)
\psline(2,0)(2.5,-0.866)
\psline(2.5,-0.866)(3,0)

\psdot[linewidth=0.03](-2,0)
\rput(-2.22,0.115){$v_0$}
\psdot[linewidth=0.03](-1,0)
\rput(-1.28,0.12){$v_1$}
\psdot[linewidth=0.03](2,0)
\rput(2.47,0.115){$v_{k-1}$}
\psdot[linewidth=0.03](3,0)
\rput(3.2,0.1){$v_k$}
\psdot[linewidth=0.03](-1.5,0.866)
\rput(-1.55,1.05){$w_0$}
\psdot[linewidth=0.03](2.5,0.866)
\rput(2.55,1.05){$w_{k-1}$}
\psdot[linewidth=0.03](-1.5,-0.866)
\rput(-1.53,-1.08){$w'_0$}
\psdot[linewidth=0.03](2.5,-0.866)
\rput(2.68,-1.08){$w'_{k-1}$}
\end{pspicture*}
\caption{Illustration of Lemma~\ref{lemma:segment}.}\label{picture:segment}
\end{figure}
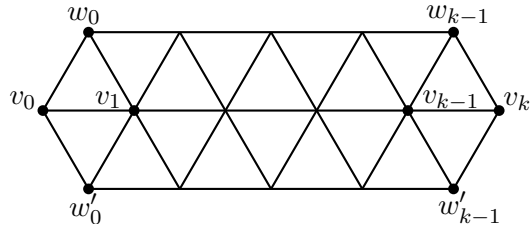
\end{proof}

This enables us to show the following.

\begin{lemma}\label{lemma:triviality}
Let $O$ be a vertex of $\Delta$ and let $\alpha \in \PsiPsi n(O)$ ($n \geq 2$) be a non-trivial collineation such that $\alpha^{\star_{n-1}}$ is trivial. Then there exists $P \in \PP n(O)$ and $\ell \in \LL n(O)$ with $P\ \II n\ \ell$ and such that $\alpha$ does not fix $v_1(P, \ell), v_2(P, \ell), \ldots, v_{n-1}(P, \ell)$.
\end{lemma}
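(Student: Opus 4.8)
The plan is to argue by contradiction: I assume that for \emph{every} incident pair $(P,\ell)\in\PP n(O)\times\LL n(O)$ the collineation $\alpha$ fixes all of $v_1(P,\ell),\dots,v_{n-1}(P,\ell)$, and I deduce that $\alpha$ acts trivially on $\HH n(O)$, contradicting the hypothesis. Since $\alpha^{\star_{n-1}}$ is trivial, $\alpha$ fixes every vertex appearing in some $\HH i(O)$ with $i\le n-1$; in particular, for any point $(p_1,\dots,p_n)\in\PP n(O)$ it fixes $O,p_1,\dots,p_{n-1}$, and dually for lines. As $\alpha$ is non-trivial on $\HH n(O)$ it moves a point or a line, and since both the statement to be proved and the standing assumption are invariant under the point--line duality of $\HH n(O)$, I may assume $\alpha$ moves a point $P=(p_1,\dots,p_n)$; thus $\alpha$ fixes $O,p_1,\dots,p_{n-1}$ while $\alpha(p_n)\neq p_n$.

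Next I produce two lines of $\LL n(O)$ incident with $P$ with distinct ``first step away from $p_n$''. Pick any $\ell_0\in\LL n(O)$ incident with $P$ and an apartment $A$ of $\Delta$ containing $O,p_1,\dots,p_n$ together with all vertices of $\ell_0$. In $A$ this data forms the standard triangular configuration: $O$ is one corner, the consecutive vertices $v_0=p_n,v_1,\dots,v_n$ of the geodesic $[p_n,\ell_0]$ form the opposite side, and $\{p_{n-1},p_n,v_1(P,\ell_0)\}$ is a chamber (the corner chamber at $p_n$). Let $\sigma$ be the reflection of $A$ in the wall $w$ through $O,p_1,\dots,p_n$. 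Being a type-preserving automorphism of $A$ that fixes $O,p_1,\dots,p_n$ and carries walls to walls, $\sigma$ sends $\ell_0$ to a line $\ell':=\sigma(\ell_0)\in\LL n(O)$ still incident with $P$, and it maps $[p_n,\ell_0]$ isometrically onto $[p_n,\ell']$, so $v_i(P,\ell')=\sigma(v_i(P,\ell_0))$ for all $i$. Since $v_1(P,\ell_0)$ is not a vertex of $w$, the reflection $\sigma$ moves it, so $v_1(P,\ell')\neq v_1(P,\ell_0)$; and both of these vertices are adjacent to $p_{n-1}$ and to $p_n$, being corner vertices at $p_n$.

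Now I obtain the contradiction. By assumption $\alpha$ fixes $v_1(P,\ell_0)$, $v_1(P,\ell')$ and $p_{n-1}$, so $\alpha(p_n)$ is adjacent to all three of $p_{n-1},v_1(P,\ell_0),v_1(P,\ell')$, exactly as $p_n$ is; in particular $\alpha(p_n)$ lies in the residue $\HH 1(p_{n-1})$, a projective plane. In that plane $v_1(P,\ell_0)$ and $v_1(P,\ell')$ are two \emph{distinct} elements, each incident with $p_n$, hence both of the type opposite to that of $p_n$; therefore the unique element of $\HH 1(p_{n-1})$ incident with both of them is their intersection, namely $p_n$ itself. But $\alpha(p_n)$ is also incident (in $\HH 1(p_{n-1})$) with $v_1(P,\ell_0)$ and $v_1(P,\ell')$, so $\alpha(p_n)=p_n$, a contradiction. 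Hence $\alpha$ must move one of $v_1(P,\ell_0),\dots,v_{n-1}(P,\ell_0)$ or one of $v_1(P,\ell'),\dots,v_{n-1}(P,\ell')$, and in either case this yields a point $P$ and an incident line as desired.

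The step I expect to require the most care is the geometric bookkeeping inside the apartment $A$: checking that $\sigma(\ell_0)$ genuinely lies in $\LL n(O)$ and is incident with $P$, that $\{p_{n-1},p_n,v_1(P,\ell)\}$ is a chamber, and that the $v_i$ transform correctly under $\sigma$. All of this is routine given the explicit description of $\HH n(O)$ and of the vertices $v_i(P,\ell)$ recalled in \S\ref{section:preliminaries} (following~\cite{VMVS}), but it is the only non-formal ingredient; everything else reduces to the one-line incidence argument in a projective plane above.
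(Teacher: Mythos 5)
You follow essentially the same strategy as the paper: argue by contradiction, pick a point $P=(p_1,\dots,p_n)\in\PP n(O)$ moved by $\alpha$ (using duality to reduce to this case), produce two lines of $\LL n(O)$ incident with $P$ whose corner vertices $v_1(P,\cdot)$ at $P$ are distinct, and observe that in the projective plane $\HH 1(p_{n-1})$ these two fixed elements through $p_n$ pin down $p_n$, forcing $\alpha(P)=P$. Your construction of the second line by reflecting in the wall through $O,p_1,\dots,p_n$ is a legitimate variant of the paper's choice of two non-neighboring lines through $P$, and it has the small advantage of making the distinctness $v_1(P,\ell')\neq v_1(P,\ell_0)$ immediate; the apartment bookkeeping you defer (that $\sigma(\ell_0)\in\LL n(O)$ is incident with $P$, that $\{p_{n-1},p_n,v_1(P,\ell_0)\}$ is the corner chamber, and that the $v_i$ transform equivariantly) is indeed routine.

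The one genuine gap is in your final sentence. Your contradiction hypothesis is that $\alpha$ fixes \emph{all} of $v_1(P,\ell),\dots,v_{n-1}(P,\ell)$ for every incident pair, so its negation only yields a pair for which $\alpha$ moves \emph{at least one} of these vertices (in your argument, $v_1$), whereas the lemma asserts the existence of a pair for which $\alpha$ moves \emph{every one} of them; for $n\ge 3$ these are not the same statement. The missing ingredient is the all-or-nothing dichotomy that the paper records as the opening line of its proof: since $\alpha^{\star_{n-1}}$ is trivial, Lemma~\ref{lemma:segment}, applied along the segment one level below the side from $P$ to $\ell$, shows that $\alpha$ fixes either all of $v_1(P,\ell),\dots,v_{n-1}(P,\ell)$ or none of them. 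Inserting that observation --- either at the start, so that the contradiction hypothesis may be taken in the a priori weaker form ``$\alpha$ fixes $v_1(P,\ell)$ for every incident pair'', or at the end, to upgrade ``$\alpha$ moves $v_1(P,\ell_0)$ or $v_1(P,\ell')$'' to the full conclusion for that pair --- completes your argument.
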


\begin{proof}
For any $P \in \PP n(O)$ and $\ell \in \LL n(O)$ with $P\ \II n\ \ell$ we know by Lemma~\ref{lemma:segment} (and since $\alpha^{\star_{n-1}}$ is trivial) that either all vertices $v_1(P, \ell), \ldots, v_{n-1}(P, \ell)$ are fixed by $\alpha$ or none of them is fixed by $\alpha$.

We therefore proceed by contradiction, assuming that for all such $P$ and $\ell$, all the vertices $v_1(P, \ell), \ldots, v_{n-1}(P, \ell)$ are fixed by $\alpha$. We show that, in this case, $\alpha$ is trivial (which gives the contradiction).

Consider any point $P \in \PP n(O)$ and choose two lines $\ell, \ell' \in \LL n(O)$ incident with $P$ and such that $\ell$ and $\ell'$ are not neighboring. Then $\alpha$ fixes $v_1(P, \ell)$, $v_1(P,\ell')$ and $\pipi{n-1}(P)$, so it must fix $P$. This can be done for any choice of a point $P \in \PP n(O)$, and similarly for any choice of a line $\ell \in \LL n(O)$, so $\alpha$ is trivial.
\end{proof}

\begin{proposition}\label{proposition:h1col}
Let $n \geq 1$ and suppose that $\Aut(\Delta)$ is non-$(n+3)$-discrete and transitive on vertices of each type. Then for each vertex $O$ in $\Delta$, there exists a non-trivial $\hh 1$-collineation in $\PsiPsi n(O)$.
\end{proposition}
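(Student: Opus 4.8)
The plan is to feed the automorphism supplied by non-$(n+3)$-discreteness into a change of base point: such an automorphism fixes a large region, and after re-centring at a well-chosen vertex on the boundary of that region the induced collineation drops to level $n$ and is forced to be a $\hh 1$-collineation. Since $\Aut(\Delta)^+$ is transitive on vertices of each type, it suffices to produce one non-trivial $\hh 1$-collineation in $\PsiPsi n(O')$ for a single vertex $O'$ of each of the three types, the target $O'$ being allowed to depend on the construction (and, as will be clear, to be of prescribed type); the statement for an arbitrary $O$ then follows by conjugating by an element of $\Aut(\Delta)^+$ carrying $O'$ to $O$.

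So fix a vertex $O$. By non-$(n+3)$-discreteness choose a non-trivial $g \in \Aut(\Delta)$ fixing $O$ and acting trivially on $\HH{n+3}(O)$; as $g$ fixes $O$ together with every point and every line of the projective plane $\HH 1(O)$ it preserves types, so $g \in \Aut(\Delta)^+$. Let $m$ be smallest with $g^{\star_m} \in \PsiPsi m(O)$ non-trivial — this exists, for otherwise $g$ would act trivially on $\HH k(O)$ for all $k$ and hence be the identity — and note that $m \geq n+4$. Put $\alpha := g^{\star_m}$, so $\alpha \neq \id$ and $\alpha^{\star_{m-1}} = \id$; Lemma~\ref{lemma:triviality} then provides $P \in \PP m(O)$ and $\ell \in \LL m(O)$ with $P\ \II m\ \ell$ such that $g$ moves each of $v_1(P,\ell), \ldots, v_{m-1}(P,\ell)$. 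On the other hand $g$ fixes every point and line of $\HH{m-1}(O)$; applying Lemma~\ref{lemma:segment} to the walls through $O$, whose first off-wall vertices lie in $\HH 1(O)$ and are therefore fixed by $g$, one sees that $g$ actually fixes the entire strip of vertices running alongside each such wall, out to depth $m-1$.

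Now re-centre. Working inside an apartment containing $O$, $P$ and $\ell$, these three vertices form a thin triangular configuration of side $m$ with the moved wall $[P,\ell]$ opposite $O$. I choose a vertex $O'$ at distance exactly $m-n$ from $O$, of the required type, positioned so that: (a) one of the moved vertices $v_j(P,\ell)$ lies at depth $n$ from $O'$ along a wall, hence becomes a point or line of $\HH n(O')$ that $g$ still moves, so $g^{\star_n}_{O'} \neq \id$; (b) every wall-vertex of $O'$ at depth $\leq n-1$ already lies in the region fixed by $g$, so $(g^{\star_n}_{O'})^{\star_{n-1}} = \id$; and (c) some line $\ell'$ and some point $P'$ of $\HH n(O')$ with $P'\ \II n\ \ell'$ have all their incident points, respectively lines, inside the region fixed by $g$, so that $g^{\star_n}_{O'}$ is an elation of $\HH n(O')$ with axis $\ell'$ and centre $P'$. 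Conditions (a)--(c) say precisely that $g^{\star_n}_{O'}$ is a non-trivial $\hh 1$-collineation in $\PsiPsi n(O')$, and conjugating it to the desired vertex of its type finishes the proof.

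The delicate point, and the one I expect to be the main obstacle, is the simultaneous realisation of (a), (b) and (c). The distance $d(O,O') = m-n$ is forced: any smaller, and $g$ would fix all of $B(O',n)$ and so $g^{\star_n}_{O'}$ would be trivial; thus there is no slack in the radius, and everything depends on fitting the moved wall, the depth-$(n-1)$ neighbourhood of $O'$, and the axis–pencil pair of (c) into the side-$m$ configuration without the fixed region of $g$ swallowing the moved part — which is exactly what the three spare levels in the hypothesis are there to guarantee. Within this, the subtle part of (c) is that requiring a whole line to be fixed pointwise and a whole pencil to be fixed is genuinely needed, since an arbitrary element of $\Ker(\PsiPsi n(O') \to \PsiPsi{n-1}(O'))$ need not be an elation at all; it is only the extra fixed points inherited from the large fixed region of $g$, propagated via Lemma~\ref{lemma:segment}, that force $g^{\star_n}_{O'}$ to be one.
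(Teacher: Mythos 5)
Your overall strategy --- extract from non-$(n+3)$-discreteness a non-trivial $g$ that is trivial on $\HH{m-1}(O)$ but not on $\HH{m}(O)$, locate via Lemma~\ref{lemma:triviality} a flag $(P,\ell)$ of $\HH{m}(O)$ all of whose intermediate vertices $v_1(P,\ell),\dots,v_{m-1}(P,\ell)$ are moved, and then re-centre at a vertex $O'$ at distance $m-n$ from $O$ --- is exactly the paper's. But the argument stops at the decisive step: you never exhibit the vertex $O'$; you only list the three properties (a), (b), (c) it would need and then concede that their ``simultaneous realisation'' is ``the main obstacle''. Since (b) is automatic once $d(O,O')=m-n$ (as you note), while (a) and (c) are precisely what makes the induced collineation a non-trivial $\hh 1$-collineation, what is left unproved is the entire content of the proposition; ``positioned so that (a)--(c) hold'' assumes the conclusion.

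The missing idea is \emph{where} to put $O'$: not on the geodesic from $O$ towards $P$ or $\ell$, but on the wall joining $X=\pipi{m-n}(P)$ to $Y=\pipi{m-n}(\ell)$, \emph{strictly between} $X$ and $Y$ (this wall is parallel to the moved wall $[P,\ell]$ and lies $n$ steps below it). If $O'$ is the vertex of that wall at distance $i'$ from $X$, with $1\le i'\le m-n-1$, then the two straight rays from $O'$ pointing towards $[P,\ell]$ reach exactly $v_{i'}(P,\ell)$ and $v_{i'+n}(P,\ell)$ after $n$ steps; these form an incident point--line pair of $\HH{n}(O')$ and both are moved by $g$ because $1\le i'$ and $i'+n\le m-1$, which gives (a). For (c), take as axis and centre the line and point of $\HH{n}(O')$ obtained by going $n$ steps in the two opposite straight directions, away from $[P,\ell]$: any point incident with that axis (resp.\ line incident with that centre) begins with a vertex adjacent to a vertex at distance $m-n-1$ from $O$, so all its vertices lie in $B(O,m-1)$, which $g$ fixes pointwise by the propagation you already set up via Lemma~\ref{lemma:segment}. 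Finally, since $m-n\ge 4$ the open segment between $X$ and $Y$ contains at least three consecutive vertices, hence one of each type, which supplies the three base points your transitivity reduction requires. Without this explicit choice and verification the proof is incomplete.
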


\begin{proof}
In view of the transitivity of $\Aut(\Delta)$ on vertices of each type, it suffices to find three vertices $O_0$, $O_1$, $O_2$ of types $0$, $1$ and $2$ such that $\PsiPsi n(O_i)$ contains a non-trivial $\hh 1$-collineation for each $i \in \{0,1,2\}$.

Fix some vertex $O$ in $\Delta$. The non-$(n+3)$-discreteness of $\Aut(\Delta)$ implies that there exists $N \geq n+4$ such that $\PsiPsi N(O)$ contains a non-trivial collineation $\alpha$ with $\alpha^{\star_{N-1}}$ trivial. By Lemma~\ref{lemma:triviality}, there exists $P \in \PP N(O)$ and $\ell \in \LL N(O)$ with $P\ \II N \ \ell$ and such that none of the vertices $v_1(P,\ell), \ldots, v_{N-1}(P, \ell)$ is fixed by $\alpha$. Now write $X = \pipi{N-n}(P)$ and $Y = \pipi{N-n}(\ell)$ (see Figure~\ref{picture:h1col}). As $N-n \geq 4$, the geodesic from $X$ to $Y$ in $\Delta$ contains at least three vertices different from $X$ and $Y$. Since three consecutive vertices in such a configuration always have the three different types, there exist $O_0$, $O_1$ and $O_2$ with types $0$, $1$ and $2$ and strictly between $X$ and $Y$. For each $i \in \{0,1,2\}$, the action induced by $\alpha$ on $\HH n(O_i)$ is non-trivial, because $\alpha$ acts non-trivially on $v_1(P,\ell), \ldots, v_{N-1}(P,\ell)$. There remains to check that it is a $\hh 1$-collineation of $\HH n(O_i)$, but this is a consequence of the fact that $\alpha^{\star_{N-1}}$ is trivial.
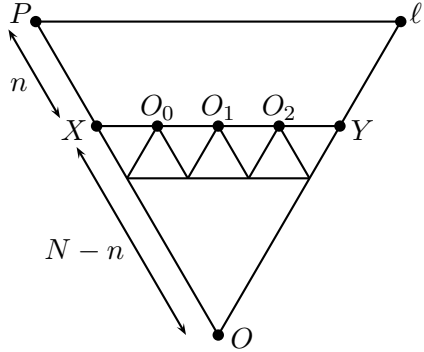
\begin{figure}[h!]
\centering
\begin{pspicture*}(-4,-0.2)(4,4.5)
\psset{unit=0.8cm}

\psline(0,0)(-3,5.196)
\psline(0,0)(3,5.196)
\psline(-3,5.196)(3,5.196)
\psline(-2,3.464)(2,3.464)

\psline(-1.5,2.598)(-1,3.464)
\psline(-0.5,2.598)(-1,3.464)
\psline(-0.5,2.598)(0,3.464)
\psline(0.5,2.598)(0,3.464)
\psline(0.5,2.598)(1,3.464)
\psline(1.5,2.598)(1,3.464)
\psline(-1.5,2.598)(1.5,2.598)

\psdot[linewidth=0.05](0,0)
\rput(0.39,-0.05){$O$}
\psdot[linewidth=0.05](-2,3.464)
\rput(-2.37,3.4){$X$}
\psdot[linewidth=0.05](-1,3.464)
\rput(-1,3.77){$O_0$}
\psdot[linewidth=0.05](0,3.464)
\rput(0,3.77){$O_1$}
\psdot[linewidth=0.05](1,3.464)
\rput(1,3.77){$O_2$}
\psdot[linewidth=0.05](2,3.464)
\rput(2.37,3.4){$Y$}
\psdot[linewidth=0.05](-3,5.196)
\rput(-3.25,5.34){$P$}
\psdot[linewidth=0.05](3,5.196)
\rput(3.25,5.34){$\ell$}

\psline[arrows=<->](-0.53,0)(-2.33,3.118)
\rput(-2.2,1.4){$N-n$}
\psline[arrows=<->](-2.6,3.594)(-3.45,5.067)
\rput(-3.28,4.15){$n$}
\end{pspicture*}
\caption{Illustration of Proposition~\ref{proposition:h1col}.}\label{picture:h1col}
\end{figure}
\end{proof}

The previous proposition shows the existence of a non-trivial $\hh 1$-collineation in $\PsiPsi n(O)$, in some circumstances. We already know some properties of such collineations (see Lemma~\ref{lemma:411easy}), but the next lemma is more precise.

\begin{lemma}\label{lemma:12}
Let $O$ be a vertex of $\Delta$ and consider $P \in \PP n(O)$ and $\ell \in \LL n(O)$ with $P \ \II n \ \ell$ ($n \geq 2$). Let also $Q \in \PP n(O)$ be a point not near $\ell$ and $o \in \LL n(O)$ be a line not near $P$, such that $Q \ \II n \ o$.
\begin{enumerate}[(i)]
\item Let $\alpha \in \PsiPsi n(O)$ be a non-trivial $\hh 1$-collineation with axis $\ell$ and center $P$. Then $\alpha$ does not fix $v_i(Q, o)$, for any $i \in \{0, 1, \ldots, n\}$.
\item Denote by $m \in \LL n(O)$ the line incident with $P$ and $Q$. Suppose that the group $G$ of all $\hh 1$-collineations in $\PsiPsi n(O)$ with axis $\ell$ and center $P$ acts transitively on the set of points $(n-1)$-neighboring $Q$ and incident with $m$. Then, for each $i \in \{0, \ldots, n-2\}$, $G$ acts transitively on the set of chambers of $\Delta$ having vertices $v_i(\pipi{n-1}(Q), \pipi{n-1}(o))$ and $v_{i+1}(\pipi{n-1}(Q), \pipi{n-1}(o))$ but not $v_i(\pipi{n-2}(Q), \pipi{n-2}(o))$.
\end{enumerate}
\end{lemma}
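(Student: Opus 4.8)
Both parts rest on one elementary observation about $\hh 1$-collineations: such a collineation of $\HH n(O)$ is trivial on $\HH{n-1}(O)$, hence fixes $O$ together with every point and line of $\HH{n-1}(O)$, in particular each $\pipi k(Q)$ and $\pipi k(o)$ for $1\le k\le n-1$. Since geodesics between two fixed vertices of the $\mathrm{CAT}(0)$ complex $\Delta$ are unique, hence fixed pointwise, it therefore fixes every vertex $v_j(\pipi k(Q),\pipi k(o))$ with $k\le n-1$. Writing $Q'=\pipi{n-1}(Q)$, $o'=\pipi{n-1}(o)$, $Q''=\pipi{n-2}(Q)$, $o''=\pipi{n-2}(o)$, both the edge $e_i:=\{v_i(Q',o'),v_{i+1}(Q',o')\}$ and the vertex $v_i(Q'',o'')$ are fixed by any $\hh 1$-collineation of $\HH n(O)$, and $\{v_i(Q',o'),v_{i+1}(Q',o'),v_i(Q'',o'')\}$ is the only chamber on $e_i$ through $v_i(Q'',o'')$; so such a collineation permutes the remaining $q$ chambers on $e_i$ (thickness $=q+1$), and (ii) is exactly the assertion that $G$ does so transitively.

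For (i): the cases $i=0$ and $i=n$ are immediate from Lemma~\ref{lemma:411easy}, since $v_0(Q,o)=Q$ is a point not near $\ell$ and $v_n(Q,o)=o$ a line not near $P$. For $1\le i\le n-1$ I would pass to a Hjelmslev plane with a shifted apex. Put $A:=\pipi{n-i}(Q)$; by the ``parallelogram'' relation the geodesic from $A$ to $v_i(Q,o)$ is straight (it is a translate, within a common apartment, of the initial length-$i$ segment of the ray $O\to o$), so $v_i(Q,o)$ is a genuine level-$i$ Hjelmslev element of $\HH i(A)$ — and so is $Q$, obtained by prolonging the ray $O\to Q$ beyond $A$. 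As $A$ lies at level $n-i\le n-1$ it is fixed by $\alpha$, which thus induces a collineation $\bar\alpha$ of $\HH i(A)$; this $\bar\alpha$ is non-trivial, for otherwise $\alpha$ would fix the vertex $Q$, contradicting Lemma~\ref{lemma:411easy}. One then checks that $\bar\alpha$ is an elation of $\HH i(A)$ whose axis and centre are the Hjelmslev elements of $\HH i(A)$ determined by $\ell$ and $P$, and that $v_i(Q,o)$ is near neither of them (this is where the hypotheses $Q\not\sim\ell$, $o\not\sim P$ enter); Lemma~\ref{lemma:free} applied inside $\HH i(A)$ then gives $\bar\alpha(v_i(Q,o))\ne v_i(Q,o)$, i.e.\ $\alpha$ does not fix $v_i(Q,o)$.

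For (ii): I would build a $G$-equivariant correspondence between the two relevant sets. First, $G$ fixes the line $m$ (its elements are elations with centre $P$, hence fix every line through $P$, and $P\in m$) and fixes $Q'=\pipi{n-1}(Q)$, so $G$ stabilises the set $\mathcal R$ of points $(n-1)$-neighbouring $Q$ and incident with $m$, which has exactly $q$ elements, being a fibre of $\pipi{n-1}$ restricted to the line $m$. Among the $q$ chambers on $e_i$ missing $v_i(Q'',o'')$, single out $c_\circ:=\{v_i(Q',o'),v_{i+1}(Q',o'),v_{i+1}(Q,o)\}$, whose third vertex is the level-$n$ vertex $v_{i+1}(Q,o)$ of the geodesic from $Q$ to $o$ (adjacent to both endpoints of $e_i$, again by the parallelogram picture). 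Two freeness statements then close the argument: (a) any $\beta\in G$ fixing $c_\circ$ fixes $v_{i+1}(Q,o)$, hence is trivial by part (i), because $i+1\in\{1,\dots,n-1\}$ (as $i\le n-2$); and (b) any $\beta\in G$ fixing a point of $\mathcal R$ is trivial by Lemma~\ref{lemma:411easy}, since such a point is not near $\ell$ (it has the same $\pipi 1$ as $Q$). By (b) and the transitivity hypothesis, $G$ acts simply transitively on $\mathcal R$, so $|G|=q$; by (a) the $G$-orbit of $c_\circ$ has $|G|=q$ elements and sits inside the $q$-element set of chambers on $e_i$ missing $v_i(Q'',o'')$, hence exhausts it. This yields the required transitivity for every $i\in\{0,\dots,n-2\}$.

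The main obstacle is the interior case of (i): choosing the shifted apex correctly, verifying that $\alpha$ induces there an elation of the predicted type, and checking the ``not near'' conditions so that Lemma~\ref{lemma:free} (or Lemma~\ref{lemma:411easy}) can be invoked — this is the only place that needs genuine geometric bookkeeping. Given (i), part (ii) is a short orbit-counting argument. One auxiliary tool likely to simplify the interior case of (i) is Lemma~\ref{lemma:segment}: propagating fixed vertices along the wall carrying the $v_j(Q',o')$ shows at once that $\alpha$ fixes one of $v_1(Q,o),\dots,v_{n-1}(Q,o)$ iff it fixes all of them, which lets one reduce to a single convenient value of $i$ before the change-of-apex step.
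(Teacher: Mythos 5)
Part (ii) of your proposal is essentially correct granting part (i): $G$ fixes $m$, $\pipi{n-1}(Q)$, the edge $e_i$ and the vertex $v_i(\pipi{n-2}(Q),\pipi{n-2}(o))$; freeness on $\mathcal R$ (Lemma~\ref{lemma:411easy}) plus the transitivity hypothesis gives $|G|=q$, freeness on $c_\circ$ follows from part (i) applied to $v_{i+1}(Q,o)$, and the $q$-element orbit of $c_\circ$ must exhaust the $q$ admissible chambers. This is a clean counting repackaging of the paper's argument, which instead transports the transitivity on $\mathcal R$ directly to the chambers by propagating rows along a wall. Likewise your opening observation (a $\hh 1$-collineation fixes every $v_j(\pipi k(Q),\pipi k(o))$ for $k\le n-1$) is fine.

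The genuine gap is the interior case of (i). After moving the apex to $A=\pipi{n-i}(Q)$ you assert that $\alpha$ induces an elation of $\HH i(A)$ ``whose axis and centre are the Hjelmslev elements of $\HH i(A)$ determined by $\ell$ and $P$''. But $P$ and $\ell$ lie on the far side of $O$, at distance strictly greater than $i$ from $A$ (in the relevant apartment one computes $d(A,P)=2n-i$), so they determine no element of $\HH i(A)$ whatsoever; and no substitute centre/axis is available for free, because deciding whether the induced collineation is an elation at all requires knowing which vertices of $B(A,i)$ at distance $n$ from $O$ are fixed --- and these include precisely the vertices $v_i(Q,o)$ that the lemma is about. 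The step is therefore both ill-posed and circular, and Lemma~\ref{lemma:free} cannot be invoked inside $\HH i(A)$. The correct closure is the one you gesture at in your last sentence but do not complete: Lemma~\ref{lemma:segment} applied to the wall from $\pipi{n-1}(Q)$ to $\pipi{n-1}(o)$ shows that $\alpha$ fixes one of $v_1(Q,o),\dots,v_{n-1}(Q,o)$ if and only if it fixes all of them, but you must then link this row to a vertex already controlled by Lemma~\ref{lemma:411easy}, namely $Q$. The paper does this by first observing that $\alpha$ fixes $m$ (a line through its centre $P$) and applying Lemma~\ref{lemma:segment} once along the wall from $\pipi{n-1}(m)$ to $\pipi{n-1}(Q)$ to deduce that $\alpha$ fixes $v_1(Q,m)$; prepending $v_1(Q,m)$ to the wall above makes $Q$ itself the first term of the row $Q,v_1(Q,o),\dots,v_{n-1}(Q,o)$, so that $\alpha(Q)\ne Q$ forces $\alpha$ to move every term. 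No change of apex is needed anywhere.
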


\begin{proof}
Let $\alpha \in \PsiPsi n(O)$ be a $\hh 1$-collineation with axis $\ell$ and center $P$. Let also $m \in \LL n(O)$ be the line incident with $P$ and $Q$ (see Figure~\ref{picture:12}). We know by definition of an elation that $\alpha$ fixes $m$, and the fact that $\alpha^{\star_{n-1}}$ is trivial implies that it also fixes $\pipi{n-1}(Q)$. Hence, from Lemma~\ref{lemma:segment} applied to the segment from $\pipi{n-1}(m)$ to $\pipi{n-1}(Q)$, we get that $\alpha$ fixes $v_1(Q, m)$. Assertions (i) and (ii) then follow thanks to another application of Lemma~\ref{lemma:segment} to the segment with vertices $v_1(Q, m), \pipi{n-1}(Q), v_1(\pipi{n-1}(Q), \pipi{n-1}(o)), \ldots, \pipi{n-1}(o)$. (Recall, for (i), that when $\alpha$ is non-trivial it does not fix $Q$ nor $o$ by Lemma~\ref{lemma:411easy}.)
\begin{figure}[t!]
\centering
\begin{pspicture*}(-3.7,-3)(3.7,3)
\psset{unit=0.8cm}

\psline(0,0)(-4,0)
\psline(0,0)(2,3.464)
\psline(0,0)(-2,3.464)
\psline(0,0)(2,-3.464)
\psline(0,0)(-2,-3.464)

\psline(-4,0)(-2,3.464)
\psline(-2.5,2.598)(1.5,2.598)

\psline(-1.5,2.598)(-1.6,3.364)
\psline(-2.5,2.598)(-1.6,3.364)

\psline(0.5,2.598)(0,3.464)
\psline(-0.5,2.598)(0,3.464)
\psline(0.5,2.598)(0.4,3.364)
\psline(-0.5,2.598)(0.4,3.364)

\psdot[linewidth=0.05](0,0)
\rput(0.44,0){$O$}
\psdot[linewidth=0.05](-2,3.464)
\rput(-2.37,3.48){$Q$}
\psdot[linewidth=0.05](-4,0)
\rput(-4.33,0.13){$m$}
\psdot[linewidth=0.05](2,3.464)
\rput(2.27,3.48){$o$}
\psdot[linewidth=0.05](-2,-3.464)
\rput(-2.34,-3.53){$P$}
\psdot[linewidth=0.05](2,-3.464)
\rput(2.23,-3.53){$\ell$}
\psdot[linewidth=0.05](-2.5,2.598)
\rput(-3.6,2.6){$v_1(Q,m)$}
\end{pspicture*}
\caption{Illustration of Lemma~\ref{lemma:12}.}\label{picture:12}
\end{figure}
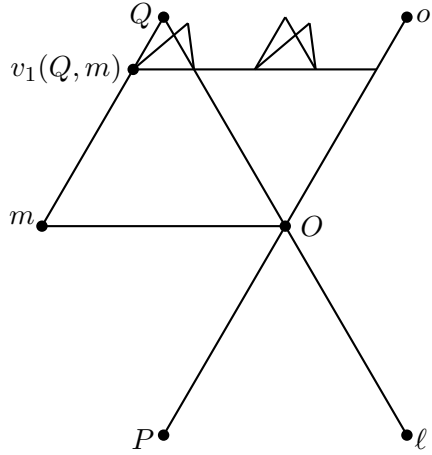
\end{proof}

\subsection{From panel-transitivity to chamber-transitivity}

In this subsection, we prove that if $\Aut(\Delta)^+$ is non-$4$-discrete and transitive on panels of each type, then $\Aut(\Delta)^+$ is transitive on chambers. We start by the following easy lemma, valid in any projective Hjelmslev plane of level $1$ (i.e.\ any projective plane).

\begin{lemma}\label{lemma:cycles}
Let $\alpha$ be a non-trivial elation of $\HH 1$ with axis $\ell$ and center $P$. Let $m \in \LL 1$ be incident with $P$ but different from $\ell$. Then the permutation induced by $\alpha$ on the set of $q$ points incident with $m$ but different from $P$ is a product of $k \geq 1$ disjoint cycles of the same length $c \geq 2$, where $k \cdot c = q$. Moreover, $k$ and $c$ do not depend on $m$.
\end{lemma}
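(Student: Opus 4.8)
The plan is to exploit the fact that an elation $\alpha$ with axis $\ell$ and center $P$ fixes $P$ and fixes $\ell$, hence fixes $m$ (as $m$ passes through $P$) and permutes the $q$ points on $m$ other than $P$. First I would observe that by Lemma~\ref{lemma:free} (in the case $n=1$, equivalently Lemma~\ref{lemma:411easy}), since $\alpha$ is non-trivial it fixes \emph{no} point off $\ell$; in particular none of the $q$ points on $m\setminus\{P\}$ is fixed (these points are not on $\ell$ because $m\neq\ell$ and $P=m\cap\ell$). So every cycle of the induced permutation has length $\geq 2$. To get that all cycles have the same length $c$, I would use the group structure: the set $E$ of all elations with axis $\ell$ and center $P$ (together with the identity) forms a group, and by Lemma~\ref{lemma:free} it acts freely on the points of $m\setminus\{P\}$. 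A free action of a group on a set has all orbits of the same size, namely $|E|$; the orbits partition the $q$ points, giving $k$ orbits each of size $|E|=:s$ with $ks=q$. The cyclic group $\langle\alpha\rangle\leq E$ then acts on each orbit; since $\langle\alpha\rangle$ acts freely too (it is a subgroup acting freely), each $\langle\alpha\rangle$-orbit inside a given $E$-orbit has size $c:=|\alpha|$ (the order of $\alpha$), so the permutation induced by $\alpha$ is a disjoint product of cycles all of length $c$. Counting: the total number of such cycles is $q/c =: k'\geq 1$, and $k'\cdot c = q$, which is the claimed decomposition (with $k'$ in the role of $k$).

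The key steps, in order, are: (1) $\alpha$ fixes $m$ and acts on the $q$ points of $m\setminus\{P\}$ with no fixed point (Lemma~\ref{lemma:411easy}); (2) the set $E$ of all $(P,\ell)$-elations is a group acting freely on these $q$ points (Lemma~\ref{lemma:free}); (3) freeness forces all cycle lengths of $\alpha$ to equal $|\alpha|$, and $|\alpha|$ divides $|E|$; (4) the count $k\cdot c=q$ follows by partitioning the $q$ points into $\langle\alpha\rangle$-orbits.

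For the independence of $k$ and $c$ from the choice of $m$: here I would use that the order $c=|\alpha|$ of $\alpha$ is manifestly intrinsic to $\alpha$ and does not involve $m$ at all, so $c$ is the same for every line $m$ through $P$ distinct from $\ell$; then $k=q/c$ is determined, hence also independent of $m$. This is the cleanest route — the point is simply to recognize that $c$ is nothing but the order of the group element $\alpha$, read off from its action on \emph{any} admissible $m$.

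The main obstacle, such as it is, is making precise that a free action of an arbitrary (a priori infinite, though here it is finite since $\Delta$ is locally finite) group has all orbits of equal size and that this transfers to the cyclic subgroup $\langle\alpha\rangle$ — this is standard orbit-stabilizer bookkeeping, but one must be slightly careful to invoke Lemma~\ref{lemma:free} correctly: that lemma gives freeness of the full elation group $E$ on the points of $m$ not neighboring $P$, which for $n=1$ is all $q$ points of $m\setminus\{P\}$ (in level $1$ the neighboring relation is trivial). Once that identification is in place, the argument is purely formal group theory and the decomposition drops out.
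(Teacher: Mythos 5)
Your proof is correct, and it rests on the same key input as the paper's --- namely that a non-trivial elation with axis $\ell$ and center $P$ fixes no point of $m$ other than $P$ (Lemma~\ref{lemma:411easy}, equivalently the $n=1$ case of Lemma~\ref{lemma:free}), applied not just to $\alpha$ but to its powers --- but you package it differently. The paper argues by contradiction: if two cycles had lengths $c_1<c_2$, then $\alpha^{c_1}$ would be a non-trivial elation with forbidden fixed points, and it reruns the same power argument, tersely, to get independence of $m$. You instead note that $\langle\alpha\rangle$ sits inside the group of all elations with axis $\ell$ and center $P$, which acts freely on the $q$ points of $m$ distinct from $P$ by Lemma~\ref{lemma:free}, so every cycle has length equal to the order of $\alpha$ as a collineation of $\HH 1$; this identification $c=|\alpha|$ makes the independence of $k$ and $c$ from $m$ immediate, which is arguably cleaner than the paper's one-line justification. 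The excursion through the full elation group $E$ and the equality of its orbit sizes is not needed (only the freeness of the $\langle\alpha\rangle$-action is used), but it is harmless, and your reading of the level-$1$ relations (``not neighboring $P$'' means ``different from $P$'', ``not neighboring $\ell$'' means ``different from $\ell$'') is the right one, so Lemma~\ref{lemma:free} applies exactly as you invoke it.
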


\begin{proof}
Let $\sigma$ be the permutation induced by $\alpha$ on this set of $q$ points. By Lemma~\ref{lemma:411easy}, $\sigma$ has no fixed point. Now it suffices to prove that two cycles in the cycle decomposition of $\sigma$ always have the same length. Suppose for a contradiction that there are two cycles of different lengths $c_1 < c_2$. Then $\alpha^{c_1}$ is an elation of $\HH 1$ which is non-trivial (because $\sigma^{c_1}$ is non-trivial) and $\sigma^{c_1}$ has fixed points, which contradicts Lemma~\ref{lemma:411easy}. So all $k$ cycles in the cycle decomposition must have the same length $c \geq 2$, and of course $k \cdot c = q$. Note that $k$ and $c$ do not depend on $m$, otherwise we would once again get a power of $\alpha$ which is non-trivial but has forbidden fixed points.
\end{proof}

\begin{proposition}\label{proposition:panel}
Suppose that $\Aut(\Delta)^+$ is non-$4$-discrete and transitive on panels of each type. Then $\Aut(\Delta)^+$ is chamber-transitive.
\end{proposition}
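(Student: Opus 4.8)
The plan is to upgrade panel-transitivity to chamber-transitivity by exhibiting, for a fixed panel $\pi = \{x,y\}$ of some type, an automorphism group that already acts transitively on the set $C$ of chambers containing $\pi$. Since $\Aut(\Delta)^+$ is transitive on panels of each type, this suffices: any two chambers can first be moved so that they share a common panel of a prescribed type (using that every chamber has a panel of every type and that $\Aut(\Delta)^+$ is panel-transitive on each type), and then one uses transitivity on the chambers through that fixed panel.

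First I would extract from the non-$4$-discreteness hypothesis, via Proposition~\ref{proposition:h1col} (applied with $n = 1$, noting $n+3 = 4$), a non-trivial $\hh 1$-collineation of $\HH 1(O) = \Pi_O$ — that is, a non-trivial elation — at each vertex $O$. So at the vertex $x$ of $\pi$ we get a non-trivial elation $\alpha$ of the projective plane $\Pi_x$ coming from an element of $\Aut(\Delta)^+$ fixing $x$, and we may arrange (using Lemma~\ref{lemma:changeaxis} to slide the center and axis among neighbors, which in a level-$1$ plane means: any elation with a given center and axis can be viewed with that same center and any axis through it / vice versa — more precisely, using that we have freedom to pick the panel whose other endpoint plays the role of a point incident with the axis) that the axis $\ell$ of $\alpha$ is the line of $\Pi_x$ corresponding to the vertex $y$, so that $\alpha$ fixes $y$ and hence stabilizes $C$. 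By Lemma~\ref{lemma:cycles}, the permutation $\sigma$ that $\alpha$ induces on $C \setminus \{\text{the unique chamber through }\pi\text{ "on the axis side"}\}$... — here I need to be careful: in fact I want $\alpha$ to act on $C$, the $q+1$ chambers through $\pi$. Since $y$ corresponds to a point on the axis $\ell$ of $\alpha$, by Lemma~\ref{lemma:h1}/\ref{lemma:411easy} $\alpha$ fixes all the lines through the center $P$ and the chambers through $\pi$ correspond to the lines of $\Pi_y$ through the point $x$ — this needs the complementary elation at $y$. So the cleaner approach: get elations $\alpha$ at $x$ and $\beta$ at $y$, each stabilizing $\pi$ and hence $C$, and study the group $\langle \bar\alpha, \bar\beta\rangle \leq \Sym(C)$.

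The key step is then a local analysis of this permutation group on the $(q+1)$-element set $C$. Each of $\bar\alpha$, $\bar\beta$ is, by Lemma~\ref{lemma:cycles}, a product of $k$ disjoint $c$-cycles with $kc = q$ (acting on $C$ minus one fixed chamber, the one lying on the relevant axis); crucially the two fixed chambers are \emph{different} (one comes from the axis at $x$, the other from the axis at $y$, and these can be chosen distinct by varying the elations — this is where panel-transitivity gives room). A group containing two such permutations with distinct fixed points and no other common structure will be transitive; more carefully, I would argue: the orbit of $\langle\bar\alpha,\bar\beta\rangle$ through any chamber has size divisible by $c$ (from $\bar\alpha$) unless it contains $\bar\alpha$'s fixed chamber, and similarly for $\bar\beta$, and chasing these two fixed points one shows the whole of $C$ is a single orbit. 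The main obstacle is precisely this combinatorial step — ensuring that the two elations can be chosen so that their induced permutations on $C$ have distinct fixed chambers and together generate a transitive group, rather than, say, both fixing the same chamber or generating an imprimitive group respecting a common block system. This is where I expect to need Lemma~\ref{lemma:changeaxis} (to reposition centers/axes among neighboring choices), the freedom coming from $\Aut(\Delta)^+$ being transitive on \emph{all} panels of the relevant type (so the "point incident with the axis" playing the role of the base panel can be moved), and possibly Lemma~\ref{lemma:cycles}'s assertion that $k,c$ are independent of the line — to transport the cycle structure between the two vertices and conclude that the combined action exhausts $C$.
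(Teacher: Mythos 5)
Your reduction (panel-transitivity plus transitivity of a panel-stabilizer on the chambers through that panel implies chamber-transitivity) is fine, but both of the steps you yourself flag as delicate are genuine gaps, and as stated they do not close. First, to make an elation of $\HH 1(x)$ act non-trivially on $C$ you need its axis to pass through the point of $\HH 1(x)$ corresponding to $y$ while its center is \emph{not} that point (if the center is that point the induced permutation of $C$ is trivial; if neither holds, the automorphism does not even stabilize the panel). Proposition~\ref{proposition:h1col} produces an elation with uncontrolled center and axis; Lemma~\ref{lemma:changeaxis} is vacuous at level $1$, since at level $1$ ``neighboring'' means equal (and your parenthetical reading of it is false: a non-trivial elation of a projective plane has a unique center and a unique axis); and panel-transitivity only makes the stabilizer of $x$ transitive on points and on lines of $\HH 1(x)$ \emph{separately}, not on flags -- flag-transitivity at vertices is essentially what the proposition is trying to prove, so conjugation does not let you position the center/axis as required, nor force the fixed chambers of $\bar\alpha$ and $\bar\beta$ to be distinct. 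Second, even granting two permutations of $C$, each with exactly one fixed chamber, distinct fixed chambers, and the remaining $q$ chambers in cycles of a common length ($c$ for one, $c'$ for the other, both dividing $q$), transitivity of $\langle\bar\alpha,\bar\beta\rangle$ does not follow: the divisibility bookkeeping only says that the orbit through $\bar\alpha$'s fixed chamber has size $\equiv 1 \pmod c$ and $\equiv 0 \pmod{c'}$, the orbit through $\bar\beta$'s fixed chamber has size $\equiv 0 \pmod c$ and $\equiv 1 \pmod{c'}$, and all other orbits have size divisible by both; when $c$ and $c'$ can be coprime (nothing in the hypotheses of Theorem~\ref{maintheorem:A} forces $q$, hence $c$ and $c'$, to be powers of a single prime), there are numerically consistent intransitive splittings of the $q+1$ chambers. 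So the ``main obstacle'' you name is exactly where the argument fails, and it is not a routine repair.

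The paper sidesteps both problems by arguing globally rather than over a single panel: it colors chambers by $\Aut(\Delta)^+$-orbit into two colors, uses panel-transitivity to get constants $b_t, r_t$ of blue/red chambers on every panel of type $t$, a count around a vertex to see these constants do not depend on $t$, and then a single non-trivial elation (Proposition~\ref{proposition:h1col}) together with Lemma~\ref{lemma:cycles} to force $b \equiv 1$ and $r \equiv 0 \pmod c$ with $c \geq 2$; this gives $r \geq 2$ red chambers on \emph{every} panel, contradicting that the panel through the center of the elation carries at most one non-blue chamber. Note that this argument never needs to place an elation's axis through a prescribed point and never needs two elations to interact, which is precisely what your local approach cannot yet supply.
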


\begin{proof}
Assume for a contradiction that $\Aut(\Delta)^+$ is not chamber-transitive. Then we can color the chambers of $\Delta$ in blue and red so that each color is used at least once and two chambers with different colors do not belong to the same orbit. (For instance, color one orbit of chambers in blue and all other orbits in red.) For each type $t \in \{\{0,1\},\{1,2\},\{0,2\}\}$, the transitivity on $t$-panels implies that there exist $b_t$ and $r_t$ such that all $t$-panels are adjacent to $b_t$ blue chambers and $r_t$ red chambers. Note that $b_t \geq 1$ and $r_t \geq 1$, otherwise all chambers of $\Delta$ would be the same color. In $\Delta$, all panels have the same number of chambers, say $q+1$, so $b_t + r_t = q+1$ for each $t$.

We first claim that $b_{\{0,1\}} = b_{\{1,2\}} = b_{\{0,2\}}$ (and $r_{\{0,1\}} = r_{\{1,2\}} = r_{\{0,2\}}$). Indeed, take $t, t' \in \{\{0,1\},\{1,2\},\{0,2\}\}$ with $t \neq t'$ and consider a vertex $v$ of type $t \cap t'$ in $\Delta$. The number of blue chambers adjacent to $v$ (i.e.\ in the residue corresponding to $v$) is equal to $p_t \cdot b_t$, where $p_t$ is the number of $t$-panels adjacent to $v$. Since the residue associated to $v$ is a projective plane of order $q$, we have $p_t = q^2+q+1$ and the number of blue chambers adjacent to $v$ is $(q^2+q+1) \cdot b_t$. But for the same reason with $t'$ instead of $t$, this number is also equal to $(q^2+q+1) \cdot b_{t'}$. So $b_t = b_{t'}$ and $r_t = r_{t'}$. In the following we therefore write $b = b_{\{0,1\}} = b_{\{1,2\}} = b_{\{0,2\}}$ and $r = r_{\{0,1\}} = r_{\{1,2\}} = r_{\{0,2\}}$. Recall that $b+r = q+1$.

Now consider a vertex $O$ in $\Delta$ and a non-trivial elation $\alpha$ in $\PsiPsi 1(O)$, whose existence is ensured by Proposition~\ref{proposition:h1col}. Let $P \in \PP 1(O)$ and $\ell \in \LL 1(O)$ be the center and axis of the elation $\alpha$. Consider $m \in \LL 1(O)$ a line incident with $P$ but different from $\ell$. By Lemma~\ref{lemma:cycles}, the permutation induced by $\alpha$ on the set of $q$ points incident with $m$ but different from $P$ is a product of $k \geq 1$ cycles of length $c \geq 2$, with $k \cdot c = q$. If the chamber with vertices $O$, $P$ and $m$ is blue, then this implies that $b \equiv 1 \pmod c$ and $r \equiv 0 \pmod c$. If it is red, then $r \equiv 1 \pmod c$ and $b \equiv 0 \pmod c$. But $c$ does not depend on $m$, so this reasoning is valid for any choice of $m$. As $b$ cannot be congruent to both $0$ and $1$ modulo $c$ (because $c \geq 2$), this means that all the chambers with vertices $O$, $P$ and some $m \neq \ell$ have the same color. We can assume that this common color is blue, so that $b \equiv 1 \pmod c$ and $r \equiv 0 \pmod c$. In particular we have $r \geq c \geq 2$, but this is a contradiction with the fact that there is at most one red chamber adjacent to the panel defined by $O$ and $P$.
\end{proof}

\begin{remark}
The non-$4$-discreteness in Proposition~\ref{proposition:panel} can be replaced by non-$2$-discreteness. Indeed, in the proof we only need a non-trivial elation in $\PsiPsi 1(O)$ for some vertex $O$ (of any type), and Proposition~\ref{proposition:h1col} with $n+1$ instead of $n+3$ indeed gives a non-trivial $\hh 1$-collineation in $\PsiPsi n(O)$ for a vertex $O$ whose type is not controlled.

A similar remark can be done for many of our following results: we often assume that $\Aut(\Delta)$ is non-$f(n)$-discrete for some linear function $f$ of $n$, but we never claim that our choice for $f$ is optimal. In particular, the value $6n+2$ appearing in Theorem~\ref{maintheorem:A'} can certainly be replaced by a smaller value with some more effort.
\end{remark}

\subsection{From chamber-transitivity to \texorpdfstring{$1$-Moufangness}{1-Moufangness}}

The following theorem is due to Kantor \cite{Kantor} and concerns finite projective planes with a collineation group transitive on incident point-line pairs. This result will be helpful to get a local information about $\Aut(\Delta)$.

\begin{theorem}[Kantor, 1987]\label{theorem:Kantor}
Let $\Pi$ be a projective plane of order $q$, and let $F$ be a collineation group of $\Pi$ transitive on incident point-line pairs. Then either
\begin{enumerate}[(a)]
\item $\Pi$ is Desarguesian and $F \geq \PSL(3,q)$, or
\item $F$ is a Frobenius group of odd order $(q^2+q+1)(q+1)$, and $q^2+q+1$ is prime.
\end{enumerate}
\end{theorem}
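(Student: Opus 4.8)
The plan is to follow Kantor's strategy, whose engine is the classification of the primitive permutation groups of odd degree (and hence, ultimately, the classification of finite simple groups). First I would record two standard facts. By the theorem of Higman and McLaughlin, a flag-transitive automorphism group of a finite linear space is point-primitive, so $F$ acts primitively on the set of $v := q^2+q+1$ points of $\Pi$, and dually on the set of lines. Moreover $v = q^2+q+1 = q(q+1)+1$ is odd, whatever the parity of $q$, so $F$ is a primitive permutation group of odd degree. Two further constraints will be kept in play: a point stabiliser $F_P$ acts transitively on the $q+1$ lines through $P$ (and dually on the points of a line), and the point stabilisers and line stabilisers of $F$ are interchanged by a correlation, hence isomorphic.

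If $F$ is $2$-transitive on the points, then by the Ostrom--Wagner theorem $\Pi$ is Desarguesian, $\Pi \cong \mathrm{PG}(2,q)$, and inspecting the $2$-transitive groups of degree $q^2+q+1$ that can embed in the collineation group of a plane of order $q$ forces $F \geq \mathrm{PSL}(3,q)$; this is conclusion~(a). So from here on I may assume that $F$ is primitive but not $2$-transitive on points.

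Now I would invoke Kantor's classification of the primitive permutation groups of odd degree and test each admissible socle $N = \mathrm{soc}(F)$ against the constraints above. In the almost simple case, the candidates --- $N \cong A_m$ in a subset action, $N \cong \mathrm{PSL}(d',q')$ on subspaces of $\mathrm{PG}(d'-1,q')$, $N \cong \mathrm{PSL}(2,q')$ on the cosets of a Borel or of a dihedral subgroup, $N \cong \mathrm{Sp}(2m,2)$ on an orthogonal coset space, and the relevant $\mathrm{PSU}(3,q')$, Suzuki, Ree and sporadic actions --- are eliminated in turn: either $q^2+q+1$ cannot equal the degree in question (or has the wrong factorisation or parity), or $F_P$ cannot be transitive on a set of $q+1$ lines carrying a projective plane, or the group is already known not to act flag-transitively on a plane. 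The one numerically matching entry, $N \cong \mathrm{PSL}(3,q)$ acting on the points of $\mathrm{PG}(2,q)$, is $2$-transitive and has already been dealt with; the product-action and diagonal types are excluded the same way. This leaves only the affine case: $N \cong \mathbf{Z}_p^d$ acting regularly, $p^d = q^2+q+1$, and $F \leq \mathrm{AGL}(d,p)$. Analysing the point stabiliser (a subgroup of $\mathrm{GL}(d,p)$) and its required transitive action on the $q+1$ lines through the fixed point then forces $d = 1$, so $v = q^2+q+1$ is prime and $F$ is a Frobenius group with kernel $\mathbf{Z}_v$ and complement of order $q+1$, that is, of order $(q^2+q+1)(q+1)$.

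It remains to see that this order is odd. Were $|F|$ even, $F$ would contain an involutory collineation, which by Baer's theorem is an elation, a homology, or a Baer involution; flag-transitivity then spreads its fixed configuration over the whole plane and manufactures enough perspectivities to force $\Pi$ Desarguesian, returning us to case~(a). Hence, whenever we are not in case~(a), the order $(q^2+q+1)(q+1)$ is odd and $q^2+q+1$ is prime --- conclusion~(b). The principal obstacle is plainly the input to the third paragraph: Kantor's classification of primitive permutation groups of odd degree is itself a deep theorem resting on the O'Nan--Scott reduction and the classification of finite simple groups, and the ensuing case-by-case elimination --- in particular the disposal of the small-rank almost simple possibilities --- is where essentially all of the genuine work lies.
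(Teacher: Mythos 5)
The paper offers no proof of this statement beyond the citation of Kantor's Theorem~A, and your outline is a faithful reconstruction of the strategy of that reference (Higman--McLaughlin point-primitivity, the Ostrom--Wagner theorem in the $2$-transitive case, the classification of primitive permutation groups of odd degree, and the reduction of the affine case to a one-dimensional Frobenius group), so it is essentially the same approach as the source the paper relies on. As you acknowledge yourself, the case-by-case elimination resting on the classification of finite simple groups is where all the genuine work lies and is not carried out here, which is precisely why the paper treats this result as an external black box rather than reproving it.
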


\begin{proof}
See~\cite{Kantor}*{Theorem~A}.
\end{proof}

\begin{corollary}\label{corollary:Moufang}
Suppose that $\Aut(\Delta)^+$ is non-$4$-discrete and chamber-transitive. Then for each vertex $O$ in $\Delta$, the projective plane $\HH 1(O)$ is Desarguesian and $\PsiPsi 1(O) \geq \PSL(3,q)$, where $q+1$ is the number of chambers adjacent to each panel of $\Delta$. In particular, $\HH 1(O)$ is Moufang and $\PsiPsi 1(O)$ contains all elations of $\HH 1(O)$.
\end{corollary}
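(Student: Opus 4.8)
The plan is to reduce the statement to a direct application of Kantor's theorem (Theorem~\ref{theorem:Kantor}) to the projective plane $\HH 1(O)$, together with the chamber-transitivity hypothesis. First I would observe that, for a fixed vertex $O$, the chambers of $\Delta$ adjacent to $O$ are in bijection with the incident point-line pairs of the projective plane $\HH 1(O)$ (a chamber through $O$ consists of $O$ together with a panel, i.e.\ a point and a line of the residue that are incident). Moreover, $\PsiPsi 1(O)$ is precisely the group of collineations of $\HH 1(O)$ induced by elements of $\Aut(\Delta)^+$ fixing $O$. Chamber-transitivity of $\Aut(\Delta)^+$ then implies that $\PsiPsi 1(O)$ is transitive on the incident point-line pairs of $\HH 1(O)$: given two such pairs, the corresponding chambers through $O$ can be mapped to one another by some $g \in \Aut(\Delta)^+$, and since $O$ is the unique vertex of its type in each chamber (chambers have one vertex of each type), any such $g$ must fix $O$, hence induces an element of $\PsiPsi 1(O)$ carrying the first pair to the second.

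Next I would invoke Theorem~\ref{theorem:Kantor} with $\Pi = \HH 1(O)$ (order $q$) and $F = \PsiPsi 1(O)$. This gives two alternatives: either (a) $\HH 1(O)$ is Desarguesian and $\PsiPsi 1(O) \geq \PSL(3,q)$, or (b) $\PsiPsi 1(O)$ is a Frobenius group of odd order $(q^2+q+1)(q+1)$. To finish I must rule out case (b), and this is where the non-$4$-discreteness hypothesis enters. By Proposition~\ref{proposition:h1col} (applied with $n=1$, using that non-$4$-discrete implies non-$(1+3)$-discrete, and that chamber-transitivity implies transitivity on vertices of each type), for each vertex $O$ there is a non-trivial $\hh 1$-collineation in $\PsiPsi 1(O)$, i.e.\ a non-trivial elation. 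So $\PsiPsi 1(O)$ contains a non-trivial elation $\alpha$ with some center $P$ and axis $\ell$.

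The key step is then to see that the presence of a non-trivial elation is incompatible with case (b). In a Frobenius group acting transitively on the $(q^2+q+1)(q+1)$ incident point-line pairs, the point stabilizer... rather, the cleanest route: an elation with center $P$ and axis $\ell$ fixes the $q+1$ points incident with $\ell$ and the $q+1$ lines through $P$; in particular it fixes the flag $(P,\ell)$ and also, for each line $m \neq \ell$ through $P$, it fixes the flag $(P,m)$, so $\alpha$ fixes at least $q+1$ incident point-line pairs. A non-identity element of a Frobenius group fixes at most one point of the set it acts on freely-minus-fixed-point-structure; more precisely, a Frobenius group acting transitively with the Frobenius property has every non-trivial element fixing at most one of the objects permuted. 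Since $\alpha \neq \id$ fixes $q+1 \geq 3$ incident point-line pairs (the flags $(P,m)$ for $m$ ranging over the lines through $P$), this contradicts the Frobenius condition. Hence case (b) cannot occur, so case (a) holds: $\HH 1(O)$ is Desarguesian and $\PsiPsi 1(O) \geq \PSL(3,q)$. Finally, a Desarguesian projective plane is Moufang, and $\PSL(3,q)$ contains all elations of the Desarguesian plane of order $q$ (the full elation group is generated inside $\PGL(3,q)$ and already lies in $\PSL(3,q)$), so $\PsiPsi 1(O)$ contains all elations of $\HH 1(O)$, completing the proof. The main obstacle I anticipate is making the exclusion of the Frobenius case (b) airtight --- specifically, pinning down precisely why a non-trivial elation fixing $q+1 \geq 3$ flags is forbidden in a Frobenius group of the stated order; one may alternatively argue via orders (an elation has order dividing $q$, which is coprime to $q^2+q+1$ but not necessarily to $q+1$, so the order argument alone is insufficient and the fixed-point/Frobenius argument is the right one).
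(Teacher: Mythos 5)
Your proof is correct and follows essentially the same route as the paper: chamber-transitivity gives flag-transitivity of $\PsiPsi 1(O)$, Kantor's theorem applies, and case (b) is excluded because Proposition~\ref{proposition:h1col} yields a non-trivial elation with fixed flags (the paper phrases this slightly more cleanly by noting that $|\PsiPsi 1(O)| = (q^2+q+1)(q+1)$ equals the number of flags, so the transitive action would have to be free). Your closing aside is the only slip: since $q$ and $q+1$ are consecutive integers, the order of an elation (which divides $q$) is in fact coprime to $(q^2+q+1)(q+1)$, so the order argument would also have worked.
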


\begin{proof}
For any vertex $O$ in $\Delta$, $\HH 1(O)$ is a projective plane of order $q$. The chamber-transitivity of $\Delta$ directly implies that $\PsiPsi 1(O)$ is transitive on incident point-line pairs of $\HH 1(O)$. Hence, by Theorem~\ref{theorem:Kantor}, either $\HH 1(O)$ is Desarguesian and $\PsiPsi 1(O) \geq \PSL(3,q)$, or $|\PsiPsi 1(O)|\ = (q^2+q+1)(q+1)$. We only need to show that the latter is impossible. Note that there are exactly $(q^2+q+1)(q+1)$ incident point-line pairs in $\HH 1(O)$, so the equality $|\PsiPsi 1(O)|\ = (q^2+q+1)(q+1)$ would imply that the action of $\PsiPsi 1(O)$ on these point-line pairs is free. However, by Proposition~\ref{proposition:h1col}, there exists a non-trivial elation in $\PsiPsi 1(O)$. So the action is not free and the statement stands proven.

Note that, for a finite projective plane $\Pi$ (say of order $q$), being Desarguesian is equivalent to being Moufang. Also, in this case, the group generated by all elations of $\Pi$ is called the \textit{little projective group} and is exactly $\PSL(3,q)$.
\end{proof}

\subsection{From chamber-transitivity to Bruhat--Titsness}

We have seen with Corollary~\ref{corollary:Moufang} that all $\HH 1(O)$ are Moufang when $\Aut(\Delta)^+$ is chamber-transitive and non-$4$-discrete. Our next goal is to show, for each $n \geq 2$, that all $\HH n(O)$ are Moufang when $\Aut(\Delta)^+$ is chamber-transitive and non-$(6n+2)$-discrete.

We start with the next easy corollary of Proposition~\ref{proposition:h1col}.

\begin{lemma}\label{lemma:h1col}
Let $n \geq 1$ and suppose that $\Aut(\Delta)^+$ is non-$(n+3)$-discrete and chamber-transitive. Then for each vertex $O$ in $\Delta$, each point $P \in \PP n(O)$ and each line $\ell \in \LL n(O)$ with $P \ \II n \ \ell$, there exists a non-trivial $\hh 1$-collineation in $\PsiPsi n(O)$ with axis $\ell$ and center $P$.
\end{lemma}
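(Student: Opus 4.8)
The statement to prove is Lemma~\ref{lemma:h1col}: under non-$(n+3)$-discreteness and chamber-transitivity of $\Aut(\Delta)^+$, for every vertex $O$ and every incident pair $P\ \II n\ \ell$ in $\HH n(O)$ there is a non-trivial $\hh 1$-collineation in $\PsiPsi n(O)$ with axis $\ell$ and center $P$. The natural approach is to first invoke Proposition~\ref{proposition:h1col}, which already guarantees the existence of \emph{some} non-trivial $\hh 1$-collineation $\beta \in \PsiPsi n(O)$, say with axis $\ell_0$ and center $P_0$ (where $P_0\ \II n\ \ell_0$), and then to transport $\beta$ by collineations of $\HH n(O)$ coming from automorphisms of $\Delta$ until its axis and center become the prescribed $\ell$ and $P$. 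The heart of the matter is therefore a transitivity statement: the group $\PsiPsi n(O)$ should act transitively on the incident point-line pairs of $\HH n(O)$, so that some $\gamma \in \PsiPsi n(O)$ sends $(P_0,\ell_0)$ to $(P,\ell)$; then $\gamma\beta\gamma^{-1}$ is the desired non-trivial $\hh 1$-collineation with axis $\ell$ and center $P$ (conjugation clearly preserves the property of being a non-trivial $\hh 1$-collineation, and transports axis and center accordingly).

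So the real work reduces to establishing that $\PsiPsi n(O)$ is transitive on incident point-line pairs of $\HH n(O)$. I would prove this by induction on $n$, peeling off one level at a time. For the base case $n=1$, chamber-transitivity of $\Aut(\Delta)^+$ says exactly that the stabilizer of $O$ acts transitively on the chambers containing $O$, i.e.\ on the flags of $\HH 1(O)$, which is the $n=1$ statement. For the inductive step, given an incident pair $(P,\ell)$ and $(P',\ell')$ in $\HH n(O)$, first use the inductive hypothesis (or rather the $n=1$ case combined with iterated residue arguments, exactly as in the definition of $\HH n$ via sequences $(v_1,\dots,v_n)$) to bring $\pipi{n-1}(P,\ell)$ onto $\pipi{n-1}(P',\ell')$; after doing so one is reduced to the case where the two incident pairs agree at level $n-1$, i.e.\ they differ only in the last vertex $v_n$. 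At that point $P$ and $P'$ are both points of $\HH n(O)$ lying over the same point of $\HH{n-1}(O)$, and likewise for $\ell,\ell'$; transitivity here is a statement about the residue (a projective plane) at the vertex $v_{n-1}$ of $\Delta$, and again follows from chamber-transitivity applied at that vertex (using that $\Aut(\Delta)^+$ acts transitively on \emph{all} chambers, not just those through $O$). Threading these pieces together, with Lemma~\ref{lemma:segment} used to keep track of how the intermediate vertices of the geodesic from $P$ to $\ell$ are moved, yields the full transitivity of $\PsiPsi n(O)$ on incident point-line pairs.

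\textbf{Main obstacle.}
The delicate point is the bookkeeping in the inductive step: one must be careful that the automorphism realizing the transitivity at level $n-1$ can be chosen to fix $O$ (so that it genuinely lies in $\PsiPsi n(O)$) while simultaneously moving the level-$n$ data correctly, and that afterwards the residual freedom at the last vertex is still available. Concretely, the issue is that "transitive on chambers" is a global statement, whereas we need transitivity within the residue of a \emph{specific} vertex while pinning down $O$; reconciling these requires invoking chamber-transitivity at each vertex along the geodesic and composing the resulting automorphisms, checking at each stage (via Lemma~\ref{lemma:segment}) that the composition still fixes $O$ and the already-aligned initial segment of the geodesic. Once this is set up cleanly the rest is routine: apply the transitivity to conjugate the collineation $\beta$ produced by Proposition~\ref{proposition:h1col} into one with the prescribed axis $\ell$ and center $P$, and observe that it remains a non-trivial $\hh 1$-collineation because conjugation in $\PsiPsi n(O)$ preserves triviality of the $(n-1)$-level action and non-triviality of the collineation itself.
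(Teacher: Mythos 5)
There is a genuine gap, and it sits exactly where you locate your ``main obstacle''. Your plan reduces the lemma to the claim that $\PsiPsi n(O)$ acts transitively on the incident point--line pairs of $\HH n(O)$, and you assert that this follows from chamber-transitivity by induction on $n$. The base case $n=1$ is fine (a type-preserving automorphism carrying one chamber at $O$ to another chamber at $O$ must fix $O$), but the inductive step does not go through: to move the last vertex of one flag onto that of another while fixing $O$ and the already-aligned geodesic, you need the \emph{pointwise stabilizer} of a gallery of length roughly $2(n-1)$ to act transitively on its extensions. That is a Weyl-transitivity-type condition, which the introduction explicitly notes is strictly stronger than the standing hypotheses; in this paper such statements (the assertions $(B_N)$ inside the proof of Proposition~\ref{proposition:bigpiece}) are only established later, under the stronger non-$(2n+4)$-discreteness assumption, and their proof is the technical heart of the whole argument (Kantor's theorem, the existence of many elations, and an intertwined induction on $(A_N)$, $(B_N)$, $(C_N)$). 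So the level-$n$ flag-transitivity your plan relies on is neither available at this stage nor derivable from chamber-transitivity alone, and citing Lemma~\ref{lemma:segment} for ``bookkeeping'' does not supply it.

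The missing idea that makes the lemma immediate is Lemma~\ref{lemma:changeaxis}: being a $\hh 1$-collineation with axis $\ell$ and center $P$ depends on $(P,\ell)$ only through $(\pipi 1(P),\pipi 1(\ell))$, i.e.\ only through the chamber of $\Delta$ with vertices $O$, $\pipi 1(P)$, $\pipi 1(\ell)$. Hence one need not match flags at level $n$ at all. Take the non-trivial $\hh 1$-collineation $\alpha\in\PsiPsi n(O)$ with axis $\ell'$ and center $P'$ provided by Proposition~\ref{proposition:h1col}, use chamber-transitivity to find $g\in\Aut(\Delta)^+$ carrying the chamber on $O$, $\pipi 1(P)$, $\pipi 1(\ell)$ to the one on $O$, $\pipi 1(P')$, $\pipi 1(\ell')$ (such a $g$ automatically fixes $O$), and observe that $g^{-1}\alpha g$ is a non-trivial $\hh 1$-collineation whose axis and center are respectively neighboring $\ell$ and $P$; Lemma~\ref{lemma:changeaxis} then converts it into one with axis exactly $\ell$ and center exactly $P$. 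This is the paper's short proof, and your argument should be repaired along these lines rather than by attempting the level-$n$ transitivity.
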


\begin{proof}
By Proposition~\ref{proposition:h1col}, there exists a non-trivial $\hh 1$-collineation $\alpha \in \PsiPsi n(O)$, say with axis $\ell' \in \LL n(O)$ and center $P' \in \PP n(O)$. Let $c$ (resp.\ $c'$) be the chamber of $\Delta$ with vertices $O$, $\pipi 1(\ell)$ and $\pipi 1(P)$ (resp.\ $O$, $\pipi 1(\ell')$ and $\pipi 1(P')$). Since $\Aut(\Delta)^+$ is chamber-transitive, there exists $g \in \Aut(\Delta)^+$ such that $g(c) = c'$. Then $g^{-1} \alpha g$ is a non-trivial $\hh 1$-collineation, and by Lemma~\ref{lemma:changeaxis} it has axis $\ell$ and center $P$.
\end{proof}

\begin{lemma}\label{lemma:ball1}
Let $n \geq 2$ and let $1 \leq k < n$. In the following, $O$ is a vertex of $\Delta$, $P$ is a point in $\PP n(O)$ and $\ell$ is a line in $\LL n(O)$ with $P \ \II n \ \ell$, $Q$ is a point in $\PP n(O)$ not near $\ell$, and $m \in \LL n(O)$ is the line incident with $P$ and $Q$.
\begin{enumerate}[(i)]
\item Suppose that for any $O$, $P$ and $\ell$, there exists a non-trivial $\hh 1$-collineation in $\PsiPsi {2n+k}(O)$ with axis $\ell$ and center $P$. Then for any $O$, $P$ and $\ell$, there exists an elation $\alpha \in \PsiPsi n(O)$ with axis $\ell$ and center $P$ such that $\alpha^{\star_{k-1}}$ is trivial but $\alpha^{\star_k}$ is non-trivial.
\item Suppose that for any $O$, $P$, $\ell$ and $Q$, the group of all $\hh 1$-collineations in $\PsiPsi {2n+k}(O)$ with axis $\ell$ and center $P$ acts transitively on the set of points $(2n+k-1)$-neighboring $Q$ and incident with $m$. Then for any $O$, $P$, $\ell$ and $Q$, the group of all elations $\alpha \in \PsiPsi n(O)$ with axis $\ell$ and center $P$ and with $\alpha^{\star_{k-1}}$ trivial is transitive on the set of points in $\PP k(O)$ which are $(k-1)$-neighboring $\pipi k(Q)$ and incident with $\pipi k(m)$.
\end{enumerate}
\end{lemma}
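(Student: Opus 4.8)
The plan is to prove both assertions by the same device: take a non-trivial $\hh 1$-collineation (or a transitive family of them) in $\PsiPsi{2n+k}(O)$ with the prescribed axis and center, and then \emph{push it down} to $\HH n(O)$ along an appropriate sub-configuration, recording carefully which level of the tower $\HH{2n+k} \to \cdots \to \HH n$ becomes trivial. The key geometric input is that a $\hh 1$-collineation $\beta$ of $\HH{2n+k}(O)$ with axis $L$ and center $R$, being trivial on $\HH{2n+k-1}(O)$ but not on $\HH{2n+k}(O)$, acts non-trivially (by Lemma~\ref{lemma:411easy}) on the whole geodesic configuration strictly between $R$ and $L$, and in particular on every $\HH m(O')$ for $O'$ a suitably-placed vertex on that geodesic. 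One chooses $O'$ at combinatorial distance so that $\HH n(O')$ ``sees'' the levels $k-1$ and $k$ of the original plane as its own levels $2n+\cdots$; more precisely, I would pick the vertex $O'$ on the geodesic from $R$ to $L$ so that $\beta$ restricted to $\HH n(O')$ has axis and center the appropriate projections, is trivial on $\HH{k-1}$ but not on $\HH k$ of that restricted plane. Since in the hypotheses $O$, $P$, $\ell$ (and $Q$) are arbitrary and we have chamber-transitivity available through Lemma~\ref{lemma:h1col}, once we produce such a $\beta$ for \emph{one} choice we can conjugate it (using Lemma~\ref{lemma:changeaxis} to move the axis/center among neighbors) to obtain it for all choices, matching the quantifier structure ``for any $O, P, \ell$'' in the conclusion.

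For (i), concretely: fix $O, P, \ell$ and apply the hypothesis to the triple $(O, P, \ell)$ viewed inside $\HH{2n+k}(O)$ to get a non-trivial $\hh 1$-collineation $\beta$ there. The vertices $v_1(P,\ell), \ldots, v_{2n+k-1}(P,\ell)$ of the geodesic are all moved by $\beta$ (Lemma~\ref{lemma:411easy} together with the fact that, being a $\hh 1$-collineation, $\beta$ acts trivially below level $2n+k$ but, being non-trivial, must move something at level $2n+k$, hence moves the whole interior segment via Lemma~\ref{lemma:segment}). Let $O' := v_n(P,\ell)$, a vertex at geodesic distance $n$ from $P$; its type need not be controlled yet, but chamber-transitivity will fix that. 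Set $P' := \pipi n_{O'}$ of the appropriate endpoint and $\ell' :=$ the other; then $\alpha := \beta^{\star}$ acting on $\HH n(O')$ is an elation with axis $\ell'$ and center $P'$. Because $\beta$ was trivial on $\HH{2n+k-1}(O)$, the induced $\alpha$ is trivial on $\HH{k-1}(O')$; because $\beta$ moves $v_{n+k}(P,\ell)$ (which is an interior vertex when $k < n$, hence $n+k < 2n+k-1$ forces... wait—one must check $n+k \le 2n+k-1$, i.e.\ $n \ge 1$, which holds), $\alpha^{\star_k}$ is non-trivial. Finally, transport $O'$ to any prescribed vertex of the correct type and $(P', \ell')$ to any prescribed incident pair using chamber-transitivity and Lemma~\ref{lemma:changeaxis}; this yields the existence statement for all $O, P, \ell$.

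For (ii) one runs the identical reduction but at the level of the \emph{group} rather than a single element: the hypothesis gives a group $G_{2n+k}$ of $\hh 1$-collineations of $\HH{2n+k}(O)$ transitive on the relevant $(2n+k-1)$-neighboring points incident with $m$; passing to $\HH n(O')$ sends $G_{2n+k}$ onto a group $G_n$ of elations (with axis $\ell'$, center $P'$, and $\star_{k-1}$-trivial), and I claim the transitivity on points $(2n+k-1)$-neighboring $Q$ and incident with $m$ descends to transitivity on points of $\PP k(O)$ that are $(k-1)$-neighboring $\pipi k(Q)$ and incident with $\pipi k(m)$ — this is exactly the content of Lemma~\ref{lemma:12}(ii), which translates transitivity on $(n-1)$-neighboring points into transitivity on chambers along the geodesic, re-indexed here with $2n+k$ in place of $n$. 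The main obstacle, and the step I would spend the most care on, is the bookkeeping of indices: verifying that the vertex $O'$ sits at the right distance so that ``$(2n+k-1)$-neighboring at level $2n+k$'' becomes precisely ``$(k-1)$-neighboring $\pipi k(Q)$ at level $k$'' after restriction, and that the triviality of $\beta^{\star_{2n+k-1}}$ translates to triviality of $\alpha^{\star_{k-1}}$ but not $\alpha^{\star_k}$ — here the choice $2n+k$ (rather than, say, $n+k$) is what guarantees enough room on the geodesic for the restricted plane $\HH n(O')$ to live entirely in the interior and for Lemma~\ref{lemma:12} to apply with its full strength. Everything else is an application of Lemma~\ref{lemma:segment}, Lemma~\ref{lemma:411easy}, Lemma~\ref{lemma:changeaxis} and Lemma~\ref{lemma:12}, plus chamber-transitivity to handle the ``for any $O, P, \ell$'' quantifiers.
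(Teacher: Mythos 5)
Your proposal does not match the paper's argument and contains a concrete error that breaks part (i). You apply the hypothesis at the given vertex $O$ (after lifting $P,\ell$ to level $2n+k$), obtain $\beta$ with center $\tilde P$ and axis $\tilde\ell$, and then assert that the vertices $v_1(\tilde P,\tilde\ell),\dots,v_{2n+k-1}(\tilde P,\tilde\ell)$ ``are all moved by $\beta$.'' This is false: an elation fixes its own center (a point incident with the axis) and its own axis (a line incident with the center), hence fixes the unique geodesic between them pointwise. The vertices that are moved are those of the geodesics $v_i(Q,o)$ for $Q$ \emph{not near} the axis and $o$ not near the center (Lemmas~\ref{lemma:411easy} and~\ref{lemma:12}(i)); Lemma~\ref{lemma:triviality} produces \emph{some} such incident pair, never the center--axis pair. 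Consequently your vertex $O'=v_n(\tilde P,\tilde\ell)$ sits on a pointwise-fixed segment, and your argument that $\alpha^{\star_k}$ is non-trivial (``$\beta$ moves $v_{n+k}(\tilde P,\tilde\ell)$'') has no content. You also never verify that the restriction of $\beta$ to $\HH n(O')$ is an elation with the projected axis and center: the points of $\HH n(O')$ incident with the putative axis include vertices at distance up to $3n+k$ from $O$, i.e.\ outside the ball of radius $2n+k$ around $O$, where none of the known fixing properties of $\beta$ apply.

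The second structural problem is the final ``transport'' step. The lemma assumes no transitivity whatsoever; chamber-transitivity is a \emph{hypothesis} of Lemma~\ref{lemma:h1col}, not a consequence of it, and is not available here. Even granting it, conjugation only matches the axis and center of the transported elation with the prescribed $P\in\PP n(O)$ and $\ell\in\LL n(O)$ at level $1$, and Lemma~\ref{lemma:changeaxis} --- the only tool for upgrading a neighbouring axis/center to the exact one --- applies only to $\hh 1$-collineations, not to elations with $\alpha^{\star_k}$ non-trivial for $k<n$, which is precisely what part (i) must produce. The transport is in fact unnecessary: the hypothesis is itself a ``for any $O,P,\ell$'' statement, and the paper exploits exactly this. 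It fixes the prescribed $(O,P,\ell)$, lets $O'$ be the reflection of $O$ across the wall through $P$ and $\ell$, chooses $P'$ and $\ell'$ at distance $2n+k$ from $O'$ with $O'$ lying between $\ell$ and $P'$ (resp.\ between $P$ and $\ell'$), and applies the hypothesis at $(O',P',\ell')$. Then every point incident with $\ell$ and every line incident with $P$ in $\HH n(O)$ lies within radius $2n$ of $O'$, where $\beta^{\star_{2n}}$ is trivial, so the induced $\alpha$ is an elation with exactly the prescribed axis and center; triviality of $\alpha^{\star_{k-1}}$ follows from $B(O,k-1)\subseteq B(O',2n+k-1)$; and non-triviality of $\alpha^{\star_k}$ follows from Lemma~\ref{lemma:12}(i) applied to $\beta$ along a $Q$--$o$ geodesic passing within distance $k$ of $O$. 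Part (ii) is the same construction combined with Lemma~\ref{lemma:12}(ii).
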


\begin{proof}
Fix $O$, $\ell$ and $P$ and let $A$ be an appartment of $\Delta$ containing them (seen as vertices of $\Delta$). In $A$, we denote by $O'$ the reflection of $O$ over the line through $\ell$ and $P$ (see Figure~\ref{picture:ball1}). Also, $P'$ (resp.\ $\ell'$) is the vertex of $A$ at distance $2n+k$ from $O'$ such that $O'$ lies on the segment from $\ell$ to $P'$ (resp.\ from $P$ to $\ell'$). 

We first prove (i). By hypothesis, there exists a non-trivial $\hh 1$-collineation $\beta \in \PsiPsi {2n+k}(O')$ with axis $\ell'$ and center $P'$. We now consider the element $\alpha \in \PsiPsi n(O)$ induced by $\beta$. The fact that $\beta^{\star_{2n+k-1}}$ is trivial implies that $\alpha^{\star_{k-1}}$ is trivial. Also, it is clear from Lemma~\ref{lemma:12} (i) applied to $\beta$ that $\alpha^{\star_k}$ is non-trivial. There remains to show that $\alpha$ is an elation of $\HH n(O)$ with axis $\ell$ and center $P$, i.e.\ that $\alpha$ fixes all points incident with $\ell$ and all lines incident with $P$. This is actually also a consequence from the fact that $\beta^{\star_{2n+k-1}}$ (even $\beta^{\star_{2n}}$) is trivial. Indeed, all points incident with $\ell$ (and all lines incident with $P$) in $\HH n(O)$ correspond to vertices of $\Delta$ which are contained in $\HH{2n}(O')$ (more precisely in the convex hull of the vertices of $\Delta$ associated to $\PP{2n}(O')$ and $\LL{2n}(O')$).

\begin{figure}[t!]
\centering
\begin{pspicture*}(-4.5,-3.4)(4.5,7.6)
\psset{unit=0.7cm}

\psline(0,0)(-2,3.464)
\psline(0,0)(2,3.464)
\psline(-2,3.464)(2,3.464)
\psline(-2,3.464)(0,6.928)
\psline(2,3.464)(0,6.928)
\psline(-2,3.464)(-5,8.66)
\psline(0,6.928)(-4,6.928)
\psline(0,6.928)(-2,10.392)
\psline(-2,6.928)(-1,8.66)
\psline[linestyle=dashed](0,0)(-5,0)
\psline[linestyle=dashed](0,0)(-2.5,-4.33)
\psline[linestyle=dashed](0,0)(2.5,-4.33)

\psline(-0.5,7.794)(-1.5,7.794)
\psline(-0.5,7.794)(-1.4,8.56)
\psline(-1.5,7.794)(-1.4,8.56)

\psline(-4.5,7.794)(-5.5,7.794)
\psline(-4.5,7.794)(-5.4,8.56)
\psline(-5.5,7.794)(-5.4,8.56)
\psline(-5.5,7.794)(-5,8.66)

\psline[arrows=<->](-0.6,0.25)(-2.2,3.0212)
\rput(-1.7,1.5){$n$}
\psline[arrows=<->](-2.6,3.614)(-4.2,6.4852)
\rput(-3.75,4.964){$n$}
\psline[arrows=<->](-3.6,7.1)(-4.4,8.5383)
\rput(-3.67,7.96){$k$}
\psline[arrows=<->](-0.6,-0.25)(-2.84,-4.13) 
\rput(-2.7,-2){$2n+k$}

\psdot[linewidth=0.055](0,0)
\rput(0.55,0){$O'$} 
\psdot[linewidth=0.055](0,6.928)
\rput(0.49,6.928){$O$} 
\psdot[linewidth=0.055](-2,3.464)
\rput(-2.4,3.3){$P$} 
\psdot[linewidth=0.055](2,3.464)
\rput(2.3,3.3){$\ell$} 
\psdot[linewidth=0.055](-4,6.928)
\rput(-4.39,6.8){$m$} 
\psdot[linewidth=0.055](-2,10.392)
\rput(-2.36,10.5){$Q$} 
\psdot[linewidth=0.055](-5,8.66)
\rput(-5.20,8.98){$Q'$} 
\psdot[linewidth=0.055](-5,0)
\rput(-5.39,0.1){$m'$} 
\psdot[linewidth=0.055](-2.5,-4.33)
\rput(-2.64,-4.67){$P'$} 
\psdot[linewidth=0.055](2.5,-4.33)
\rput(2.7,-4.62){$\ell'$} 
\psdot[linewidth=0.055](-1,8.66)
\rput(-0.3,8.8){$\pipi k(Q)$} 
\psdot[linewidth=0.055](-2,6.928)
\rput(-2,6.55){$\pipi k(m)$} 
\end{pspicture*}
\caption{Illustration of Lemma~\ref{lemma:ball1}.}\label{picture:ball1}
\end{figure}

The reasoning is the same for (ii). Take $Q \in \PP n(O)$ a point not near $\ell$ and denote by $m \in \LL n(O)$ the line incident with $P$ and $Q$. Here also, we see $Q$ and $m$ as vertices of $\Delta$ and we can even assume that they belong to $A$. Let $Q'$ be the vertex of $A$ at distance $2n+k$ from $O'$, in the direction of $P$ and $m$. If $m' \in \LL{2n+k}(O')$ is the line incident with $P'$ and $Q'$ in $\HH{2n+k}(O')$, then the hypothesis states that the group of all $\hh 1$-collineations in $\PsiPsi{2n+k}(O')$ with axis $\ell'$ and center $P'$ acts transitively on the set of points $(2n+k-1)$-neighboring $Q'$ and incident with $m'$. Using Lemma~\ref{lemma:12} (ii) and as for (i), we obtain that the group of all elations $\alpha \in \PsiPsi n(O)$ with axis $\ell$ and center $P$ with $\alpha^{\star_{k-1}}$ trivial is transitive on the set of points in $\PP k(O)$ which are $(k-1)$-neighboring $\pipi k(Q)$ and incident with $\pipi k(m)$.
\end{proof}

The key result of this section is then the following.

\begin{proposition}\label{proposition:bigpiece}
Let $n \geq 1$ and suppose that $\Aut(\Delta)^+$ is non-$(2n+4)$-discrete and chamber-transitive. Let $O$ be a vertex in $\Delta$ and consider a point $P \in \PP n(O)$ and a line $\ell \in \LL n(O)$ with $P \ \II n \ \ell$. Let $Q \in \PP n(O)$ be a point not near $\ell$ and denote by $m \in \LL n(O)$ the line incident with $P$ and $Q$. Then the group of all $\hh 1$-collineations in $\PsiPsi n(O)$ with axis $\ell$ and center $P$ acts transitively on the set of points $(n-1)$-neighboring $Q$ and incident with $m$.
\end{proposition}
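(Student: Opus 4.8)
The plan is to prove Proposition~\ref{proposition:bigpiece} by induction on $n$, reducing the transitivity of $\hh 1$-collineations at level $n$ to transitivity statements at level $n-1$ via the ``reflection trick'' packaged in Lemma~\ref{lemma:ball1}, and using chamber-transitivity to upgrade the level-$1$ information coming from Corollary~\ref{corollary:Moufang} (which tells us $\PsiPsi 1(O)$ contains all elations, hence $\HH 1(O)$ is Moufang, so the $(P,\ell)$-transitivity holds at level $1$).

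For the base case $n = 1$, the set of points $0$-neighboring $Q$ and incident with $m$ is just the set of points incident with $m$ but different from $P$ (since $Q$ is not near $\ell$, it is in particular not neighboring $P$), and transitivity of the elations with axis $\ell$ and center $P$ on this set is exactly the $(P,\ell)$-transitivity of the Moufang plane $\HH 1(O)$, granted by Corollary~\ref{corollary:Moufang} under non-$4$-discreteness (which follows from non-$(2n+4)$-discreteness with $n=1$).

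For the inductive step, assume the statement at level $n-1$ (for all vertices $O$, all incident $(P,\ell)$, and all $Q$ not near $\ell$). I would exploit the non-$(2n+4)$-discreteness to produce, via Lemma~\ref{lemma:h1col}, non-trivial $\hh 1$-collineations in $\PsiPsi{2(n-1)+k}(O)$ with prescribed axis and center for the relevant $k$ (taking $k$ roughly up to $n-1$, so that $2(n-1)+k \leq 2n+4 - 3$ is covered), and then apply Lemma~\ref{lemma:ball1} to transfer these into elations $\alpha \in \PsiPsi n(O)$ with axis $\ell$, center $P$, prescribed $\alpha^{\star_{k-1}}$ trivial and $\alpha^{\star_k}$ non-trivial. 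The inductive hypothesis, fed through Lemma~\ref{lemma:ball1}~(ii), gives transitivity of the level-$(n-1)$ elations with $\alpha^{\star_{k-1}}$ trivial on suitable sets of $(k-1)$-neighboring points. The set of points $(n-1)$-neighboring $Q$ and incident with $m$ carries a filtration by the $i$-neighboring relations for $i = 1, \dots, n-1$; the group $G$ of all $\hh 1$-collineations in $\PsiPsi n(O)$ with axis $\ell$ and center $P$ preserves each stratum, and I would walk up this filtration, at each level using an appropriately constructed elation from $G$ (whose existence is guaranteed by the discreteness hypothesis through the above machinery) together with Lemma~\ref{lemma:free} (free action) to move between strata, composing with elements supplied by the inductive hypothesis to stay transitive within a stratum.

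The main obstacle I expect is the bookkeeping that makes the filtration argument actually close up: one must check that a level-$n$ $\hh 1$-collineation $\alpha$ acting on points $(n-1)$-neighboring $Q$ induces, on the quotients $\pipi i$, exactly the level-$i$ elations that the inductive hypothesis controls, and that the ``new'' freedom gained at the top stratum (points $(n-1)$-neighboring but not $(n-2)$-neighboring $Q$) is precisely what a fresh $\hh 1$-collineation with $\alpha^{\star_{n-2}}$ trivial, $\alpha^{\star_{n-1}}$ non-trivial can realize — this is where Lemma~\ref{lemma:12}~(ii) and Lemma~\ref{lemma:free} must be combined carefully, and where the chamber-transitivity is needed to ensure the constructed collineations have the axis and center we want rather than some conjugate. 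Verifying that the numeric bound $2n+4$ on the discreteness is enough to supply collineations at all the needed levels $2(n-1)+k$ simultaneously is the other point requiring care, though it is routine once the structure of the induction is fixed.
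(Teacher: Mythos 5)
Your base case and your inventory of tools are fine, but the inductive step as described would not close up, for two structural reasons. First, Lemma~\ref{lemma:ball1}~(ii) transfers information \emph{downward}: to obtain elations of $\HH{n'}(O)$ with $\alpha^{\star_{k-1}}$ trivial acting transitively at level $k$, its hypothesis is the transitivity of $\hh 1$-collineations at the \emph{higher} level $2n'+k$, not at a lower one. So it cannot be powered by an inductive hypothesis at level $n-1$; in the paper this lemma is invoked only \emph{after} Proposition~\ref{proposition:bigpiece} has been established at all levels up to $3n-1$, namely in the deduction of Theorem~\ref{maintheorem:A'} (which is also where the induction on $k$ over the quotients $\pipi k(Q)$ lives --- you appear to have folded that later argument into the proof of the proposition itself). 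Relatedly, the ``filtration'' you propose is vacuous: every two points $(n-1)$-neighboring $Q$ are $i$-neighboring each other for all $i\leq n-1$, so the set in question is a single fiber with no strata to walk up.

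Second, and more importantly, the key ingredient of the actual proof is absent from your plan. Given a non-trivial $\hh 1$-collineation $\alpha$ with axis $\ell$ and center $P$, it sends $Q$ to one specific point $S$; to reach an arbitrary target $R$ that is $(n-1)$-neighboring $Q$ and incident with $m$, one conjugates $\alpha$ by an automorphism $\beta$ fixing $Q$, $O$ and $\pipi 1(m)$ and sending $S$ to $R$. The existence of such a $\beta$ is an auxiliary statement $(B_{N})$ --- transitivity of the pointwise stabilizer of a gallery of type $ijkijk\ldots$ of length $2N$ on the chambers adjacent to its first panel --- which is genuinely stronger than chamber-transitivity and is proved by a simultaneous induction intertwined with the proposition itself ($(B_{N-1})$ plus the level-$N$ statement yields $(B_N)$, starting from $(B_1)$ supplied by Corollary~\ref{corollary:Moufang}). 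One further needs a repair step (the paper's assertion $(C_N)$, itself a consequence of $(B_{N-1})$ via Lemma~\ref{lemma:ball1}~(i)) producing an elation centered near $Q$ that restores the axis and center of $\beta\alpha\beta^{-1}$ to $\ell$ and $P$ while fixing $Q$ and $R$ by Lemma~\ref{lemma:quasi-elation}; an appeal to chamber-transitivity alone does not achieve this. Without an analogue of $(B_N)$ in your induction hypothesis, the step from ``some non-trivial $\hh 1$-collineation exists'' to ``the group of such collineations is transitive on the fiber'' has no mechanism behind it.
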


\begin{proof}
We introduce the three following assertions, all depending on $N \geq 1$ (actually $N \geq 2$ for $(C_N)$). Remark that $(A_n)$ is exactly what we need to prove.
\begin{enumerate}
\item[$(A_N)$] Let $O$ be a vertex in $\Delta$. Let $P \in \PP N(O)$ and $\ell \in \LL N(O)$ be such that $P \ \II N \ \ell$, let $Q \in \PP N(O)$ be a point not near $\ell$ and denote by $m$ the line incident with $P$ and $Q$. The group of all $\hh 1$-collineations in $\PsiPsi N(O)$ with axis $\ell$ and center $P$ acts transitively on the set of points $(N-1)$-neighboring $Q$ and incident with $m$. 
\item[$(B_N)$] Let $i, j, k$ be the three types of panels in some order and let $f$ be the word $ijkijkijk\!\ldots$ of length $2N$. Let $(c_0, c_1, \ldots, c_{2N})$ be a gallery of type $f$ in $\Delta$ (i.e.\ for each $1 \leq s \leq 2N$, the chambers $c_{s-1}$ and $c_s$ are adjacent and their common panel has type given by the $s^{\text{th}}$ letter of $f$). Then for any two chambers $d$ and $d'$ adjacent to both $c_0$ and $c_1$ (but different from them), there exists an automorphism of $\Delta$ fixing $c_0, c_1, \ldots, c_{2N}$ and sending $d$ to $d'$.
\item[$(C_N)$] Let $O$ be a vertex in $\Delta$. Let $P \in \PP N(O)$ and $\ell \in \LL N(O)$ be such that $P \ \II N \ \ell$, let $Q \in \PP N(O)$ be a point near $\ell$ but not neighboring $P$, and let $m, o \in \LL N(O)$ be two lines near $Q$ but not neighboring $\ell$. There exist a point $P' \in \PP N(O)$ $(N-1)$-neighboring $P$, a line $\ell' \in \LL N(O)$ neighboring $\ell$ (with $P' \ \II N \ \ell'$) and an elation in $\PsiPsi N(O)$ with axis $\ell'$ and center $P'$ sending $\pipi 1(m)$ to $\pipi 1(o)$.
\end{enumerate}

Note that $(A_1)$ is given by Corollary~\ref{corollary:Moufang}. It also follows from this corollary that $(B_1)$ is true. Indeed, if $O$ is the vertex of $\Delta$ adjacent to $c_0$, $c_1$ and $c_2$ (as defined in $(B_1)$), then having $\PsiPsi 1(O) \geq \mathrm{PSL}(3,q)$ implies the existence of an automorphism fixing $c_0, c_1, c_2$ and sending $d$ to $d'$. We now show different relations between $(A_N)$, $(B_N)$ and $(C_N)$.

\setcounter{claim}{0}

\begin{claim}\label{claim:A}
$(B_{N-1}) + (C_N) \Rightarrow (A_N)$ for each $2 \leq N \leq n$.
\end{claim}

\begin{claimproof*}
Let $O$, $P$, $\ell$, $Q$ and $m$ be as in $(A_N)$. Let also $R \in \PP N(O)$ be a point $(N-1)$-neighboring $Q$ and incident with $m$ (see Figure~\ref{picture:A}). We want to prove that there exists some $\hh 1$-collineation in $\PsiPsi N(O)$ with axis $\ell$ and center $P$, sending $Q$ to $R$.

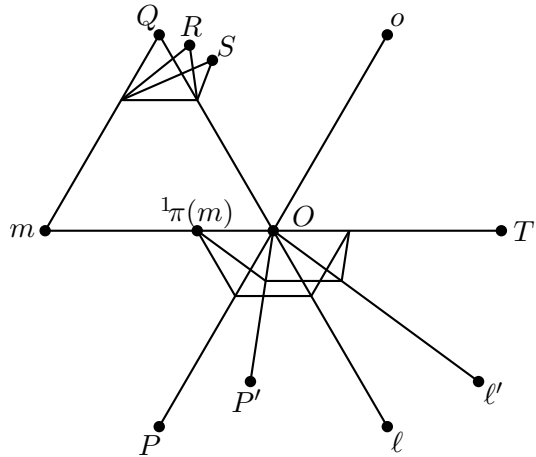
\begin{figure}[b!]
\centering
\begin{pspicture*}(-3.5,-3)(3.5,3)
\psset{unit=1cm}

\psline(0,0)(-1.5,2.598)
\psline(0,0)(1.5,2.598)
\psline(0,0)(-3,0)
\psline(0,0)(3,0)
\psline(0,0)(-1.5,-2.598)
\psline(0,0)(1.5,-2.598)

\psline(0,0)(-0.3,-2)
\psline(0,0)(2.7,-2)

\psline(-1,0)(-0.5,-0.866)
\psline(-0.5,-0.866)(0.5,-0.866)
\psline(0.5,-0.866)(1,0)

\psline(-1,0)(-0.1,-0.666)
\psline(-0.1,-0.666)(0.9,-0.666)
\psline(0.9,-0.666)(1,0)

\psline(-1,1.732)(-2,1.732)
\psline(-3,0)(-1.5,2.598)
\psline(-1,1.732)(-1.1,2.46)
\psline(-2,1.732)(-1.1,2.46)
\psline(-1,1.732)(-0.8,2.26)
\psline(-2,1.732)(-0.8,2.26)

\psdot[linewidth=0.04](0,0)
\rput(0.4,0.2){$O$} 
\psdot[linewidth=0.04](-3,0)
\rput(-3.3,0){$m$} 
\psdot[linewidth=0.04](-1.5,-2.598)
\rput(-1.63,-2.84){$P$} 
\psdot[linewidth=0.04](1.5,-2.598)
\rput(1.63,-2.8){$\ell$} 
\psdot[linewidth=0.04](3,0)
\rput(3.3,0){$T$} 
\psdot[linewidth=0.04](1.5,2.598)
\rput(1.63,2.8){$o$} 
\psdot[linewidth=0.04](-1.5,2.598)
\rput(-1.65,2.82){$Q$} 
\psdot[linewidth=0.04](-0.3,-2)
\rput(-0.35,-2.25){$P'$} 
\psdot[linewidth=0.04](2.7,-2)
\rput(2.9,-2.15){$\ell'$} 
\psdot[linewidth=0.04](-1,0)
\rput(-1,0.24){$\pipi 1(m)$} 
\psdot[linewidth=0.04](-1.1,2.46)
\rput(-1.08,2.71){$R$} 
\psdot[linewidth=0.04](-0.8,2.26)
\rput(-0.62,2.45){$S$} 

\end{pspicture*}
\caption{Illustration of Proposition~\ref{proposition:bigpiece}, Claim~\ref{claim:A}.}\label{picture:A}
\end{figure}

By Lemma~\ref{lemma:h1col}, there exists a non-trivial $\hh 1$-collineation $\alpha \in \PsiPsi N(O)$ with axis $\ell$ and center $P$. (Note that $\Aut(\Delta)$ is non-$(N+3)$-discrete because $N+3 \leq n+3 \leq 2n+2$.) By Lemma~\ref{lemma:411easy}, $\alpha$ sends $Q$ to some $S \neq R$. We know from $(B_{N-1})$ that there exists $\beta \in \Aut(\Delta)^+$ fixing $Q$, $O$ and $\pipi 1(m)$ and sending $S$ to $R$. Then $\beta \alpha \beta^{-1}$ sends $Q$ to $R$ (as desired) and is a $\hh 1$-collineation with axis $\ell'$ and center $P'$, with $\pipi 1(P') \ \II 1 \ \pipi 1(m)$. Now there are two different cases:
\begin{itemize}
\item If $\pipi 1(\ell') = \pipi 1(\ell)$, then also $\pipi 1(P') = \pipi 1(P)$, and hence $\beta \alpha \beta^{-1}$ is a $\hh 1$-collineation with axis $\ell$ and center $P$ in view of Lemma~\ref{lemma:changeaxis}.
\item If $\pipi 1(\ell') \neq \pipi 1(\ell)$, then denote by $T \in \PP N(O)$ the point incident with $\ell$ and $\ell'$ and by $o \in \LL N(O)$ the line incident with $Q$ and $T$. By $(C_N)$, there exist a point $Q' \in \PP N(O)$ $(N-1)$-neighboring $Q$, a line $o' \in \LL N(O)$ neighboring $o$ (with $Q' \ \II N \ o'$) and an elation $\gamma \in \PsiPsi N(O)$ with axis $o'$ and center $Q'$ sending $\pipi 1(\ell')$ to $\pipi 1(\ell)$. Note that $\gamma$ fixes $Q$ and $R$ by Lemma~\ref{lemma:quasi-elation}. Thus $\gamma (\beta \alpha \beta^{-1}) \gamma^{-1}$ is a $\hh 1$-collineation with axis $\ell$ and center $P$, and it sends $Q$ to $R$. (Remark that this argument is valid when $\pipi 1(P') = \pipi 1(P)$, even though Figure~\ref{picture:A} does not represent that case.)\hfill $\blacksquare$
\end{itemize}
\end{claimproof*}

\begin{claim}\label{claim:B}
$(B_{N-1}) + (A_N) \Rightarrow (B_N)$ for each $N \geq 2$.
\end{claim}

\begin{claimproof}
Let $i, j, k$, $w$, $(c_0,c_1,\ldots, c_{2N})$, $d$ and $d'$ be as in $(B_N)$. We must find an automorphism of $\Delta$ fixing $c_0, \ldots, c_{2N}$ and sending $d$ to~$d'$.

By $(B_{N-1})$, we already have some $g \in \Aut(\Delta)^+$ fixing $c_0, \ldots, c_{2N-2}$ and sending $d$ to $d'$. Denote by $c'_{2N-1}$ the image of $c_{2N-1}$ by $g$. Now taking $O$, $P$, $\ell$ and $Q$ as in Figure~\ref{picture:B1}, we can apply $(A_N)$ to get an element $h \in \Aut(\Delta)^+$ fixing $c_0, \ldots, c_{2N-2}$ as well as $d$ and $d'$ and sending $c'_{2N-1}$ to $c_{2N-1}$. So $hg$ sends $d$ to $d'$ and fixes $c_0, \ldots, c_{2N-1}$. Now we can use the same method one step further: if $c'_{2N}$ denotes the image of $c_{2N}$ by $hg$, then we can find thanks to $(A_N)$ (see Figure~\ref{picture:B2}) an element $h'$ fixing $c_0, \ldots, c_{2N-1}$, $d$ and $d'$ and sending $c'_{2N}$ to $c_{2N}$. The element $h'hg$ then fixes $c_0, \ldots, c_{2N}$ and sends $d$ to $d'$.
\begin{figure}[h!]
\begin{subfigure}{.49\textwidth}

\centering
\begin{pspicture*}(-3.3,-3)(1.7,3.8)
\psset{unit=1cm}

\psline(0,0)(-2,3.464)
\psline(0,0)(-1,0)
\psline(-1,0)(-2.5,2.598)
\psline(-2.5,2.598)(-2,3.464)
\psline(-1,0)(-0.5,0.866)
\psline(-0.5,0.866)(-1.5,0.866)
\psline[linestyle=dashed](-0.5,-0.866)(-1.5,-2.598)
\psline[linestyle=dashed](0,0)(1.5,-2.598)

\psline(0,0)(-0.5,-0.866)
\psline(-1,0)(-0.5,-0.866)
\psline(0,0)(-0.9,-0.766)
\psline(-1,0)(-0.9,-0.766)
\psline(0,0)(-1.2,-0.616)
\psline(-1,0)(-1.2,-0.616)

\psline(-2.5,2.598)(-1.5,2.598)
\psline(-2.5,2.598)(-1.6,3.314)
\psline(-1.5,2.598)(-1.6,3.314)

\psdot[linewidth=0.04](0,0)
\rput(0.19,0.22){$O$} 
\psdot[linewidth=0.04](-1.5,-2.598)
\rput(-1.62,-2.84){$P$} 
\psdot[linewidth=0.04](1.5,-2.598)
\rput(1.61,-2.81){$\ell$} 
\rput(-1,-0.97){$d$}
\rput(-1.35,-0.8){$d'$}
\rput(-0.33,-1.05){$c_0$}
\rput(-0.5,0.26){$c_1$}
\rput(-1,0.53){$c_2$}
\psdot[linewidth=0.04](-2,3.464)
\rput(-2.22,3.6){$Q$} 
\rput(-2.77,3.1){$c_{2N-1}$}
\rput(-1.00,3){$c'_{2N-1}$}

\end{pspicture*}
\caption{From $c_{2N-1}$ to $c'_{2N-1}$.}\label{picture:B1}

\end{subfigure}
\begin{subfigure}{.49\textwidth}

\centering
\begin{pspicture*}(-1.7,-3)(2.9,3.8)
\psset{unit=1cm}

\psline(-0.5,-0.866)(2,3.464)
\psline(0,0)(1,0)
\psline(0.5,-0.866)(2.5,2.598)
\psline(2.5,2.598)(2,3.464)
\psline(1,0)(0.5,0.866)
\psline(0.5,0.866)(1.5,0.866)
\psline(0,0)(0.5,-0.866)
\psline(-0.5,-0.866)(0.5,-0.866)
\psline[linestyle=dashed](0.5,-0.866)(1.5,-2.598)
\psline[linestyle=dashed](-0.5,-0.866)(-1.5,-2.598)

\psline(0,0)(-0.65,-0.55)
\psline(0.5,-0.866)(-0.65,-0.55)
\psline(0,0)(-0.6,-0.2)
\psline(0.5,-0.866)(-0.6,-0.2)

\psline(2.5,2.598)(1.5,2.598)
\psline(2.5,2.598)(1.6,3.314)
\psline(1.5,2.598)(1.6,3.314)

\psdot[linewidth=0.04](0,0)
\rput(-0.14,0.22){$O$} 
\psdot[linewidth=0.04](-1.5,-2.598)
\rput(-1.63,-2.82){$\ell$} 
\psdot[linewidth=0.04](1.5,-2.598)
\rput(1.63,-2.83){$P$} 
\rput(-0.82,-0.57){$d$}
\rput(-0.78,-0.14){$d'$}
\rput(0,-1.08){$c_0$}
\rput(0.5,-0.32){$c_1$}
\rput(0.5,0.26){$c_2$}
\rput(1,0.53){$c_3$}
\psdot[linewidth=0.04](2,3.464)
\rput(2.23,3.6){$Q$} 
\rput(2.6,3.11){$c_{2N}$}
\rput(1.2,3){$c'_{2N}$}

\end{pspicture*}
\caption{From $c_{2N}$ to $c'_{2N}$.}\label{picture:B2}
\end{subfigure}
\caption{Illustration of Proposition~\ref{proposition:bigpiece}, Claim~\ref{claim:B}.}\label{picture:B}
\end{figure}
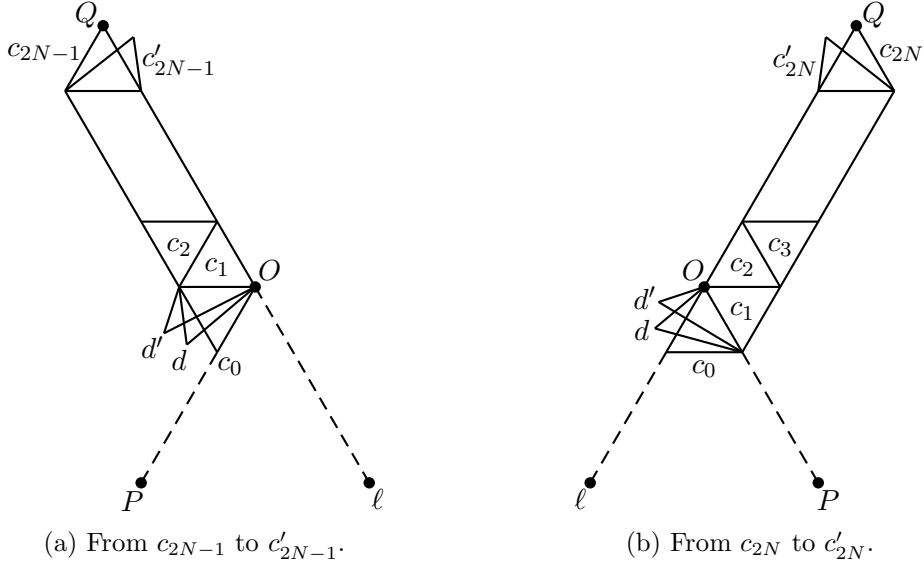
\end{claimproof}

\begin{claim}\label{claim:C}
$(B_{N-1}) \Rightarrow (C_N)$ for each $2 \leq N \leq n$.
\end{claim}

\begin{claimproof}
Let $O$, $P$, $\ell$, $Q$, $m$ and $o$ be as in $(C_N)$. We must find an elation in $\PsiPsi N(O)$ sending $\pipi 1(m)$ to $\pipi 1(o)$, with axis $\ell'$ and center $P'$ where $\ell'$ is neighboring $\ell$ and $P'$ is $(N-1)$-neighboring $P$.
\begin{figure}[t!]
\centering
\begin{pspicture*}(-2.5,-3)(3.7,3)
\psset{unit=1cm}

\psline(0,0)(-1.5,-2.598)
\psline(0,0)(1.5,-2.598)
\psline(0,0)(3,0)
\psline(0,0)(1.5,2.598)

\psline(0.5,0.866)(1,0)
\psline(0,0)(2.55,2.148)
\psline(1,0)(0.85,0.716)
\psline(0,0)(3.3,1.548)
\psline(1,0)(1.1,0.516)

\psline(-0.5,-0.866)(0.5,-0.866)
\psline(-1.5,-2.598)(-0.5,-2.598)
\psline(-0.5,-2.598)(-1,-1.732)
\psline(-0.5,-2.598)(0.5,-0.866)
\psline(-1,-1.732)(0,-1.732)

\psdot[linewidth=0.04](0,0)
\rput(-0.23,0.14){$O$} 
\psdot[linewidth=0.04] (-1.5,-2.598)
\rput(-1.64,-2.86){$P$} 
\psdot[linewidth=0.04](1.5,-2.598)
\rput(1.63,-2.83){$\ell$} 
\psdot[linewidth=0.04](3,0)
\rput(3.34,0){$Q$} 
\psdot[linewidth=0.04](1.5,2.598)
\rput(1.63,2.82){$o$} 
\psdot[linewidth=0.04](2.55,2.148)
\rput(2.79,2.33){$m$} 
\psdot[linewidth=0.04](3.3,1.548)
\rput(3.55,1.58){$p$} 
\psdot[linewidth=0.04](0.5,-0.866)
\rput(1,-0.82){$\pipi 1(\ell)$} 
\psdot[linewidth=0.04](-1,-1.732)
\rput(-1.82,-1.62){$\pipi {N-1}(P)$} 

\end{pspicture*}
\caption{Illustration of Proposition~\ref{proposition:bigpiece}, Claim~\ref{claim:C}.}\label{picture:C}
\end{figure}
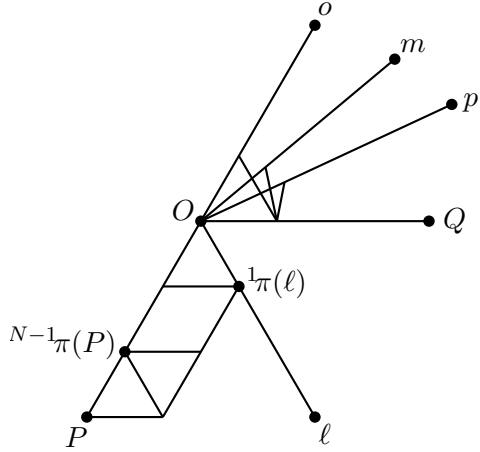

By Lemma~\ref{lemma:h1col} (with $2N+1$) and Lemma~\ref{lemma:ball1} (i) (with $N$), there exists an elation $\alpha \in \PsiPsi N(O)$ with axis $\ell$ and center $P$ and such that $\alpha^{\star_1}$ is non-trivial. In view of Lemma~\ref{lemma:411easy} (applied in $\HH 1(O)$), if $p$ denotes the image of $m$ by $\alpha$, then $\pipi 1(p) \neq \pipi 1(m)$. By $(B_{N-1})$, there exists $g \in \Aut(\Delta)^+$ fixing $\pipi{N-1}(P)$, $\pipi 1(\ell)$ and $\pipi 1(m)$ and sending $\pipi 1(p)$ to $\pipi 1(o)$ (see Figure~\ref{picture:C}). Then $g \alpha g^{-1}$ is an elation with axis $g(\ell)$ and center $g(P)$ which sends $\pipi 1(m)$ to $\pipi 1(o)$. Since $g(\ell)$ is neighboring $\ell$ and $g(P)$ is $(N-1)$-neighboring $P$, we are done.
\end{claimproof}

\medskip

Claims~\ref{claim:A} and~\ref{claim:C} together imply that $(B_{N-1}) \Rightarrow (A_N)$ for each $2 \leq N \leq n$ $(*)$, so that Claim~\ref{claim:B} then reads as $(B_{N-1}) \Rightarrow (B_N)$ for each $2 \leq N \leq n$. From $(B_1)$ we therefore get $(B_N)$ for all $1 \leq N \leq n$, and hence $(A_N)$ is true for all $2 \leq N \leq n$ by $(*)$. (Remember that $(A_1)$ was already true.)
\end{proof}

\begin{proof}[Proof of Theorem~\ref{maintheorem:A'}]
Suppose that $\Aut(\Delta)$ is non-$(6n+2)$-discrete. By Proposition~\ref{proposition:panel}, $\Aut(\Delta)^+$ is chamber-transitive (because $6n+2 \geq 4$). We want to prove that $\HH n(O)$ is Moufang for each vertex $O$ in $\Delta$.

Consider $P \in \PP n(O)$ and $\ell \in \LL n(O)$ with $P \ \II n \ \ell$. We need to show that $\HH n(O)$ is $(P,\ell)$-transitive. Let $m \in \LL n(O)$ be incident with $P$ but not neighboring $\ell$ and let $Q, R \in \PP n(O)$ be incident with $m$ but not neighboring $P$. We must find an elation of $\HH n(O)$ with axis $\ell$ and center $P$ sending $Q$ to $R$. We actually show by induction on $k$ that, for each $0 \leq k \leq n$, there exists an elation with axis $\ell$ and center $P$ sending $\pipi k(Q)$ to $\pipi k(R)$. For $k = 0$ we can take the identity (because $\pipi 0(Q) = \pipi 0(R) = O$ by convention). Now consider $1 \leq k \leq n$ and assume that this is true for $k-1$. Thus there is an elation $\alpha$ with axis $\ell$ and center $P$ such that $\alpha(\pipi{k-1}(Q)) = \pipi{k-1}(R)$. Denote by $Q'$ the image of $Q$ by $\alpha$. Then $Q'$ is $(k-1)$-neighboring $R$ and incident with $m$, and it suffices to find an elation with axis $\ell$ and center $P$ sending $\pipi k(Q')$ to $\pipi k(R)$. For $k = n$, such an elation exists by Proposition~\ref{proposition:bigpiece}, and for $k < n$ we need this same proposition (with $2n+k$) together with Lemma~\ref{lemma:ball1} (ii). (Note for Proposition~\ref{proposition:bigpiece} that $2(2n+k)+4 \leq 6n+2$ when $k \leq n-1$.)
\end{proof}

\section{Vertex-transitive \texorpdfstring{$\tilde{A}_2$-buildings}{triangle buildings}}

The goal of this section is to prove Theorem~\ref{maintheorem:B'} below.

\begin{primetheorem}{maintheorem:B}\label{maintheorem:B'}
Let $\Delta$ be a locally finite thick $\tilde{A}_2$-building. Suppose that $\Aut(\Delta)$ is transitive on vertices and unimodular, that $\Aut(\Delta)^+$ is transitive on vertices of each type, and that $\Delta$ has thickness $p+1$ for some prime $p$. Then for each $n \geq 1$, at least one of the following assertions holds:
\begin{enumerate}[(a)]
\item $\Delta$ is $n$-Moufang, or
\item $\Aut(\Delta)$ is $(6n+2)$-discrete.
\end{enumerate}
\end{primetheorem}

We will once again suppose that $\Aut(\Delta)$ is non-$(6n+2)$-discrete and, under the hypotheses of Theorem~\ref{maintheorem:B'}, prove that $\Aut(\Delta)^+$ must be transitive on panels of each type. The conclusion will then follow from Theorem~\ref{maintheorem:A'}.

\subsection{About finite projective planes}

We begin with several lemmas concerning finite projective planes. They will become useful later in the section. The first lemma is classical.

\begin{lemma}\label{lemma:transitive}
Let $\Pi$ be a finite projective plane and let $F$ be a collineation group of $\Pi$. Then $F$ is transitive on points of $\Pi$ if and only if $F$ is transitive on lines of $\Pi$.
\end{lemma}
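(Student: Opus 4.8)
The plan is to use a standard double-counting argument combined with the self-duality properties of finite projective planes. Recall that a finite projective plane $\Pi$ of order $q$ has exactly $q^2+q+1$ points and $q^2+q+1$ lines, that every point lies on $q+1$ lines, and that every line contains $q+1$ points. These parameters are the key numerical inputs.

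First I would observe that $\Pi$ admits a \emph{correlation}, i.e.\ an incidence-reversing permutation of the set of points and lines; indeed, for Desarguesian planes this is the classical duality coming from the dual vector space, and more generally one can invoke the fact that the incidence structure of a projective plane is isomorphic to its dual (this is a well-known fact about finite projective planes, though I should be a little careful, since not every projective plane is self-dual; the cleaner route avoids this). So rather than appeal to self-duality, the cleaner approach is purely orbit-counting: suppose $F$ is transitive on points. Fix a line $\ell$ of $\Pi$. Then $F$ acts on the set of lines, and I want to show this action is transitive. Consider the incidence relation as a bipartite set $I \subseteq \mathcal{P} \times \mathcal{L}$ of size $(q^2+q+1)(q+1)$. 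Since $F$ preserves $I$ and is transitive on $\mathcal{P}$, every point is incident to the same number of lines in any given $F$-orbit on lines; more precisely, if $\mathcal{L} = \mathcal{O}_1 \sqcup \cdots \sqcup \mathcal{O}_r$ is the decomposition into $F$-orbits on lines, then by transitivity on points, the number $n_i$ of lines of $\mathcal{O}_i$ through a given point $p$ is independent of $p$, and $\sum_i n_i = q+1$. Counting incidences within $\mathcal{O}_i$ in two ways gives $(q^2+q+1) \cdot n_i = |\mathcal{O}_i| \cdot (q+1)$, so $|\mathcal{O}_i| = \frac{(q^2+q+1) n_i}{q+1}$.

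The main obstacle is then to deduce that $r = 1$, i.e.\ that there is only one orbit. From $|\mathcal{O}_i| = \frac{(q^2+q+1) n_i}{q+1}$ and $\sum_i |\mathcal{O}_i| = q^2+q+1$ we recover $\sum_i n_i = q+1$, which is no new information, so a bare counting argument is insufficient — I need a structural input. The right tool is the classical fact (essentially a theorem of Block, or a consequence of the Brauer permutation lemma applied to the incidence matrix) that for a finite projective plane, any collineation group has \emph{equally many} orbits on points as on lines. This follows because the incidence matrix $A$ of $\Pi$ is nonsingular (indeed $A A^{\mathsf{T}} = qI + J$ has determinant $(q+1)^2 q^{q^2+q}\neq 0$), and for a permutation group $F$ acting on both sides preserving $A$, the permutation representations on points and on lines have the same number of fixed points for each $g \in F$ (their characters satisfy $\chi_{\mathcal{L}}(g) = \mathrm{tr}(P_g^{-\mathsf T}) $-type identities forcing $\chi_{\mathcal P}(g) = \chi_{\mathcal L}(g)$ via nonsingularity of $A$), hence by Burnside's lemma the same number of orbits. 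So if $F$ is transitive on points, it has one orbit on points, hence one orbit on lines, i.e.\ it is transitive on lines; and the converse is symmetric. I would present this via the Brauer permutation lemma: if $A$ is an invertible matrix and $P, Q$ are permutation matrices with $PAQ = A$, then $P$ and $Q$ have the same number of eigenvalue-$1$ eigenvectors, hence $P$ and $Q^{-1}$ are similar as permutation matrices; applying this with $P = $ the point-permutation and $Q^{-1}=$ the line-permutation of each $g \in F$ gives $\mathrm{Fix}_{\mathcal P}(g) = \mathrm{Fix}_{\mathcal L}(g)$, and Burnside finishes it. This is the step I expect to require the most care to state cleanly, since it is the one genuinely non-elementary ingredient.
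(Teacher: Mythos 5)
Your argument is correct and rests on exactly the same key fact as the paper: that any collineation group of a finite projective plane has equally many orbits on points as on lines. The paper simply cites this as \cite{Hughes-Piper}*{Theorem~13.4}, whereas you supply its standard proof (nonsingularity of the incidence matrix via $AA^{\mathsf T}=qI+J$, Brauer's permutation lemma, and Burnside); the preliminary detours through self-duality and the orbit-size count are unnecessary but harmless.
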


\begin{proof}
It is actually true that, for any collineation group $F$ of a finite projective plane $\Pi$, $F$ has as many point orbits as line orbits, see~\cite{Hughes-Piper}*{Theorem~13.4}.
\end{proof}

The following lemma is also classical but, because of the lack in finding a suitable reference, we give its proof here.

\begin{lemma}\label{lemma:fixpoint}
Let $\Pi$ be a finite projective plane of prime order and let $F$ be a collineation group of $\Pi$. Suppose that $F$ contains a non-trivial elation. Then either $F$ is transitive on points of $\Pi$ or $F$ fixes a point or a line of $\Pi$.
\end{lemma}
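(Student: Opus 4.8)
The plan is to exploit the structure of elations in a projective plane of prime order $p$ together with a counting/orbit argument. First I would fix a non-trivial elation $\tau \in F$, say with center $C$ and axis $a$ (so $C \ \II 1 \ a$). Since $p$ is prime, $\tau$ has order $p$ (its order divides $p$ by Lemma~\ref{lemma:cycles}, applied to the action on the $p$ points of a line through $C$ distinct from $a$, which is fixed-point-free of order a power of $p$, hence exactly $p$), so the cyclic group $\langle \tau \rangle$ has order $p$. The key observation is that the set $E$ of all centers of non-trivial elations in $F$ together with the set of all axes is highly constrained: two elations with distinct axes generate, via their commutator, further elations, and in a plane of prime order one cannot have ``too many'' independent elation directions without the elation group becoming large and forcing transitivity.

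The main case division I would set up is according to the set of axes (equivalently centers, by duality and Lemma~\ref{lemma:transitive}) of non-trivial elations in $F$. Concretely: let $\mathcal{A}$ be the set of axes of non-trivial elations in $F$ and $\mathcal{C}$ the set of centers. I would first handle the case where all non-trivial elations in $F$ share a common axis $a$ (or dually a common center): then $a$ (resp.\ the center) is fixed by every elation, and one checks that $a$ is in fact fixed by all of $F$, because $F$ normalizes the group generated by its elations, hence permutes the set of axes, which here is $\{a\}$. That gives the ``fixes a line'' conclusion. Symmetrically if all share a center we get ``fixes a point.'' The remaining case is that there exist two non-trivial elations with distinct axes; here I would aim to show $F$ is point-transitive. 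The standard fact (a theorem essentially due to Tits / part of the Moufang machinery, but provable directly in the finite prime-order case) is that the group generated by all elations with a \emph{fixed} axis $a$ acts transitively on the points not on $a$ once there are elations with at least two distinct centers on $a$; combined with a second axis this propagates transitivity to all points. Since $p$ is prime, the elation group with axis $a$ and varying center embeds into a group of order dividing $p^2$ acting on the $p^2$ points off $a$, so as soon as it is non-trivial in ``enough'' directions it is regular on those $p^2$ points — and then a third noncollinear fixed structure is impossible, forcing genuine point-transitivity.

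The step I expect to be the main obstacle is the passage from ``two elations with distinct axes'' to ``$F$ transitive on points'': one must rule out intermediate configurations where the elations generate a group fixing, say, a point–line flag or a triangle, so that $F$ still fixes that configuration rather than acting transitively. The clean way to handle this is to use that in a projective plane of prime order $p$, the full elation group with a given axis has order at most $p^2$ and, if nontrivial with two distinct centers, is elementary abelian of order $p^2$ acting regularly on the affine points off that axis (this is where primality is essential — it prevents partial/non-regular behavior); then a non-trivial elation with a \emph{different} axis cannot normalize a proper $F$-invariant subconfiguration, and one concludes transitivity. I would also invoke Lemma~\ref{lemma:transitive} freely to move between the point and line pictures, and Lemma~\ref{lemma:411easy} (together with Lemma~\ref{lemma:cycles}) to control fixed points of powers of elations. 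The bookkeeping is a finite-geometry computation rather than anything deep, but getting the case analysis exhaustive and correctly using primality at the one crucial spot is the delicate part.
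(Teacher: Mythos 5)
Your opening moves are fine: primality plus Lemma~\ref{lemma:cycles} does give that a non-trivial elation is transitive on each punctured line through its center other than the axis, and the case where all non-trivial elations of $F$ share one axis (or one center) is correctly dispatched by noting that $F$ permutes the set of axes (resp.\ centers) by conjugation. The genuine gap is in your ``remaining case'': it is simply not true that the existence of elations with distinct axes and distinct centers forces $F$ to be point-transitive. Concretely, in $\mathrm{PG}(2,p)$ take two distinct lines $\ell$ and $m$, set $P_1=\ell\cap m$, pick $P_2$ on $m$ with $P_2\neq P_1$, and let $F$ be generated by the elations with center $P_1$ and axis $\ell$ together with those with center $P_2$ and axis $m$. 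Every generator fixes $P_1$ (it lies on both axes), so $F$ fixes $P_1$ and is far from point-transitive, yet it exhibits two distinct axes and two distinct centers. The lemma is still true for this $F$ --- it lands in the ``fixes a point'' branch --- but your strategy of \emph{proving transitivity} in the remaining case cannot succeed. Two further weaknesses compound this: the ``standard fact'' you invoke needs a single axis $a$ carrying elations with two distinct centers, which need not occur in the remaining case; and even when it does occur together with a second axis $b$, every elation involved fixes the point $a\cap b$, so the subgroup generated by elations alone is never point-transitive and no conclusion about $F$ itself follows without extra input.

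The paper's proof sidesteps all of this by arguing directly with the orbit structure rather than with the subgroup generated by elations. One colours the points by their $F$-orbits; the observation you also made (each punctured line through the center $P$, other than the axis $\ell$, is monochromatic) then severely constrains the colouring, and a short case analysis on where the colour class of $P$ sits --- only at $P$, only on $\ell$, or containing some $P'$ off $\ell$ --- produces in each branch either a fixed point, a fixed line, or a single colour (i.e.\ transitivity). If you want to salvage your approach, you would have to replace ``show $F$ is transitive'' by a genuine trichotomy in the remaining case, analysing the $F$-invariant configuration formed by the sets of axes and centers (e.g.\ whether the axes are concurrent, whether some axis carries two centers, etc.); that can be made to work but is considerably more case-heavy than the colouring argument.
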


\begin{proof}
We color the points of $\Pi$ according to their orbit under the action of $F$. Let us suppose that $F$ is not transitive on points of $\Pi$, i.e.\ that there are at least $2$ colors. Let us denote by $P$ and $\ell$ the center and axis of a non-trivial elation $\alpha$ in $F$. By Lemma~\ref{lemma:cycles} and since $\Pi$ has prime order, for each line $o$ incident to $P$ and different from $\ell$, the elation $\alpha$ is transitive on points incident to $o$ and different from $P$. Thus, for each such $o$, all points incident to $o$ and different from $P$ have the same color $(*)$. Now let us distinguish several cases:
\begin{itemize}
\item If $P$ has a color that no other point has, then $P$ is fixed by $F$.
\item Otherwise, and if the only points with the same color as $P$ are incident to $\ell$, then $\ell$ is fixed by $F$.
\item Now assume that there exists a point $P'$ not incident to $\ell$ but with the same color as $P$. This means that there exists $\beta \in F$ with $\beta(P) = P'$. Denote by $m$ the line through $P$ and $P'$, and write $\ell' = \beta(\ell)$. Note that, in view of $(*)$, for each line $o'$ incident to $P'$ and different from $\ell'$, all points incident to $o'$ and different from $P'$ have the same color $(**)$. See Figure~\ref{picture:fixpoint} for an illustration.
\begin{itemize}
\item If $\ell' = m$, we deduce from $(*)$, $(**)$ and the fact that $\beta(\ell) = \ell'$ that all points have the same color, which is a contradiction.
\item If $\ell' \neq m$, then we obtain from $(*)$ and $(**)$ that all points incident to $m$ have the same color, say $c_1$, and that all points not incident to $m$ but different from $Q = \ell \cap \ell'$ have the same color, say $c_2$. We write $c_3$ for the color of $Q$. If $c_3 \neq c_1, c_2$, then $Q$ is the only point with color $c_3$ so it is fixed by $F$. If $c_3 = c_2$, then $c_1 \neq c_2$ (because there are at least two colors), and $m$ is fixed by $F$. Finally, if $c_3 = c_1$, then $c_1 \neq c_2$ and there should exist $\gamma \in F$ with $\gamma(P) = Q$. But this gives a contradiction with the coloring. \qedhere
\end{itemize}
\end{itemize}
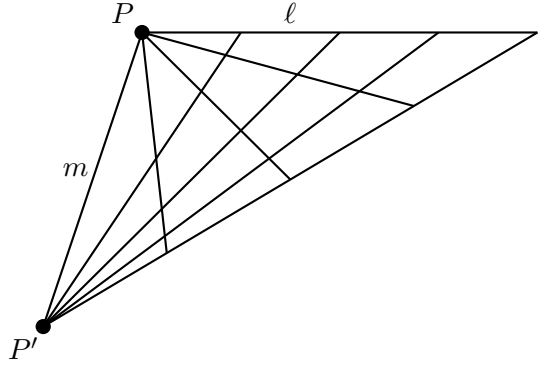
\begin{figure}[t!]
\centering
\begin{pspicture*}(-2,-4.4)(5.3,0.5)
\psset{unit=1.3cm}

\psline(0,0)(4,0)
\psline(0,0)(-1,-3)
\psline(-1,-3)(1,0)
\psline(-1,-3)(2,0)
\psline(-1,-3)(3,0)
\psline(-1,-3)(4,0)
\psline(0,0)(0.25,-2.25)
\psline(0,0)(1.5,-1.5)
\psline(0,0)(2.75,-0.75)

\psdot[linewidth=0.05](0,0)
\psdot[linewidth=0.05](-1,-3)

\rput(-0.2,0.2){$P$}
\rput(-1.2,-3.22){$P'$}
\rput(-0.67,-1.4){$m$}
\rput(1.5,0.2){$\ell$}

\end{pspicture*}
\caption{Illustration of Lemma~\ref{lemma:fixpoint}.}\label{picture:fixpoint}
\end{figure}
\end{proof}

We conclude with a third lemma about finite projective planes of prime order which can be applied in some really precise situation.

\begin{lemma}\label{lemma:3colors}
Let $\Pi$ be a finite projective plane of prime order and $F$ be a collineation group of $\Pi$. Suppose that $F$ contains a non-trivial elation and that $F$ fixes exactly one point $Q$ and one line $m$, with $Q$ not incident to $m$. Then $F$ is transitive on points incident to $m$ and transitive on points not incident to $m$ but different from $Q$.
\end{lemma}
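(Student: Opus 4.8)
The plan is to color the points of $\Pi$ by their $F$-orbit and show that only two colors can occur: one for the points on $m$ and one for the points off $m$ (other than $Q$). Let $P$ and $\ell$ be the center and axis of a non-trivial elation $\alpha \in F$. Exactly as in the proof of Lemma~\ref{lemma:fixpoint}, since $\Pi$ has prime order, Lemma~\ref{lemma:cycles} gives that $\alpha$ acts transitively on the points of any line through $P$ distinct from $\ell$ (minus $P$); hence all points on such a line other than $P$ share a color $(*)$. The first thing I would pin down is where $P$ and $\ell$ sit relative to $Q$ and $m$: since $F$ fixes $m$ and $Q$, any $g \in F$ must send $\{P,\ell\}$-configurations to $\{gP, g\ell\}$-configurations that are again centers/axes of elations in $F$, and the fixed line $m$ and fixed point $Q$ constrain things. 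In particular I would argue that $P$ must be incident to $m$ or equal to nothing forced — actually the cleaner route is: $F$ fixes $m$ and $Q$ but fixes \emph{no other} point or line, so in particular $\ell \neq m$ would be possible only if $F$ does not fix $\ell$, meaning the $F$-orbit of $\ell$ is nontrivial; and $P \neq Q$ similarly unless forced. I expect the key structural fact to extract is that $P$ lies on $m$: if $P$ were off $m$, then $P \neq Q$ (as $Q \notin m$ would still allow $P=Q$, so one rules that out because $Q$ is fixed but $P$ need not be) and one derives a fixed point or a contradiction with the "exactly one fixed point" hypothesis.

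Concretely, here is the order of steps I would carry out. First, observe $P \in m$: otherwise the line $PQ$ is $F$-invariant-ish — more carefully, use $(*)$ together with an element moving $P$ (which exists since $P$ is not fixed, as the only fixed point is $Q \neq P$ when $P \notin m$... one must check $P \neq Q$, which holds because $P \notin m$ does not immediately give $P \neq Q$, but $Q \notin m$ so if $P \notin m$ then $P$ could be $Q$; rule this out since $Q$ being a center of a nontrivial elation with axis $\ell \ni$ ... this needs the axis to then be $F$-related to $m$). Rather than belabor this, the efficient argument: apply the coloring analysis of Lemma~\ref{lemma:fixpoint} verbatim. That proof shows that whenever $F$ contains a nontrivial elation and is not point-transitive, $F$ fixes a point or a line, and moreover — reading its final case — when the fixed object is a line $m$ with an associated "exceptional" point $Q = \ell \cap \ell'$ off $m$, the coloring collapses to exactly: color $c_1$ on all of $m$, color $c_2$ on everything off $m$ except $Q$, and $Q$ gets $c_1$ or $c_2$ (the case $c_3 \neq c_1, c_2$ is excluded here because then $Q$ would be a second fixed point, contradicting "exactly one point $Q$"... wait, $Q$ \emph{is} the fixed point, so that case is fine — but then $m$ fixed and $Q$ fixed is consistent). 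So: from that analysis, the point-orbits of $F$ are exactly $\{$points of $m\}$, $\{Q\}$, and $\{$points off $m$ other than $Q\}$, possibly with the last two fused. Since by hypothesis $F$ fixes $Q$, the orbit $\{Q\}$ is genuinely a singleton, so the three sets are exactly the orbits and $F$ is transitive on each of the other two. That is the claim.

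The main obstacle I anticipate is not the transitivity conclusion itself but correctly reducing to the configuration in the last bullet of Lemma~\ref{lemma:fixpoint}'s proof — i.e.\ verifying that, given the hypotheses (nontrivial elation present, exactly one fixed point $Q$ and one fixed line $m$, $Q \notin m$), we genuinely land in that case rather than in the "$P$ is fixed" or "$\ell' = m$" or "$\ell'$ parallel issues" subcases. The "$P$ fixed" subcase is excluded since the only fixed point is $Q$ and an elation center with a nontrivial orbit... one must check $P \neq Q$: if $P = Q$ then the axis $\ell$ through $Q$; but $F$ fixes $Q$ and conjugating $\alpha$ moves $\ell$ (else $\ell$ fixed, contradicting uniqueness unless $\ell = m$, impossible as $Q \in \ell$ but $Q \notin m$) — fine, so this is handled. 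The "$\ell' = m$" subcase forces all points one color, contradicting the existence of the fixed point $Q$ as a separate orbit — handled. So the residual case is precisely $\ell' \neq m$ with $Q = \ell \cap \ell'$, and since $Q$ is the (unique) fixed point we are exactly in the situation where the three orbits are as described; reading off transitivity on $m \setminus$ nothing and on the complement finishes it. I would also double-check the degenerate possibility $P = Q$ forces $P \in \ell$, $P \in \ell'$, so $Q \in \ell \cap \ell'$ consistently, and the same orbit description goes through.
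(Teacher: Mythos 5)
Your overall strategy --- colour the points by $F$-orbit, use Lemma~\ref{lemma:cycles} together with the primality of the order to make every line through the centre $P$ other than the axis $\ell$ monochromatic away from $P$, then transport this by an element $\beta$ moving $P$ and read off that there are only three colour classes --- is exactly the paper's proof, which is indeed a rerun of the final case of the proof of Lemma~\ref{lemma:fixpoint}.

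There is, however, a genuine gap at the step you yourself single out as ``the main obstacle'': you never actually prove that the centre $P$ lies on the fixed line $m$ and that the fixed point $Q$ lies on the axis $\ell$. Your attempts at this are circular or trail off (``the line $PQ$ is $F$-invariant-ish'', ``this needs the axis to then be $F$-related to $m$\,\ldots''), and you then declare that the analysis of Lemma~\ref{lemma:fixpoint} applies ``verbatim''. It does not, without these two facts: in that analysis the relevant line is $PP'$ (with $P'=\beta(P)$) and the relevant point is $\ell\cap\ell'$, and to conclude anything about the \emph{given} $m$ and $Q$ you must identify $PP'$ with $m$ and $\ell\cap\ell'$ with $Q$; this requires $P, P'\in m$ and $Q\in\ell\cap\ell'$. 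The missing ingredient is a one-line application of Lemma~\ref{lemma:411easy}: the non-trivial elation $\alpha\in F$ fixes both $Q$ and $m$ (since $F$ does), so $Q$ must be incident to $\ell$ and $m$ must be incident to $P$. This immediately gives $P\neq Q$ (as $P\in m$ but $Q\notin m$) and $\ell\neq m$ (as $Q\in\ell\setminus m$), which is also what cleanly rules out the degenerate subcases of the Lemma~\ref{lemma:fixpoint} case analysis --- ``$P$ fixed'', ``orbit of $P$ contained in $\ell$, hence $\ell$ fixed'' (a case you do not address at all), and ``$\ell'=m$'' --- where your own exclusion arguments are incomplete. With Lemma~\ref{lemma:411easy} inserted at the start, the rest of your plan goes through essentially as the paper writes it.
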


\begin{proof}
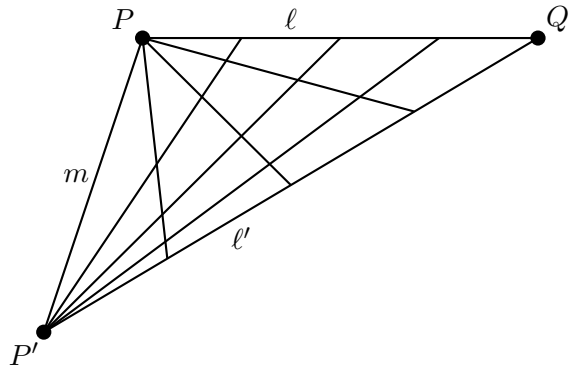
\begin{figure}[b!]
\centering
\begin{pspicture*}(-2,-4.4)(5.7,0.5)
\psset{unit=1.3cm}

\psline(0,0)(4,0)
\psline(0,0)(-1,-3)
\psline(-1,-3)(1,0)
\psline(-1,-3)(2,0)
\psline(-1,-3)(3,0)
\psline(-1,-3)(4,0)
\psline(0,0)(0.25,-2.25)
\psline(0,0)(1.5,-1.5)
\psline(0,0)(2.75,-0.75)

\psdot[linewidth=0.05](0,0)
\psdot[linewidth=0.05](4,0)
\psdot[linewidth=0.05](-1,-3)

\rput(-0.2,0.2){$P$}
\rput(4.2,0.2){$Q$}
\rput(-1.2,-3.22){$P'$}
\rput(-0.67,-1.4){$m$}
\rput(1.5,0.2){$\ell$}
\rput(1,-2.04){$\ell'$}

\end{pspicture*}
\caption{Illustration of Lemma~\ref{lemma:3colors}.}\label{picture:3colors}
\end{figure}
Let $\alpha$ be a non-trivial elation in $F$, say with axis $\ell$ and center $P$. From Lemma~\ref{lemma:411easy}, we deduce that $Q$ is incident to $\ell$ (and different from $P$) and that $m$ is incident to $P$ (and different from $\ell$), see Figure~\ref{picture:3colors}. We color the points of $\Pi$ according to their orbit (under the action of $F$). By Lemma~\ref{lemma:cycles} and since $\Pi$ has prime order, for each line $o$ incident to $P$ and different from $\ell$, all points incident to $o$ and different from $P$ have the same color $(*)$. Now by hypothesis, $P$ is not fixed by $F$. Thus there exists $\beta \in F$ with $\beta(P) = P' \neq P$. Moreover, $P'$ is incident to $m$ since $m$ is fixed by $F$ (and hence by $\beta$). As $Q$ is fixed by $F$, we also have $\beta(\ell) = \ell'$ where $\ell'$ is the line incident to $P'$ and $Q$. So by $(*)$, we get that for each line $o'$ incident to $P'$ and different from $\ell'$, all points incident to $o'$ and different from $P'$ have the same color $(**)$. From $(*)$ and $(**)$ we deduce that there are exactly three colors: one for $Q$, one for the points incident to $m$ and one for all other points.
\end{proof}

\subsection{From vertex-transitivity to panel-transitivity}

We start this subsection with two easy lemmas.

\begin{lemma}\label{lemma:not-transitive}
Suppose that $\Aut(\Delta)^+$ is transitive on vertices of each type but not transitive on panels of each type. Then for each vertex $O$ in $\Delta$, $\PsiPsi 1(O)$ is not transitive on $\PP 1(O)$ (resp.\ $\LL 1(O)$).
\end{lemma}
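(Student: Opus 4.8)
The plan is to argue by contrapositive, working at the level of the residue $\HH 1(O)$ of a single vertex $O$. Suppose $\PsiPsi 1(O)$ is transitive on $\PP 1(O)$ for some vertex $O$. The first step is to leverage the hypothesis that $\Aut(\Delta)^+$ is transitive on vertices of each type: since transitivity of a collineation group on points is equivalent to transitivity on lines (Lemma~\ref{lemma:transitive}), $\PsiPsi 1(O)$ is then transitive both on $\PP 1(O)$ and on $\LL 1(O)$, and by conjugating with automorphisms of $\Aut(\Delta)^+$ moving $O$ to any other vertex of the same type, the same holds at every vertex of the type of $O$. Actually, vertex-transitivity of $\Aut(\Delta)$ on \emph{all} vertices (not only type-preservingly) lets us transport the property to vertices of all three types.

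The second step is to promote this vertex-local point/line transitivity to transitivity of $\Aut(\Delta)^+$ on panels. Fix a panel $\{v,w\}$ of type $\{a,b\}$ and another panel $\{v',w'\}$ of the same type. Using vertex-transitivity, move $v'$ to $v$ by some $g\in\Aut(\Delta)^+$ (after adjusting by an element of $\Aut(\Delta)$ permuting types if needed, but since we only compare panels of equal type we may stay inside $\Aut(\Delta)^+$ for the vertex whose type we match; the point is that $v$ and $v'$ have the same type so some type-preserving automorphism carries one to the other). Now $w$ and $g(w')$ are both vertices adjacent to $v$ of the same type, i.e.\ both points (or both lines) of the projective plane $\HH 1(v)$; by the transitivity of $\PsiPsi 1(v)$ on $\PP 1(v)$ (resp.\ $\LL 1(v)$), established in the first step, there is an element of $\Aut(\Delta)^+$ fixing $v$ and sending $g(w')$ to $w$. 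Composing, we send the panel $\{v',w'\}$ onto $\{v,w\}$, so $\Aut(\Delta)^+$ is panel-transitive on panels of type $\{a,b\}$, and repeating for each type gives panel-transitivity of each type. This contradicts the hypothesis, proving the lemma.

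I expect the main subtlety to be bookkeeping with the types: one must make sure that the automorphism moving $v'$ to $v$ can be chosen type-preserving (it can, because $v$ and $v'$ share a type and $\Aut(\Delta)^+$ is transitive on vertices of each type), and that afterwards $w$ and the image of $w'$ really are of a single common type so that they are interchangeable as points of, or as lines of, $\HH 1(v)$ (they are, since the original panels had the same type $\{a,b\}$ and $v$ corresponds to one of $a,b$). No deep input beyond Lemma~\ref{lemma:transitive} is needed; the argument is essentially a routine two-step transitivity composition. The contrapositive formulation is the cleanest way to package it, since the statement we must prove is a negative ("not transitive") conclusion from a negative hypothesis.
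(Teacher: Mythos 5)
Your overall strategy (contrapositive, Lemma~\ref{lemma:transitive}, then composing a vertex-moving automorphism with a residue-level one) is sound, and your second step matches the paper's, but there is a genuine gap in your first step: to transport the local transitivity from $O$ to vertices of the \emph{other two} types you invoke ``vertex-transitivity of $\Aut(\Delta)$ on all vertices (not only type-preservingly)''. That is not a hypothesis of the lemma; only transitivity of $\Aut(\Delta)^+$ on vertices of \emph{each} type is assumed, and this never produces a type-rotating automorphism. Consequently, if $O$ has type $0$, your argument establishes transitivity of $\PsiPsi 1(v)$ only at type-$0$ vertices $v$, which yields panel-transitivity for the panel types $\{0,1\}$ and $\{0,2\}$ but says nothing yet about panels of type $\{1,2\}$ --- and without all three types you cannot contradict the hypothesis.

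The missing ingredient is a \emph{second} application of Lemma~\ref{lemma:transitive}, this time at a vertex $O'$ of type $1$ (say): once $\Aut(\Delta)^+$ is known to be transitive on panels of type $\{0,1\}$, the stabilizer of $O'$ in $\Aut(\Delta)^+$ is transitive on the $\{0,1\}$-panels through $O'$, i.e.\ on one of the two classes of $\HH 1(O')$; Lemma~\ref{lemma:transitive} then gives transitivity on the other class, i.e.\ on the $\{1,2\}$-panels through $O'$, and combining this with transitivity on type-$1$ vertices yields transitivity on all panels of type $\{1,2\}$, completing the contradiction. (Your appeal to full vertex-transitivity of $\Aut(\Delta)$ would be legitimate in the setting of Proposition~\ref{proposition:bigone}, where that is assumed, but the lemma is stated and must be proved without it.)
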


\begin{proof}
Suppose for a contradiction that $\PsiPsi 1(O)$ is transitive on $\PP 1(O)$ for some vertex $O$ in $\Delta$, say of type $0$. By Lemma~\ref{lemma:transitive}, $\PsiPsi 1(O)$ is also transitive on $\LL 1(O)$. Since $\Aut(\Delta)^+$ is transitive on vertices of each type, this implies that $\Aut(\Delta)^+$ is transitive on panels of type $\{0,1\}$ and of type $\{0,2\}$ of $\Delta$. Now if we consider a vertex $O'$ of type $1$, then we know that the stabilizer of $O'$ in $\Aut(\Delta)^+$ is transitive on panels of type $\{0,1\}$ adjacent to $O'$. By Lemma~\ref{lemma:transitive}, it is also transitive on panels of type $\{1,2\}$ adjacent to $O'$. It follows that $\Aut(\Delta)^+$ is also transitive on panels of type $\{1,2\}$, which contradicts the hypothesis.
\end{proof}

\begin{lemma}\label{lemma:unimodular}
Suppose that $\Aut(\Delta)$ is transitive on vertices and unimodular. If $v$ and $w$ are two vertices in $\Delta$ such that the stabilizer $\Aut(\Delta)^+(v)$ of $v$ in $\Aut(\Delta)^+$ fixes $w$, then $\Aut(\Delta)^+(v) = \Aut(\Delta)^+(w)$.
\end{lemma}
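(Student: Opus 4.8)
The plan is to deduce this from unimodularity and vertex-transitivity via a Haar-measure computation: I will show that the compact open subgroups $\Aut(\Delta)^+(v)$ and $\Aut(\Delta)^+(w)$ have equal Haar measure, so that, one being contained in the other by hypothesis, they must coincide. Write $\tilde G = \Aut(\Delta)$ and $G = \Aut(\Delta)^+$. Since $\Delta$ is locally finite, $\tilde G$ with the topology of pointwise convergence is a locally compact group whose vertex stabilizers are compact open; as $G$ has finite index in $\tilde G$ it is open, so $G(v)$ and $G(w)$ are compact open subgroups of $\tilde G$ as well. I would also record that $G$ is normal in $\tilde G$, being the kernel of the homomorphism $\tilde G \to \Sym(\{0,1,2\})$ that records the permutation of types induced by an automorphism.

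The key point I would establish is that $G(w)$ is a $\tilde G$-conjugate of $G(v)$. By vertex-transitivity of $\tilde G$, pick $g \in \tilde G$ with $g\cdot v = w$; since $G$ is normal in $\tilde G$, conjugation by $g$ maps $G$ onto itself, and a one-line check shows $g\,G(v)\,g^{-1} = G(w)$ (if $h$ fixes $v$ then $ghg^{-1} \in G$ fixes $gv = w$, and symmetrically for the reverse inclusion). Fixing a Haar measure $\mu$ on $\tilde G$, left-invariance gives $\mu(g\,G(v)\,g^{-1}) = \mu(G(v)\,g^{-1})$, and unimodularity of $\tilde G$ gives $\mu(G(v)\,g^{-1}) = \mu(G(v))$; hence $\mu(G(w)) = \mu(G(v))$.

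To finish, I would invoke the hypothesis $G(v) \le G(w)$: both subgroups being compact open, $[G(w):G(v)]$ is finite and $\mu(G(w)) = [G(w):G(v)]\cdot\mu(G(v))$, so from $0 < \mu(G(v)) = \mu(G(w)) < \infty$ we get $[G(w):G(v)] = 1$, that is $\Aut(\Delta)^+(v) = \Aut(\Delta)^+(w)$. I do not expect any genuine obstacle here; the only thing one must notice is that the normality of $\Aut(\Delta)^+$ in $\Aut(\Delta)$, combined with vertex-transitivity, turns $\Aut(\Delta)^+(w)$ into a conjugate of $\Aut(\Delta)^+(v)$ \emph{inside the unimodular group} $\Aut(\Delta)$, after which conjugation-invariance of Haar measure does everything.
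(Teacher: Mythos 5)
Your proof is correct and follows essentially the same route as the paper: pick $g \in \Aut(\Delta)$ with $g(v) = w$, use unimodularity to see that $\mu(\Aut(\Delta)^+(v)) = \mu(g\,\Aut(\Delta)^+(v)\,g^{-1}) = \mu(\Aut(\Delta)^+(w))$, and combine this with the inclusion $\Aut(\Delta)^+(v) \subseteq \Aut(\Delta)^+(w)$ to conclude equality. Your version simply spells out the details (normality of $\Aut(\Delta)^+$, compact openness of the stabilizers, the finite-index count) that the paper leaves implicit.
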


\begin{proof}
We have $\Aut(\Delta)^+(v) \subseteq \Aut(\Delta)^+(w)$ by hypothesis. Take $g \in \Aut(\Delta)$ such that $g(v) = w$. Since $\Aut(\Delta)$ is unimodular, the Haar measure $\mu$ of $\Aut(\Delta)$ satisfies $\mu(\Aut(\Delta)^+(v)) = \mu(g \Aut(\Delta)^+(v) g^{-1}) = \mu(\Aut(\Delta)^+(w))$. This implies that $\Aut(\Delta)^+(v) = \Aut(\Delta)^+(w)$.
\end{proof}

The main result of this section is then the following.

\setcounter{claim}{0}

\begin{proposition}\label{proposition:bigone}
Suppose that $\Aut(\Delta)$ is transitive on vertices, non-$6$-discrete and unimodular, that $\Aut(\Delta)^+$ is transitive on vertices of each type, and that $\Delta$ has thickness $p+1$ for some prime $p$. Then $\Aut(\Delta)^+$ is transitive on panels of each type.
\end{proposition}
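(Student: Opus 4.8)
Fix a vertex $O$ and suppose for a contradiction that $\Aut(\Delta)^+$ is \emph{not} transitive on panels of each type. By Lemma~\ref{lemma:not-transitive}, $\PsiPsi 1(O)$ is then not transitive on $\PP 1(O)$ (nor on $\LL 1(O)$). On the other hand, $\Aut(\Delta)$ is non-$6$-discrete, hence in particular non-$4$-discrete, so by Proposition~\ref{proposition:h1col} (applied with a vertex whose type we may adjust using the transitivity of $\Aut(\Delta)$ on vertices) the group $\PsiPsi 1(O)$ contains a non-trivial elation. Since $\Delta$ has thickness $p+1$ with $p$ prime, the projective plane $\HH 1(O)$ has prime order, so Lemma~\ref{lemma:fixpoint} applies: either $\PsiPsi 1(O)$ is transitive on points of $\HH 1(O)$ — excluded — or $\PsiPsi 1(O)$ fixes a point or a line of $\HH 1(O)$. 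So for every vertex $O$, the stabilizer $\Aut(\Delta)^+(O)$ fixes some point or line of the residue of $O$, i.e.\ fixes a neighbouring vertex of $\Delta$.

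**Using unimodularity.** The key leverage is Lemma~\ref{lemma:unimodular}: if $\Aut(\Delta)^+(O)$ fixes an adjacent vertex $w$, then in fact $\Aut(\Delta)^+(O)=\Aut(\Delta)^+(w)$, so $\Aut(\Delta)^+(w)$ fixes $O$ as well. Now we want to understand the fixed structure. Let $F_O$ be the (non-empty) set of vertices adjacent to $O$ that are fixed by $\Aut(\Delta)^+(O)$. By Lemma~\ref{lemma:unimodular} every $w\in F_O$ has $\Aut(\Delta)^+(w)=\Aut(\Delta)^+(O)$, so $F_w=F_O$ as well (both equal the set of vertices fixed by this common group that are adjacent to $O$ or to $w$); more precisely $\Aut(\Delta)^+(O)$ fixes pointwise the convex hull of $\{O\}\cup F_O$. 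I would first show $F_O$ cannot consist of a single point-or-line of the residue together with $O$: if $\PsiPsi 1(O)$ fixed exactly one point $Q$ of $\HH 1(O)$ and no line, then $\PsiPsi 1(O)$ would be transitive on the $p+1$ lines through $Q$ and on the $p^2$ points off these — but Lemma~\ref{lemma:unimodular} forces $Q$ (as a vertex of $\Delta$) to have stabilizer equal to $\Aut(\Delta)^+(O)$, and the residue of $Q$ then also has a non-transitive type-action, giving the dual conclusion there; propagating this back and forth one gets an infinite fixed subtree-like configuration, eventually forcing $\Aut(\Delta)^+(O)$ to act trivially on some $\HH n(O)$, contradicting non-$6$-discreteness.

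**The delicate case.** The genuinely delicate case is when $\PsiPsi 1(O)$ fixes exactly one point $Q$ \emph{and} one line $m$ of $\HH 1(O)$ with $Q$ not on $m$ — this is precisely the hypothesis of Lemma~\ref{lemma:3colors}, which then tells us $\PsiPsi 1(O)$ is transitive on the $p+1$ points of $m$ and transitive on the $p^2$ points off $m$ other than $Q$. Translating to $\Delta$: $\Aut(\Delta)^+(O)$ fixes the two adjacent vertices corresponding to $Q$ and $m$, and by Lemma~\ref{lemma:unimodular} these two vertices have stabilizer equal to $\Aut(\Delta)^+(O)$. But $Q$ and $m$ lie in a common chamber $c$ with $O$; the vertex of $\Delta$ ``opposite $O$ in $c$'' (the vertex dual to the edge $Qm$) is then also fixed, and one checks its stabilizer again equals $\Aut(\Delta)^+(O)$, so the \emph{whole chamber} $c$ is fixed pointwise with common stabilizer. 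Then the three residues of the vertices of $c$ each exhibit the same non-transitivity, and running the Lemma~\ref{lemma:3colors} analysis in each — together with the transitivity statements it provides — lets one compare orbit sizes around $c$. I expect the contradiction to come from a counting/parity incompatibility: the transitivity of $\PsiPsi 1(O)$ on the $p^2$ off-$m$ points (other than $Q$) is incompatible, via Lemma~\ref{lemma:cycles} applied to the elation together with the fixed-point structure, with the simultaneous fixing of a full chamber with a single common stabilizer — one orbit would have to have size divisible by $p$ while another forces size $1$.

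**Main obstacle.** The hard part will be the bookkeeping in the ``one fixed point, one fixed line'' case: showing that the fixed flag forces a whole fixed chamber with a single common stabilizer, and then extracting a numerical contradiction from the combination of Lemma~\ref{lemma:3colors}'s transitivity conclusions applied at all three vertices of that chamber, the action of the elation from Lemma~\ref{lemma:cycles}, and the rigidity from Lemma~\ref{lemma:unimodular}. Once a contradiction is reached in every case, $\Aut(\Delta)^+$ must be transitive on panels of each type, and Theorem~\ref{maintheorem:A'} finishes Theorem~\ref{maintheorem:B'}.
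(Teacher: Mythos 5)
Your opening is exactly the paper's: assume non-transitivity on panels, invoke Lemma~\ref{lemma:not-transitive}, produce a non-trivial elation in $\PsiPsi 1(O)$ via Proposition~\ref{proposition:h1col}, apply Lemma~\ref{lemma:fixpoint} (using that $p$ is prime), and upgrade the fixed adjacent vertex to an equality of stabilizers via Lemma~\ref{lemma:unimodular}. This is precisely how the paper introduces its ``red'' edges $[v,w]$ with $\Aut(\Delta)^+(v)=\Aut(\Delta)^+(w)$. But from that point on there is a genuine gap: everything that actually produces the contradiction is either asserted without proof or rests on a mechanism that cannot work.

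Concretely, your first proposed contradiction --- that propagating the fixed configuration yields ``an infinite fixed subtree-like configuration, eventually forcing $\Aut(\Delta)^+(O)$ to act trivially on some $\HH n(O)$, contradicting non-$6$-discreteness'' --- fails twice over. First, non-$n$-discreteness \emph{asserts the existence} of non-trivial elements acting trivially on $\HH n(O)$; a large pointwise-fixed set is in no tension with it. Second, and more tellingly, the paper's own proof \emph{establishes} exactly such a configuration (Claims~\ref{claim:geodesic}: through every vertex there is a bi-infinite geodesic of red edges, pointwise fixed by the common stabilizer) and this is not a contradiction --- it is an intermediate structural fact that still requires several further steps (the ``trapeze'' Claim~\ref{claim:trapeze} and a final argument using an $\hh 1$-collineation of $\HH 2(v)$) before anything breaks. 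Your second proposed contradiction, for the fixed-flag case, is explicitly left as an expectation (``I expect the contradiction to come from a counting/parity incompatibility''); the bookkeeping you defer is in fact the bulk of the proof. The paper's route there is: show each vertex meets exactly two red edges and that these are \emph{opposite} (not incident) in the residue (Claims~\ref{claim:oppose} and~\ref{claim:only2}, which need $\hh 1$-collineations of $\HH 3(v)$ and a separate treatment of $p=2$), then use Lemma~\ref{lemma:3colors} and Lemma~\ref{lemma:411easy} to propagate redness along a ``trapeze'' and finally exhibit three mutually non-opposite red edges at one vertex, contradicting Claim~\ref{claim:only2}. None of that, nor any workable substitute, appears in your sketch, so the proposal does not yet constitute a proof.
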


\begin{proof}
Let us assume for a contradiction that $\Aut(\Delta)^+$ is not transitive on panels of each type. By Lemma~\ref{lemma:not-transitive}, this implies that $\PsiPsi 1(v)$ is not transitive on $\PP 1(v)$ (and on $\LL 1(v)$) for each vertex $v$ in $\Delta$. In view of Lemmas~\ref{lemma:fixpoint} and~\ref{lemma:unimodular}, for each such $v$ there exists $w$ adjacent to $v$ in $\Delta$ such that $\Aut(\Delta)^+(v) = \Aut(\Delta)^+(w)$. From now on, we color in red all panels (i.e.\ edges) $[v,w]$ in $\Delta$ such that $\Aut(\Delta)^+(v) = \Aut(\Delta)^+(w)$. We have just seen that each vertex is adjacent to at least one red edge.

\begin{claim}\label{claim:trivial}
Let $v, w, x, y$ be vertices in $\Delta$, placed as shown below.
\begin{enumerate}[(i)]
\item If $[v,w]$ and $[v,x]$ are red, then $[w,x]$ is red.
\item If $[v,w]$ and $[v,y]$ are red, then $[v,x]$ is red.
\end{enumerate}
\centering
\begin{pspicture*}(-0.9,-0.3)(1.5,1.4)
\psset{unit=1.2cm}

\psline(0,0)(1,0)
\psline(0,0)(0.5,0.866)
\psline(0,0)(-0.5,0.866)
\psline(1,0)(0.5,0.866)
\psline(0.5,0.866)(-0.5,0.866)

\psdot[linewidth=0.03](0,0)
\rput(-0.15,-0.15){$v$}
\psdot[linewidth=0.03](1,0)
\rput(1.15,-0.15){$w$}
\psdot[linewidth=0.03](0.5,0.866)
\rput(0.65,1.016){$x$}
\psdot[linewidth=0.03](-0.5,0.866)
\rput(-0.65,1.016){$y$}

\end{pspicture*}
\end{claim}

\begin{claimproof*}
The claim follows from the definition of a red edge:
\begin{enumerate}[(i)]
\item Having $[v,w]$ and $[v,x]$ red means that $\Aut(\Delta)^+(v) = \Aut(\Delta)^+(w)$ and that $\Aut(\Delta)^+(v) = \Aut(\Delta)^+(x)$, so $\Aut(\Delta)^+(w) = \Aut(\Delta)^+(x)$ and $[w,x]$ is red.
\item Having $[v,w]$ and $[v,y]$ red means that $\Aut(\Delta)^+(v) = \Aut(\Delta)^+(w)$ and that $\Aut(\Delta)^+(v) = \Aut(\Delta)^+(y)$. In particular, this implies that $\Aut(\Delta)^+(v)$ fixes $x$. By Lemma~\ref{lemma:unimodular}, this gives us $\Aut(\Delta)^+(v) = \Aut(\Delta)^+(x)$ so that $[v,x]$ is red. \hfill $\blacksquare$\vspace{0.2cm}
\end{enumerate}
\end{claimproof*}

\begin{claim}\label{claim:root}
Let $v$ be a vertex in $\Delta$ and let $\alpha$ be a non-trivial elation of $\HH 1(v)$, with axis $\ell$ and center $P$. Then all vertices $w$ adjacent to $v$ with $[v,w]$ red are incident to $P$ or $\ell$.
\end{claim}

\begin{claimproof}
This follows from Lemma~\ref{lemma:411easy}.
\end{claimproof}

\begin{claim}\label{claim:oppose}
For each vertex $v$ in $\Delta$, there exist two vertices $w, x$ adjacent to $v$ and opposite in $\HH 1(v)$ such that $[v,w]$ and $[v,x]$ are red.
\end{claim}

\begin{claimproof}
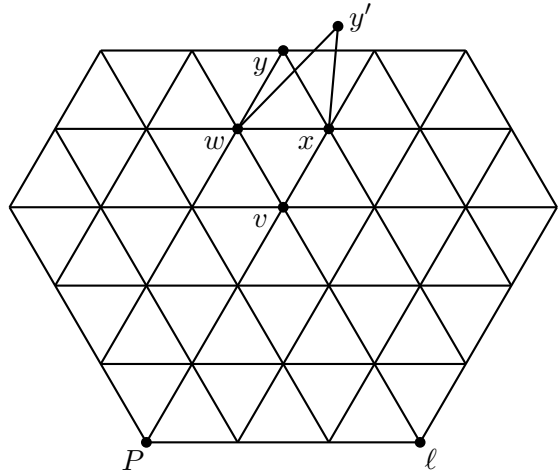
\begin{figure}[b!]
\centering
\begin{pspicture*}(-3.7,-3.5)(3.7,2.7)
\psset{unit=1.2cm}

\psline(-3,0)(3,0)
\psline(-2.5,-0.866)(2.5,-0.866)
\psline(-2,-1.732)(2,-1.732)
\psline(-1.5,-2.598)(1.5,-2.598)
\psline(-2.5,0.866)(2.5,0.866)
\psline(-2,1.732)(2,1.732)

\psline(-3,0)(-1.5,-2.598)
\psline(-2.5,0.866)(-0.5,-2.598)
\psline(-2,1.732)(0.5,-2.598)
\psline(-1,1.732)(1.5,-2.598)
\psline(0,1.732)(2,-1.732)
\psline(1,1.732)(2.5,-0.866)
\psline(2,1.732)(3,0)

\psline(3,0)(1.5,-2.598)
\psline(2.5,0.866)(0.5,-2.598)
\psline(2,1.732)(-0.5,-2.598)
\psline(1,1.732)(-1.5,-2.598)
\psline(-0,1.732)(-2,-1.732)
\psline(-1,1.732)(-2.5,-0.866)
\psline(-2,1.732)(-3,0)

\psline(-0.5,0.866)(0.6,2)
\psline(0.5,0.866)(0.6,2)

\psdot[linewidth=0.03](0,0)
\rput(-0.25,-0.15){$v$}
\psdot[linewidth=0.03](-0.5,0.866)
\rput(-0.75,0.716){$w$}
\psdot[linewidth=0.03](0.5,0.866)
\rput(0.25,0.716){$x$}
\psdot[linewidth=0.03](0,1.732)
\rput(-0.25,1.55){$y$}
\psdot[linewidth=0.03](0.6,2)
\rput(0.85,2.1){$y'$}

\psdot[linewidth=0.03](-1.5,-2.598)
\rput(-1.65,-2.79){$P$}
\psdot[linewidth=0.03](1.5,-2.598)
\rput(1.62,-2.78){$\ell$}
\end{pspicture*}
\caption{Illustration of Proposition~\ref{proposition:bigone}, Claim~\ref{claim:oppose}.}\label{picture:claimoppose}
\end{figure}
By Lemmas~\ref{lemma:fixpoint} and~\ref{lemma:unimodular}, there is at least one red edge adjacent to any vertex. Since $\Aut(\Delta)$ is transitive on vertices, each vertex is adjacent to the same number of red edges. This number cannot be exactly one, because then there would be an issue with the types of the red panels (because $\Aut(\Delta)^+$ is transitive on vertices of each type). So each vertex is adjacent to at least two red edges.

We want to show that, for each vertex $v$, there exists $w, x$ adjacent to $v$ and opposite in $\HH 1(v)$ such that $[v,w]$ and $[v,x]$ are red. If this situation occurs at one vertex $v$, then it occurs at any vertex $v$ in view of the vertex-transitivity. So we assume for a contradiction that this situation does not appear anywhere.

First assume that, for some vertex $v$, there exist two vertices $w,y$ adjacent to $v$, with the same type and such that $[v,w]$ and $[v,y]$ are red. Then the edge $[v,x]$ between $w$ and $y$ must also be red, as well as $[w,x]$ and $[x,y]$ (by Claim~\ref{claim:trivial}). But there must also be two red edges of the same type adjacent to $w$. In all cases, we find (via Claim~\ref{claim:trivial}) two opposite red edges adjacent to a same vertex. So two such red edges $[v,w]$ and $[v,x]$ cannot exist, and the only remaining possibility is to have, for each vertex $v$ in $\Delta$, exactly two red edges adjacent to $v$, of different types and incident in $\HH 1(v)$ $(*)$.

We now show that this situation is impossible. Let us consider some non-trivial $\hh 1$-collineation $\alpha$ in $\HH 3(v)$, which exists by Proposition~\ref{proposition:h1col}. Denote by $P$ and $\ell$ its center and axis. Let $w, x$ be two vertices adjacent to $v$ in $\Delta$, placed as in Figure~\ref{picture:claimoppose}. Now for each vertex $y$ adjacent to both $w$ and $x$ but different from $v$, $\alpha$ induces an elation of $\HH 1(y)$ with axis $x$ and center $w$. Observing $(*)$ and Claim~\ref{claim:root} at $y$, we deduce that at least one of the edges $[y,w]$ and $[y,x]$ is red. This observation is true for any choice of $y$. If $p \geq 3$, there are at least three such vertices $y$ and we get two red edges $[w,y]$ and $[w,y']$ (or $[x,y]$ and $[x,y']$) with $y$ and $y'$ of the same type, which contradicts $(*)$. In the particular case where $p = 2$, we can also get a contradiction. First, if we denote by $y$ and $y'$ the two vertices adjacent to $w$ and $x$ and different from $v$, then the only way to not have a contradiction is to have $[w,y]$ and $[x,y']$ red (or $[w,y']$ and $[x,y]$ red). Now consider $x'$ a vertex adjacent to $v$ and $w$, different from $x$ and not adjacent to $\pipi 1(P)$. Then with the same argument as above we get two vertices $t$ and $t'$ adjacent to $w$ and $x'$ and such that $[w,t]$ and $[x,t']$ are red. This gives a contradiction with $(*)$ at $w$: the two edges $[w,y]$ and $[w,t]$ are red but $y$ and $t$ have the same type.
\end{claimproof}

\begin{claim}\label{claim:only2}
For each vertex $v$ in $\Delta$, there are exactly two red edges adjacent to $v$, and they are opposite in $\HH 1(v)$.
\end{claim}

\begin{claimproof}
\begin{figure}[b!]
\centering
\begin{pspicture*}(-3.7,-3.5)(3.7,2.9)
\psset{unit=1.2cm}

\psline(-3,0)(3,0)
\psline(-2.5,-0.866)(2.5,-0.866)
\psline(-2,-1.732)(2,-1.732)
\psline(-1.5,-2.598)(1.5,-2.598)
\psline(-2.5,0.866)(2.5,0.866)
\psline(-2,1.732)(2,1.732)

\psline(-3,0)(-1.5,-2.598)
\psline(-2.5,0.866)(-0.5,-2.598)
\psline(-2,1.732)(0.5,-2.598)
\psline(-1,1.732)(1.5,-2.598)
\psline(0,1.732)(2,-1.732)
\psline(1,1.732)(2.5,-0.866)
\psline(2,1.732)(3,0)

\psline(3,0)(1.5,-2.598)
\psline(2.5,0.866)(0.5,-2.598)
\psline(2,1.732)(-0.5,-2.598)
\psline(1,1.732)(-1.5,-2.598)
\psline(-0,1.732)(-2,-1.732)
\psline(-1,1.732)(-2.5,-0.866)
\psline(-2,1.732)(-3,0)

\psline(-0.5,0.866)(0.6,2)
\psline(0.5,0.866)(0.6,2)
\psline(-0.5,0.866)(-0.4,2)
\psline(0.5,0.866)(1.6,2)
\psline(-0.4,2)(1.6,2)

\psdot[linewidth=0.03](0,0)
\rput(-0.25,-0.15){$v$}
\psdot[linewidth=0.03](-0.5,0.866)
\rput(-0.75,0.716){$w$}
\psdot[linewidth=0.03](0.5,0.866)
\rput(0.25,0.716){$x$}

\psdot[linewidth=0.03](0,1.732)
\rput(-0.25,1.55){$y$}
\psdot[linewidth=0.03](-1,1.732)
\rput(-1.23,1.58){$s$}
\psdot[linewidth=0.03](1,1.732)
\rput(0.75,1.58){$t$}

\psdot[linewidth=0.03](0.6,2)
\rput(0.8,2.25){$y'$}
\psdot[linewidth=0.03](-0.4,2)
\rput(-0.21,2.24){$s'$}
\psdot[linewidth=0.03](1.6,2)
\rput(1.79,2.24){$t'$}

\psdot[linewidth=0.03](-1.5,-2.598)
\rput(-1.65,-2.79){$P$}
\psdot[linewidth=0.03](1.5,-2.598)
\rput(1.62,-2.78){$\ell$}
\end{pspicture*}
\caption{Illustration of Proposition~\ref{proposition:bigone}, Claim~\ref{claim:only2}.}\label{picture:only2}
\end{figure}
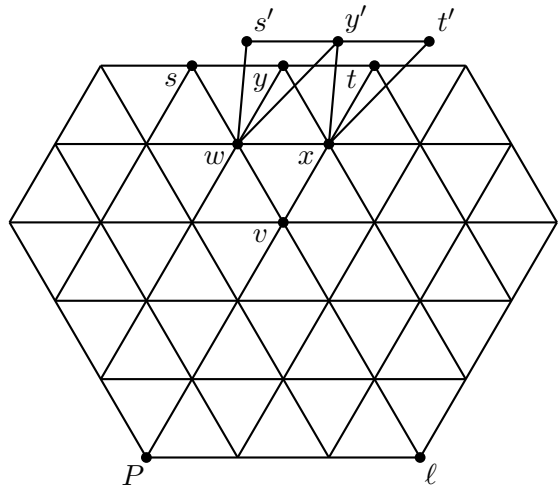
For each vertex $v$ in $\Delta$, we have two red edges adjacent to $v$ and opposite in $\HH 1(v)$, by Claim~\ref{claim:oppose}. Now assume for a contradiction that some (and hence any) vertex is adjacent to a third red edge.

For some vertex $v$, we consider some non-trivial $\hh 1$-collineation $\alpha$ in $\HH 3(v)$, with axis $\ell$ and center $P$. Let $w, x$ be two vertices adjacent to $v$ in $\Delta$, placed as in Figure~\ref{picture:only2}. Given a vertex $y$ adjacent to both $w$ and $x$ but different from $v$, $\alpha$ induces an elation of $\HH 1(y)$ with axis $x$ and center $w$. Applying Claims~\ref{claim:root} and~\ref{claim:oppose} at $y$, we obtain two red edges $[y,s]$ and $[y,t]$, with $s$ adjacent to $w$ and $t$ adjacent to $x$, see Figure~\ref{picture:only2}. We assumed that there is a third red edge $[y,r]$ adjacent to $y$. By Claim~\ref{claim:root}, $r$ must be adjacent to $w$ or $x$. Via Claim~\ref{claim:trivial}, this implies that all edges $[y,w]$, $[y,x]$, $[s,w]$, $[w,x]$ and $[x,t]$ are red. Now we can do the same reasoning with another vertex $y$ adjacent to $w$ and $x$ but different from $v$. This gives us two vertices $s'$ and $t'$ with $[y',s']$, $[y',t']$, $[y',w]$, $[y',x]$, $[s',w]$ and $[x,t']$ red. In particular, we get that the three edges $[w,s]$, $[w,x]$ and $[w,s']$ are red, with $s$, $x$ and $s'$ having the same type. In view of Claim~\ref{claim:root}, since there exists a non-trivial elation of $\HH 1(w)$, these three edges should be incident to a common edge. This is not the case, so we have our contradiction.
\end{claimproof}

\begin{claim}\label{claim:geodesic}
For each vertex $v$ in $\Delta$, there is a red bi-infinite geodesic through $v$.
\end{claim}

\begin{claimproof}
This follows directly from Claim~\ref{claim:only2}.
\end{claimproof}

\begin{claim}\label{claim:trapeze}
Let $v, w, x, y, z$ be vertices in $\Delta$ placed as shown below. If $[v,w]$ and $[v,x]$ are red, then $[y,z]$ is red.

\centering
\begin{pspicture*}(-1.5,-0.3)(1.5,1.4)
\psset{unit=1.2cm}

\psline(0,0)(-1,0)
\psline(0,0)(1,0)
\psline(0,0)(0.5,0.866)
\psline(0,0)(-0.5,0.866)
\psline(1,0)(0.5,0.866)
\psline(-1,0)(-0.5,0.866)
\psline(0.5,0.866)(-0.5,0.866)

\psdot[linewidth=0.03](0,0)
\rput(0,-0.18){$v$}
\psdot[linewidth=0.03](1,0)
\rput(1.15,-0.15){$x$}
\psdot[linewidth=0.03](0.5,0.866)
\rput(0.65,1.016){$z$}
\psdot[linewidth=0.03](-0.5,0.866)
\rput(-0.65,1.016){$y$}
\psdot[linewidth=0.03](-1,0)
\rput(-1.15,-0.15){$w$}

\end{pspicture*}
\end{claim}

\begin{claimproof}
Consider some non-trivial $\hh 1$-collineation $\alpha$ of $\HH 2(v)$ given by Proposition~\ref{proposition:h1col} and denote by $P$ and $\ell$ its center and axis. Assume without loss of generality that the vertex $\pipi 1(P)$ (resp.\ $\pipi 1(\ell)$) has the same type as $x$ (resp.\ $w$). Recall from Claims~\ref{claim:only2} and~\ref{claim:geodesic} that there is a red bi-infinite geodesic through $w$, $v$ and $x$. We deduce that $w$ cannot be opposite to $\pipi 1(P)$ in $\HH 1(v)$, because then $\alpha$ would fix a line not near $P$, contradicting Lemma~\ref{lemma:411easy}. So $w$ must be adjacent to $\pipi 1(P)$. In the same way, we deduce that $x$ must be adjacent to $\pipi 1(\ell)$. Moreover, since $\Aut(\Delta)^+(v)$ fixes $w$ and $x$ and is transitive on points adjacent to $v$ and $w$ (by Lemma~\ref{lemma:3colors}), we can assume without loss of generality that $y$ and $z$ are different from $\pipi 1(P)$ and $\pipi 1(\ell)$, as in Figure~\ref{picture:trapeze}.

\begin{figure}[h!]
\centering
\begin{pspicture*}(-2.5,-2.5)(2.5,2.2)
\psset{unit=1.2cm}

\psline(-2,0)(2,0)
\psline(-1.5,-0.866)(1.5,-0.866)
\psline(-1,-1.732)(1,-1.732)
\psline(-1.5,0.866)(1.5,0.866)

\psline(-2,0)(-1,-1.732)
\psline(-1.5,0.866)(0,-1.732)
\psline(-0.5,0.866)(1,-1.732)
\psline(0.5,0.866)(1.5,-0.866)
\psline(1.5,0.866)(2,0)

\psline(2,0)(1,-1.732)
\psline(1.5,0.866)(0,-1.732)
\psline(0.5,0.866)(-1,-1.732)
\psline(-0.5,0.866)(-1.5,-0.866)
\psline(-1.5,0.866)(-2,0)

\psline(-1.5,0.866)(-1,1.732)
\psline(-1,1.732)(-0.5,0.866)

\psline(-0.5,0.866)(0.2,1.4)
\psline(0,0)(0.2,1.4)

\psdot[linewidth=0.03](0,0)
\rput(-0.25,-0.15){$v$}
\psdot[linewidth=0.03](-1,0)
\rput(-1.25,-0.15){$w$}
\psdot[linewidth=0.03](1,0)
\rput(0.75,-0.15){$x$}
\psdot[linewidth=0.03](-0.5,0.866)
\rput(-0.75,0.7){$y$}
\psdot[linewidth=0.03](0.5,0.866)
\rput(0.25,0.716){$z$}

\psdot[linewidth=0.03](-1,1.732)
\rput(-1.28,1.64){$s_1$}
\psdot[linewidth=0.03](0.2,1.4)
\rput(-0.05,1.5){$s_2$}

\psdot[linewidth=0.03](-1,-1.732)
\rput(-1.15,-1.94){$P$}
\psdot[linewidth=0.03](1,-1.732)
\rput(1.12,-1.94){$\ell$}
\end{pspicture*}
\caption{Illustration of Proposition~\ref{proposition:bigone}, Claim~\ref{claim:trapeze}.}\label{picture:trapeze}
\end{figure}
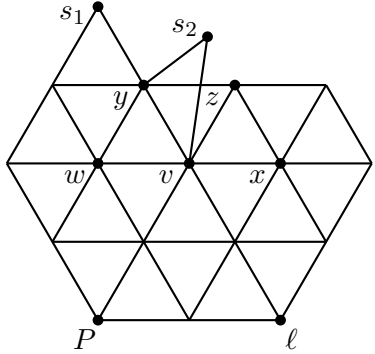

We now prove that $[y,z]$ is red. We already know by the previous claims that there is a (unique) vertex $s$ adjacent to $y$ and with the same type as $z$ such that $[y,s]$ is red. Our goal is to show that $s = z$. First observe that $s$ cannot be opposite to $v$ in $\HH 1(y)$ (as $s_1$ in Figure~\ref{picture:trapeze}). Indeed, if this was the case, then it would mean that $\alpha$ fixes $s$, a point of $\HH 2(v)$ not near $P$. This is impossible by Lemma~\ref{lemma:411easy}. So $s$ is adjacent to $v$.

Of course we cannot have $s = w$ since $[w,v]$ and $[w,y]$ cannot be both red. In order to show that $s = z$, there remains to show that $s$ is adjacent to $x$. We proceed by contradiction, assuming that $s$ is not adjacent to $x$ (as $s_2$ in Figure~\ref{picture:trapeze}). We thus have a red edge $[y,s]$ with $y$ and $s$ adjacent to $v$, $y$ adjacent to $w$ but $s$ not adjacent to $x$. In the case where $p \geq 3$, the contradiction will come from Lemma~\ref{lemma:3colors}. Indeed, if we denote by $Y$ the set of vertices adjacent to $v$ and $w$, and by $S$ the set of vertices with the same type as $s$, adjacent to $v$ and not adjacent to $x$, then Lemma~\ref{lemma:3colors} tells us that $\Aut(\Delta)^+(v)$ is transitive on $Y$ and on $S$. But $|Y|\ = p+1$ and $|S|\ = p^2-1$, so if $p \geq 3$ then having a red edge $[y,s]$ from a vertex in $Y$ to a vertex in $S$ implies that each vertex in $Y$ has more than one red edge going to a vertex in $S$. This is impossible, as $s$ is the only vertex of that type with $[y,s]$ red.

Let us now consider the last case $p = 2$. We continue our proof by contradiction, assuming that $s \neq z$. This time we have $|Y|\ = 3 = |S|$, and each vertex in $Y$ is adjacent to a unique vertex in $S$. This gives us three red edges. If we do the same reasoning around $z$ instead of $y$, then we denote by $Z$ the set of vertices adjacent to $v$ and $x$, by $S'$ the set of vertices with the same type as $y$, adjacent to $v$ and not adjacent to $w$, and we get three other red edges, each one connecting a vertex of $Z$ and a vertex of $S'$. In total, we got six red edges connecting neighbors of $v$. Now since $\Aut(\Delta)$ is transitive on vertices, this whole situation around $v$ also occurs around $w$. If we denote by $a$ the vertex adjacent to $w$ such that $[w,a]$ is red (with $a \neq v$), this means that $[y,b]$ is red, where $b$ is the unique vertex adjacent to $w$ and $y$, different from $v$ and not adjacent to $a$ (see Figure~\ref{picture:trapeze2}). But then, around $y$, we have $[y,b]$ and $[y,s]$ red, while $[w,v]$ is also red. This situation is different from the one around $v$, so we get our contradiction.
\begin{figure}[t!]
\centering
\begin{pspicture*}(-2.8,-0.3)(1.3,2)
\psset{unit=1.2cm}

\psline(-2,0)(1,0)
\psline(-1.5,0.866)(0.5,0.866)
\psline(-2,0)(-1.5,0.866)
\psline(-1.5,0.866)(-1,0)
\psline(-1,0)(-0.5,0.866)
\psline(-0.5,0.866)(0,0)
\psline(0,0)(0.5,0.866)
\psline(0.5,0.866)(1,0)

\psline(-0.5,0.866)(0.2,1.4)
\psline(0,0)(0.2,1.4)

\psline(-1,0)(-1.2,1.4)
\psline(-0.5,0.866)(-1.2,1.4)

\psdot[linewidth=0.03](0,0)
\rput(-0.25,-0.15){$v$}
\psdot[linewidth=0.03](-1,0)
\rput(-1.25,-0.15){$w$}
\psdot[linewidth=0.03](1,0)
\rput(0.75,-0.15){$x$}
\psdot[linewidth=0.03](-0.5,0.866)
\rput(-0.75,0.7){$y$}
\psdot[linewidth=0.03](0.5,0.866)
\rput(0.25,0.716){$z$}

\psdot[linewidth=0.03](-2,0)
\rput(-2.2,-0.15){$a$}

\psdot[linewidth=0.03](-1.2,1.4)
\rput(-1.4,1.55){$b$}
\psdot[linewidth=0.03](0.2,1.4)
\rput(0.38,1.55){$s$}
\end{pspicture*}
\caption{Illustration of Proposition~\ref{proposition:bigone}, Claim~\ref{claim:trapeze}.}\label{picture:trapeze2}
\end{figure}
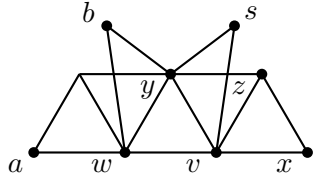
\end{claimproof}

\medskip

We now find a new contradiction. This will show that our hypotheses were wrong since the beginning, i.e.\ that $\Aut(\Delta)^+$ must be transitive on panels of each type.

Fix a vertex $v$ in $\Delta$ and consider a non-trivial $\hh 1$-collineation $\alpha$ of $\HH 2(v)$ given by Proposition~\ref{proposition:h1col}, say with axis $\ell$ and center $P$. We choose a vertex $w$ adjacent to $v$ and $\pipi 1(P)$ but different from $\pipi 1(\ell)$ and a vertex $x$ adjacent to $w$ and $v$ but different from $\pipi 1(P)$, as shown in Figure~\ref{picture:finalabsurd}. The $\hh 1$-collineation $\alpha$ induces a non-trivial elation of $\HH 1(x)$ with axis $v$ and center $w$. By Claim~\ref{claim:root}, this implies that the two red edges adjacent to $x$ (given by Claim~\ref{claim:only2}) are incident to $w$ and $v$ in $\HH 1(x)$. Hence, we conclude via Claim~\ref{claim:trapeze} that $[v,w]$ is also red. However, this reasoning could be done for any choice of $w$. So if $w'$ is another vertex adjacent to $v$ and $\pipi 1(P)$ but different from $\pipi 1(\ell)$, then we also get that $[v,w']$ is red. This gives a contradiction with Claim~\ref{claim:only2}.
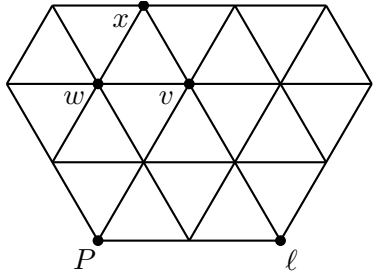
\begin{figure}[h!]
\centering
\begin{pspicture*}(-2.5,-2.5)(2.5,1.1)
\psset{unit=1.2cm}

\psline(-2,0)(2,0)
\psline(-1.5,-0.866)(1.5,-0.866)
\psline(-1,-1.732)(1,-1.732)
\psline(-1.5,0.866)(1.5,0.866)

\psline(-2,0)(-1,-1.732)
\psline(-1.5,0.866)(0,-1.732)
\psline(-0.5,0.866)(1,-1.732)
\psline(0.5,0.866)(1.5,-0.866)
\psline(1.5,0.866)(2,0)

\psline(2,0)(1,-1.732)
\psline(1.5,0.866)(0,-1.732)
\psline(0.5,0.866)(-1,-1.732)
\psline(-0.5,0.866)(-1.5,-0.866)
\psline(-1.5,0.866)(-2,0)

\psdot[linewidth=0.03](0,0)
\rput(-0.25,-0.15){$v$}
\psdot[linewidth=0.03](-1,0)
\rput(-1.26,-0.15){$w$}
\psdot[linewidth=0.03](-0.5,0.866)
\rput(-0.75,0.7){$x$}

\psdot[linewidth=0.03](-1,-1.732)
\rput(-1.15,-1.94){$P$}
\psdot[linewidth=0.03](1,-1.732)
\rput(1.12,-1.94){$\ell$}
\end{pspicture*}
\caption{Illustration of Proposition~\ref{proposition:bigone}.}\label{picture:finalabsurd}
\end{figure}
\end{proof}

Theorem~\ref{maintheorem:B'} now follows immediately.

\begin{proof}[Proof of Theorem~\ref{maintheorem:B'}]
See Proposition~\ref{proposition:bigone} and Theorem~\ref{maintheorem:A'}.
\end{proof}

\section{A sufficient condition for exoticity}

In this section we prove Theorem~\ref{maintheorem:exotic'}, which gives a sufficient condition under which an $\tilde{A}_2$-building is not $2$-Moufang (and in particular exotic).

\begin{primetheorem}{maintheorem:exotic}\label{maintheorem:exotic'}
Let $\Delta$ be a locally finite thick $\tilde{A}_2$-building, let $x_0$ and $x_1$ be two adjacent vertices in $\Delta$ and let $C$ be the set of chambers adjacent to both $x_0$ and $x_1$. For each $j \in \{0,1\}$, let $G_j \leq \Sym(C)$ be the image of $\Aut(\HH 1(x_j))(x_{1-j})$ in $\Sym(C)$. If $G_0 \neq G_1$, then $\Delta$ is not $2$-Moufang.
\end{primetheorem}

\begin{proof}
Say that $\Delta$ has thickness $q+1$, i.e.\ $|C|\ = q+1$. Then $\HH 1(x_0)$ and $\HH 1(x_1)$ are projective planes of order $q$. If one of them is non-Desarguesian then $\Delta$ is not $1$-Moufang (in particular not $2$-Moufang), so we can assume that $q$ is a prime power and that they are both Desarguesian. The full automorphism group of the Desarguesian projective plane of order~$q$ is $\mathrm{P\Gamma L}(3,q)$, and the stabilizer of a line acts on the set of points incident to it as $\mathrm{P\Gamma L}(2,q)$ acting on the projective line over $\mathbf{F}_q$. So for each $t \in \{0,1\}$, the image $G_t \leq \Sym(C)$ of $\Aut(\HH 1(x_j))(x_{1-j})$ in $\Sym(C)$ is conjugate to $\mathrm{P\Gamma L}(2,q)$ in $\Sym(C)$. Let us now suppose for a contradiction that $\Delta$ is $2$-Moufang. 

The subgroup $\mathcal{G}_0$ of $\Aut(\HH 1(x_0))$ generated by all elations of $\HH 1(x_0)$ (i.e.\ the \textit{little projective group} of $\HH 1(x_0)$) is isomorphic to $\mathrm{PSL}(3,q)$. The image $G'_0 \leq \Sym(C)$ of $\mathcal{G}_0(x_{1})$ in $\Sym(C)$ is thus conjugate to $\mathrm{PGL}(2,q)$ (acting on the projective line over $\mathbf{F}_q$) in $\Sym(C)$. Now the fact that $\HH 2(x_0)$ is Moufang implies that each elation of $\HH 1(x_0)$ is the restriction of an elation of $\HH 2(x_0)$. We thus deduce that the image of $\Aut(\HH 2(x_0))(x_1)$ in $\Sym(C)$ contains $G'_0$, while being contained in $G_0$ and $G_1$. But $G_0 \cong \mathrm{P\Gamma L}(2,q)$ has only one subgroup that is conjugate to $\mathrm{PGL}(2,q)$ in $\Sym(C)$, and it is the normalizer of that subgroup, so $G_0 = N_{\Sym(C)}(G'_0)$. The same is true for $G_1$, so $G_1 = N_{\Sym(C)}(G'_0) = G_0$. This contradicts the hypothesis.
\end{proof}


\section{Singer cyclic lattices}\label{section:Singer}

Let us now focus on Singer cyclic lattices, i.e.\ groups $\Gamma \leq \Aut(\Delta)$ acting simply transitively on the panels of each type of an $\tilde{A}_2$-building $\Delta$ and with the additional property that vertex-stabilizers are cyclic. These lattices have been deeply studied by Essert and Witzel in~\cite{Essert} and~\cite{Witzel}. The notion of a \textit{difference matrix} was defined in the latter reference. For our purpose, we present another way of understanding the relation between difference matrices and Singer cyclic lattices.

A \textbf{difference set} with parameter $q$ is a subset $D = \{d_1, \ldots, d_{q+1}\}$ of $\Z/(q^2+q+1)\Z$ such that, for each $x \in \Z/(q^2+q+1)\Z$ with $x \neq 0$, there exists a unique ordered pair $(d,d') \in D^2$ satisfying $x = d-d'$. Given such a difference set $D$ with parameter $q$, we can construct a projective plane $\Pi_D$ of order $q$ as follows. The point set $P$ and line set $L$ of $\Pi_D$ are simply $P = L = \Z/(q^2+q+1)\Z$, and the incidence relation $R \subseteq L \times P$ is given by
$$R = \{(x,x+d) \mid x \in L, d \in D\}.$$
It is an easy task to check that this defines a projective plane of order $q$.

Define a \textbf{difference vector} with parameter $q$ as a vertical vector $v = (d_1, \ldots, d_{q+1})^T$ where $\{d_1, \ldots, d_{q+1}\}$ is a difference set. To such a difference vector $v$, we associate a \textit{labelled projective plane}. A \textbf{labelled projective plane} of order $q$ is a projective plane whose flags (i.e.\ incident point-line pairs) are labelled by $\{1,\ldots,q+1\}$, i.e.\ with a map $\ell \colon R \to \{1,\ldots,q+1\}$. Given a difference vector $v$, we take the projective plane $\Pi_D$ associated to the difference set $D$ inherent to $v$, and we label its flags by defining $\ell(x,x+d_j) = j$ for each $x \in L$ and each $j \in \{1, \ldots, q+1\}$. Note that we need a difference vector (and not only a difference set) for this map to be well defined. We call $\Pi_v$ this labelled projective plane associated to $v$.

Now a \textbf{difference matrix} with parameter $q$ is a matrix with $q+1$ lines and $3$ columns, such that each of the three columns is a difference vector with parameter $q$. Let us write $M = (v_0, v_1, v_2)$ for such a matrix, where $v_0$, $v_1$ and $v_2$ are difference vectors. To a difference matrix $M$, we associate a \textbf{labelled $\tilde{A}_2$-building}, i.e.\ an $\tilde{A}_2$-building whose chambers are labelled by $\{1, \ldots, q+1\}$. Note that at each vertex of a labelled $\tilde{A}_2$-building, we see a labelled projective plane. (At a vertex of type $t \in \{0,1,2\}$, we consider vertices of type $t+1 \bmod 3$ as points and those of type $t+2 \bmod 3$ as lines). The labelled $\tilde{A}_2$-building $\Delta_M$ associated to the difference matrix $M = (v_0, v_1, v_2)$ is then defined as the unique one whose labelled projective plane at each vertex of type $t$ is $\Pi_{v_t}$ (for each $t \in \{0,1,2\}$). This building can be constructed recursively with the method of \cite{Ronan}: the labellings of the projective planes exactly tells us how two adjacent projective planes must be glued in the building. Moreover, we can define $\Gamma_M \leq \Aut(\Delta_M)$ as the group of all type-preserving automorphisms of $\Delta_M$ preserving the labellings. It is a direct fact that $\Gamma_M$ acts simply transitively on the panels of each type of $\Delta_M$ and that vertex stabilizers in $\Gamma_M$ are cyclic (of order $q^2+q+1$). So $\Gamma_M$ is a Singer cyclic lattice. Conversely, given a Singer cyclic lattice $\Gamma \leq \Aut(\Delta)$ we can label the chambers of $\Delta$ according to their orbit under the action of $\Gamma$ and get a (not necessarily unique) difference matrix $M$ such that $\Gamma = \Gamma_M$ and $\Delta = \Delta_M$. 

Two Singer cyclic lattices $\Gamma \leq \Aut(\Delta)$ and $\Gamma' \leq \Aut(\Delta')$ are \textbf{isomorphic} if there exists an isomorphism from $\Delta$ to $\Delta'$ conjugating $\Gamma$ to $\Gamma'$. This is actually equivalent to saying that $\Gamma$ and $\Gamma'$ are isomorphic as groups (see~\cite{Witzel}*{Proposition~3.7}). Two difference matrices $M$ and $M'$ are then said to be \textbf{equivalent} if $\Gamma_M \leq \Aut(\Delta_M)$ and $\Gamma_{M'} \leq \Aut(\Delta_{M'})$ are isomorphic. This equivalence relation on difference matrices was deeply studied in~\cite{Witzel}. In order to prove Corollary~\ref{maincorollary:bound} we do not need to really study the notion of equivalent difference matrices. We will however need the following basic results which can also be found in~\cite{Witzel}. A difference set $D$ (resp.\ difference vector $v$) is called \textbf{Desarguesian} if $\Pi_D$ (resp.\ $\Pi_v$) is Desarguesian. A difference matrix $M = (v_0, v_1, v_2)$ is called \textbf{Desarguesian} if $v_0$, $v_1$ and $v_2$ are Desarguesian. Note that there exist Desarguesian difference sets with parameter $q$ for each prime power $q$, see~\cite{Singer} or \cite{Witzel}*{Theorem~2.2}.

\begin{lemma}\label{lemma:equivalence}
Let $q = p^\eta$, with $p$ prime and $\eta \geq 1$.
\begin{enumerate}[(i)]
\item Let $M$ be a difference matrix with parameter $q$ and let $M'$ be a difference matrix obtained by permuting the $q+1$ lines of $M$. Then $M$ and $M'$ are equivalent.
\item Let $M = (v_0, v_1, v_2)$ be a difference matrix with parameter $q$, and let $g_0, g_1, g_2 \in \mathrm{AGL}(1,\Z/(q^2+q+1)\Z)$. Then $M$ is equivalent to $M' = (g_0(v_0), g_1(v_1), g_2(v_2))$, where $g_t$ acts on the difference vector $v_t$ componentwise.
\item Let $D$ be a Desarguesian difference set with parameter $q$. The stabilizer of $D$ in $\mathrm{AGL}(1,\Z/(q^2+q+1)\Z)$ has order $3\eta$.
\item Let $D$ be a Desarguesian difference set and $M$ be a Desarguesian difference matrix (both with parameter $q$). Then $M$ is equivalent to a difference matrix whose columns are equal to $D$ as a set.
\end{enumerate}
\end{lemma}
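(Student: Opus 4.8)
The plan is to treat the four assertions almost independently, deducing (iv) from (ii). Assertion (i) is immediate: permuting the $q+1$ rows of $M$ by some $\pi\in\Sym(\{1,\dots,q+1\})$ replaces each column $v_t$ by a column with the same underlying difference set, so it leaves $\Pi_{v_t}$ unchanged as a projective plane and only precomposes its flag-labelling with $\pi$. Hence $\Delta_M$ and $\Delta_{M'}$ are literally the same unlabelled building and their two chamber-labellings differ by the global permutation $\pi$; consequently $\Gamma_{M'}=\Gamma_M$ as subgroups of $\Aut(\Delta_M)$, so the two Singer cyclic lattices are isomorphic (see also \cite{Witzel}).

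\textbf{Assertion (ii).} The core observation is that for $g\in\mathrm{AGL}(1,\Z/(q^2+q+1)\Z)$, say $g\colon x\mapsto ax+b$, there is a \emph{label-preserving} collineation $\Pi_v\to\Pi_{g(v)}$: one checks directly that $x\mapsto ax+c$ on points together with $x\mapsto ax+c-b$ on lines is one, for any constant $c$. Moreover the label-preserving collineations of a fixed $\Pi_v$ form exactly the Singer cycle of $\Pi_v$ (a collineation fixing every flag-label satisfies $\alpha(x+d_j)=\alpha(x)+d_j$ for all $j$, hence is a translation since the $d_j-d_{j'}$ cover $\Z/(q^2+q+1)\Z$), so such an isomorphism is unique once prescribed on a single flag. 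I would then feed these local isomorphisms into the recursive construction of \cite{Ronan}: start from a base chamber of $\Delta_M$ and one of $\Delta_{M'}$ carrying the same label, use the freedom in $c$ to make the three local isomorphisms at the three vertices of the base chamber agree on that chamber and its panels, and extend vertex by vertex — at each new vertex the relevant local isomorphism is forced by the part of its residue already treated, and automatically matches it because everything identified so far is label-preserving. The resulting isomorphism $\Delta_M\to\Delta_{M'}$ preserves chamber-labels, hence conjugates $\Gamma_M$ onto $\Gamma_{M'}$, so $M$ and $M'$ are equivalent (this gluing is carried out in \cite{Witzel} as well).

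\textbf{Assertion (iii).} Identify the points of $\Pi_D$ with $\Z/(q^2+q+1)\Z$ so that the translation subgroup of $\mathrm{AGL}(1,\Z/(q^2+q+1)\Z)$ is the Singer cycle $S$ of $\Pi_D$ and $D$ is the line ``$0$''. Every $g\colon x\mapsto ax+b$ normalizes $S$, and $g$ stabilizes $D$ (i.e.\ $aD+b=D$) precisely when it extends to a collineation of $\Pi_D$ fixing the line $0$: with $aD+b=D$ the formula sends the line $x$, namely the point set $x+D$, to the line $ax$; conversely a collineation of $\Pi_D$ normalizing $S$ conjugates the regular cyclic action on points to itself and is therefore affine on $\Z/(q^2+q+1)\Z$. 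Thus $\mathrm{Stab}_{\mathrm{AGL}(1,\Z/(q^2+q+1)\Z)}(D)$ is the group of collineations of $\Pi_D\cong\mathrm{PG}(2,q)$ normalizing $S$ and fixing the line $0$. Since $\Pi_D$ is Desarguesian, the classical normalizer-of-a-Singer-cycle computation gives $N_{\Aut(\Pi_D)}(S)/S\cong\mathrm{Gal}(\F_{q^3}/\F_p)$, of order $3\eta$; and because $S$ is regular on the $q^2+q+1$ lines, each coset of $S$ in $N_{\Aut(\Pi_D)}(S)$ contains exactly one collineation fixing the line $0$. Hence the stabilizer has order $[N_{\Aut(\Pi_D)}(S):S]=3\eta$.

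\textbf{Assertion (iv) and the main obstacle.} First, any two Desarguesian difference sets $D,D'$ with parameter $q$ lie in the same $\mathrm{AGL}(1,\Z/(q^2+q+1)\Z)$-orbit: $\Pi_D$ and $\Pi_{D'}$ are both isomorphic to $\mathrm{PG}(2,q)$, and since all Singer cycles of $\mathrm{PG}(2,q)$ are conjugate one may pick an isomorphism $\Pi_D\to\Pi_{D'}$ conjugating $S_D$ to $S_{D'}$; as in (iii) such an isomorphism is affine on the index set, and composing with a suitable translation yields $g\in\mathrm{AGL}(1,\Z/(q^2+q+1)\Z)$ with $g(D)=D'$. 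Now given a Desarguesian difference matrix $M=(v_0,v_1,v_2)$ with $v_t$ of difference set $D_t$, choose $g_t$ with $g_t(D_t)=D$; by (ii), $M$ is equivalent to $(g_0(v_0),g_1(v_1),g_2(v_2))$, each of whose columns has underlying set $g_t(D_t)=D$, as required. The only genuinely delicate points are the gluing in (ii) — verifying that the local label-preserving collineations assemble into a single isomorphism of buildings, which is exactly what the recursive structure of \cite{Ronan} is designed to permit — and the classical normalizer fact used in (iii); everything else is formal. I expect (ii) to be the main obstacle, since it is the ingredient on which (iv) and the counting behind Corollary~\ref{maincorollary:bound} ultimately rest.
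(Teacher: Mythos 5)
Your proof is correct and follows essentially the same route as the paper: part (i) by observing that permuting rows only permutes labels, part (ii) by exhibiting label-preserving isomorphisms $\Pi_{v_t}\to\Pi_{g_t(v_t)}$ and invoking the uniqueness of the labelled building with prescribed labelled links, and part (iv) from (ii) together with transitivity of $\mathrm{AGL}(1,\Z/(q^2+q+1)\Z)$ on Desarguesian difference sets. The only difference is that you supply full arguments for part (iii) and for that transitivity statement (via the normalizer of a Singer cycle in $\mathrm{P\Gamma L}(3,q)$), where the paper simply cites Berman and Witzel; your arguments for these are the standard ones and are correct.
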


\begin{proof}
\begin{enumerate}[(i)]
\item Permuting the lines of a difference matrix $M = (v_0, v_1, v_2)$ simply permutes the labels in the three labelled projective planes $\Pi_{v_0}$, $\Pi_{v_1}$ and $\Pi_{v_2}$ simultaneously. So the labelled $\tilde{A}_2$-buildings $\Delta_M$ and $\Delta_{M'}$ are equal, up to permuting the labels. In particular, $\Gamma_M \leq \Aut(\Delta_M)$ and $\Gamma_{M'} \leq \Aut(\Delta_{M'})$ are isomorphic.
\item When $g \in \mathrm{AGL}(1,\Z/(q^2+q+1)\Z)$ and $v$ is a difference vector with parameter $q$, the labelled projective planes $\Pi_v$ and $\Pi_{g(v)}$ are isomorphic. Replacing a column $v_t$ by $g_t(v_t)$ thus does not change the Singer cyclic lattice.
\item See~\cite{Berman} or~\cite{Witzel}*{Lemma~4.5}.
\item This follows from (ii) and the fact that $\mathrm{AGL}(1,\Z/(q^2+q+1)\Z)$ is transitive on the Desarguesian difference sets with parameter $q$, see~\cite{Berman}. \qedhere
\end{enumerate}
\end{proof}

We can now prove Corollary~\ref{maincorollary:bound'} below.

\begin{primecorollary}{maincorollary:bound}\label{maincorollary:bound'}
For each $q \geq 2$, there are at most $\left(\frac{q(q^2-1)}{3}\right)^2$ isomorphism classes of Singer cyclic lattices $\Gamma \leq \Aut(\Delta)$ with parameter $q$ such that $\Delta$ is $2$-Moufang.
\end{primecorollary}

\begin{proof}
If $q$ is not a prime power then the claim is obvious: an $\tilde{A}_2$-building with thickness $q+1$ is never $1$-Moufang when $q$ is not a prime. We now assume that $q = p^\eta$ and fix some Desarguesian difference set $D = \{d_1, \ldots, d_{q+1}\}$ with parameter $q$. We need an upper bound on the number of equivalence classes of difference matrices $M$ with parameter $q$ such that $\Delta_M$ is $2$-Moufang. Let $M$ be such a difference matrix, in particular $M$ is Desarguesian. Up to replacing $M$ by an equivalent matrix, we can assume that each column of $M$ is equal to $D$ as a set (by Lemma~\ref{lemma:equivalence} (iv)). Moreover, up to permuting the lines of $M$ (see Lemma~\ref{lemma:equivalence} (i)), we can assume that the first column of $M$ is exactly $(d_1, \ldots, d_{q+1})^T$. So we look at matrices in
$$\mathcal{M} = \left\{M = \left(\begin{matrix}
d_1 & d_{\alpha_1(1)} & d_{\alpha_2(1)}\\
d_2 & d_{\alpha_1(2)} & d_{\alpha_2(2)}\\
\vdots & \vdots & \vdots \\
d_{q+1} & d_{\alpha_1(q+1)} & d_{\alpha_2(q+1)}\\
\end{matrix}\right) \suchthat
\begin{array}{c}
\alpha_1, \alpha_2 \in \Sym(q+1),\\
\Delta_M \text{ is $2$-Moufang}
\end{array}\right\}.$$
Let $M \in \mathcal{M}$ and write $M = (v_0, v_1, v_2)$. In the Desarguesian projective plane $\Pi_{v_t}$, a point is incident to $q+1$ lines, and the $q+1$ flags they form have $q+1$ different labels. The action of the point stabilizer on these $q+1$ flags thus gives a subgroup $G_t$ of $\Sym(q+1)$ which is conjugate to $\mathrm{P\Gamma L}(2,q)$. This subgroup $G_t \leq \Sym(q+1)$ does not depend on the chosen point because the subgroup of $\Aut(\Pi_{v_t})$ preserving the labels is transitive on points. The duality of $\Pi_{v_t}$ defined by $x \in P \mapsto -x \in L$, $x \in L \mapsto -x \in P$ also preserves the labels so the stabilizer of a line in $\Pi_{v_t}$ also gives birth to the same group $G_t \leq \Sym(q+1)$. We can moreover observe that $G_1 = \alpha_1^{-1} G_0 \alpha_1$ and $G_2 = \alpha_2^{-1} G_0 \alpha_2$, where $\alpha_1, \alpha_2 \in \Sym(q+1)$ behave as in the definition of $\mathcal{M}$. In $\Delta_M$, if $x_t$ and $x_{t'}$ are two adjacent vertices of type $t$ and $t'$ respectively, then the chambers adjacent to $x_t$ and $x_{t'}$ have the $q+1$ different labels, and Theorem~\ref{maintheorem:exotic'} exactly tells us that $\Delta_M$ is not $2$-Moufang when $G_t \neq G_{t'}$. Here we suppose that $\Delta_M$ is $2$-Moufang, so we deduce that $G_0 = G_1 = G_2$. As $\mathrm{P\Gamma L}(2,q)$ is its own normalizer in $\Sym(q+1)$, we obtain that $\alpha_1, \alpha_2 \in G_0$. In particular, we have $|\mathcal{M}|\ \leq |\mathrm{P\Gamma L}(2,q)|^2\ = (q(q^2-1) \eta)^2$. But Lemma~\ref{lemma:equivalence} (ii),(iii) implies that each matrix in $\mathcal{M}$ is equivalent to at least $(3 \eta)^2$ matrices in $\mathcal{M}$, so $|\mathcal{M}/\sim\!\!|\ \leq \left(\frac{q(q^2-1)}{3}\right)^2$ (where $\sim$ is the equivalence relation). This concludes the proof.
\end{proof}


\begin{proof}[Proof of Corollary~\ref{maincorollary:limit}]
By \cite{Witzel}*{Theorem~B}, the number of isomorphism classes of Singer cyclic lattices with parameter $q = p^\eta$ is bounded below by $A(q) = \frac{1}{162\eta^3}((q+1)!)^2$. Moreover, by Corollary~\ref{maincorollary:bound} at most $B(q) = \left(\frac{q(q^2-1)}{3}\right)^2$ of them are non-exotic. The conclusion follows from the fact that $\frac{B(q)}{A(q)} \to 0$ when $q$ goes to infinity.
\end{proof}


\begin{bibdiv}
\begin{biblist}

\bib{BCL}{article}{
author = {Bader, Uri},
author = {Caprace, Pierre-Emmanuel},
author = {Lécureux, Jean},
title = {On the linearity of lattices in affine buildings and ergodicity of the singular Cartan flow},
note = {Preprint: \href{http://arxiv.org/abs/1608.06265}{arXiv:1608.06265}},
year = {2016}
}

\bib{Barre}{article}{
author = {Barré, Sylvain},
title = {Immeubles de Tits triangulaires exotiques},
journal = {Ann. Fac. Sci. Toulouse Math. (6)},
volume = {9},
number = {4},
year = {2000},
pages = {575--603}
}

\bib{Barre-Pichot}{article}{
author = {Barré, Sylvain},
author = {Pichot, Mikaël},
title = {Sur les immeubles triangulaires et leurs automorphismes},
journal = {Geom. Dedicata},
volume = {130},
year = {2007},
pages = {71--91}
}

\bib{Berman}{article}{
author = {Berman, Gerald},
title = {Finite projective plane geometries and difference sets},
journal = {Trans. Amer. Math. Soc.},
volume = {74},
year = {1953},
pages = {492--499}
}

\bib{CapraceSixteen}{article}{
author = {Caprace, Pierre-Emmanuel},
title = {A sixteen-relator presentation of an infinite hyperbolic {K}azhdan group},
note = {Preprint: \href{http://arxiv.org/abs/1708.09772}{arXiv:1708.09772}},
year = {2017}
}

\bib{CaCi}{article}{
author = {Caprace, Pierre-Emmanuel},
author = {Ciobotaru, Corina},
title = {Gelfand pairs and strong transitivity for Euclidean buildings},
journal = {Ergodic Theory Dynam. Systems},
volume = {35},
number = {4},
year = {2015},
pages = {1056--1078}
}

\bib{Caprace-Monod2}{article}{
author = {Caprace, Pierre-Emmanuel},
author = {Monod, Nicolas},
title = {Fixed points and amenability in non-positive curvature},
journal = {Math. Ann.},
volume = {356},
number = {4},
year = {2013},
pages = {1303--1337}
}

\bib{Caprace-Monod}{article}{
author = {Caprace, Pierre-Emmanuel},
author = {Monod, Nicolas},
title = {An indiscrete Bieberbach theorem: from amenable CAT(0) groups to Tits buildings},
journal = {J. \'Ec. polytech. Math.},
volume = {2},
year = {2015},
pages = {333--383}
}

\bib{CMSZ}{article}{
author = {Cartwright, Donald I.},
author = {Mantero, Anna M.},
author = {Steger, Tim},
author = {Zappa, Anna},
title = {Groups acting simply transitively on the vertices of a building of type $\tilde{A}_2$, I},
journal = {Geom. Dedicata},
volume = {47},
year = {1993},
pages = {143--166}
}

\bib{CMSZ2}{article}{
author = {Cartwright, Donald I.},
author = {Mantero, Anna M},
author = {Steger, Tim},
author = {Zappa, Anna},
title = {Groups acting simply transitively on the vertices of a building of type $\tilde{A}_2$, II},
journal = {Geom. Dedicata},
volume = {47},
year = {1993},
pages = {167--223}
}

\bib{Essert}{article}{
author = {Essert, Jan},
title = {A geometric construction of panel-regular lattices in buildings of types $\tilde{A}_2$ and $\tilde{C}_2$},
journal = {Algebr. Geom. Topol.},
volume = {13},
year = {2013},
pages = {1531--1578}
}

\bib{Hughes-Piper}{book}{
author = {Hughes, Daniel R.},
author = {Piper, Fred C.},
title = {Projective planes},
series = {Grad. Texts in Math.},
volume = {6},
publisher = {Springer-Verlag, New-York},
year = {1973}
}

\bib{Kantor}{article}{
author = {Kantor, William M.},
title = {Primitive Permutation Groups of Odd Degree, and an Application to Finite Projective Planes},
journal = {J. Algebra},
volume = {106},
year = {1987},
pages = {15--45}
}

\bib{Kleiner-Leeb}{article}{
author = {Kleiner, Bruce and Leeb, Bernhard},
title = {Rigidity of quasi-isometries for symmetric spaces and {E}uclidean buildings},
journal = {Inst. Hautes \'Etudes Sci. Publ. Math.},
volume = {86},
year = {1997},
pages = {115--197}
}

\bib{Margulis}{book}{
author = {Margulis, Gregori A.},
title = {Discrete subgroups of semisimple {L}ie groups},
series = {Ergeb. der Math. (3)},
volume = {17},
publisher = {Springer-Verlag, Berlin},
year = {1991}
}

\bib{Radu}{article}{
author = {Radu, Nicolas},
title = {A topological characterization of the Moufang property for compact polygons},
journal = {Trans. Amer. Math. Soc.},
volume = {369},
number = {3},
year = {2017},
pages = {2151--2191}
}

\bib{Ronan}{article}{
author = {Ronan, Mark A.},
title = {A construction of Buildings with no rank~$3$ residues of spherical type},
book={
	title={Buildings and the Geometry of Diagrams (Proc. C.I.M.E. Session, Como 1984; L. A. Rosati ed.)},
	series = {Lecture Notes in Math.},
	publisher = {Springer-Verlag},
	volume = {1181}
},
year={1986},
pages={242--248}
}

\bib{Singer}{article}{
author = {Singer, James},
title = {A theorem in finite projective geometry and some applications to number theory},
journal = {Trans. Amer. Math. Soc.},
number = {43},
year = {1938},
pages = {377--385}
}

\bib{Tits}{book}{
author = {Tits, Jacques},
title = {Buildings of spherical type and finite BN-pairs},
publisher = {Springer-Verlag},
year = {1974},
volume = {386},
series = {Lecture Notes in Math.},
place = {Berlin}
}

\bib{HVM}{article}{
author = {Van Maldeghem, Hendrik},
title = {Non-classical triangle buildings},
journal = {Geom. Dedicata},
volume = {24},
year = {1987},
pages = {123--206}
}

\bib{HVM2}{article}{
author = {Van Maldeghem, Hendrik},
title = {Automorphisms of nonclassical triangle buildings},
journal = {Bull. Soc. Math. Belg. S\'er. B },
volume = {42},
year = {1990},
pages = {201--237}
}

\bib{VMVS}{article}{
author = {Van Maldeghem, Hendrik},
author = {Van Steen, Kristel},
title = {Characterizations by Automorphism Groups of Some Rank 3 Buildings - I. Some Properties of Half Strongly-Transitive Triangle Buildings},
journal = {Geom. Dedicata},
volume = {73},
year = {1998},
pages = {119--142}
}

\bib{VMVS2}{article}{
author = {Van Maldeghem, Hendrik},
author = {Van Steen, Kristel},
title = {Characterizations by Automorphism Groups of Some Rank 3 Buildings - {II}. {A} Half Strongly-Transitive Locally Finite Triangle Building is a {B}ruhat--{T}its Building},
journal = {Geom. Dedicata},
volume = {74},
number = {2},
year = {1999},
pages = {113--133}
}

\bib{WeissAffine}{book}{
author = {Weiss, Richard M.},
title = {The structure of affine buildings},
series = {Ann. of Math. Stud.},
volume = {168},
publisher = {Princeton University Press, Princeton},
year = {2008}
}

\bib{Witzel}{article}{
author = {Witzel, Stefan},
title = {On panel-regular $\tilde{A}_2$-lattices},
journal = {Geom. Dedicata},
volume = {191},
number = {1},
year = {2017},
pages = {85--135}
}

\end{biblist}
\end{bibdiv}
 
\end{document}